\newtheorem{corollary}{Corollary}[section]
\newtheorem{lemma}[corollary]{Lemma}
\newtheorem{proposition}[corollary]{Proposition}
\newtheorem{theorem}[corollary]{Theorem}
\theoremstyle{definition}
\newtheorem{definition}[corollary]{Definition}
\newtheorem{remark}[corollary]{Remark}
\newtheorem*{acknowledgements}{\sc Acknowledgements}
\numberwithin{equation}{section}
\newcommand{\norm}[1]{\lVert #1 \rVert}
\newcommand{\spr}[2]{\langle #1, #2\rangle}
\def\Xint#1{\mathchoice
{\XXint\displaystyle\textstyle{#1}}%
{\XXint\textstyle\scriptstyle{#1}}%
{\XXint\scriptstyle\scriptscriptstyle{#1}}%
{\XXint\scriptscriptstyle\scriptscriptstyle{#1}}%
\!\int}
\def\XXint#1#2#3{{\setbox0=\hbox{$#1{#2#3}{\int}$ }
\vcenter{\hbox{$#2#3$ }}\kern-.6\wd0}}
\def\dashint{\Xint-}
\def\mint{\Xint{\rotatebox[origin][30]{$-$}}}
\DeclareMathAlphabet{\mdutchcal}{U}{dutchcal}{m}{n}
\def \div {\mathop {\rm div}\nolimits}
\def \de {\mathrm{d}}
\def \e {\mathrm{e}}
\def \R {\mathbb R}
\def \B {\mathbb B}
\def \O {\Omega }
\def \D {\mathbb D}
\def \A {\mathbb A}
\def \N {\mathbb N}
\def \E {\mathcal{E}}
\def \OG {\Omega\setminus\Gamma}
\def \V {\mathcal{V}}
\def \D {\mathcal{D}^D}
\def \En {\mathscr{E}}
\def \Dis {\mathscr{D}}
\def \W {\mathscr{W}}
\def \Li {\mathcal{L}\mdutchcal{ip}^D(0,T)}
\def \L {\mathcal{L}\mdutchcal{ip}^D_{0,T}(0,T)}
\def \LT {\mathcal{L}\mdutchcal{ip}^D_{T}(0,T)}
\def \tul {\tilde{u}}
\def \dtul {\dot{\tilde{u}}}
\def \bul {\bar{u}}
\def \dbul {\dot{\bar{u}}}
\def \vs {\textbf{v}}
\begin{document}
\title{A dynamic model for viscoelasticity in domains with time-dependent cracks}
\author[F. Sapio]{Francesco Sapio}
 \address[Francesco Sapio]{SISSA, via Bonomea 265, 34136 Trieste, Italy}
 \email{fsapio@sissa.it}
%\thanks{Preprint SISSA 14/2020/MATE}

\begin{abstract}
In this paper, we prove the existence of solutions for a class of viscoelastic dynamic systems on time-dependent cracking domains.
\end{abstract}

\maketitle

\noindent
{\bf Keywords}: linear hyperbolic systems, dynamic fracture mechanics, cracking domains, elastodynamics, viscoelasticity.

\medskip

\noindent
{\bf MSC 2010}: 35L53, 35A01, 35Q74, 74H20, 74R10, 74D05.
	
\section{Introduction}
In this paper we study the dynamic crack growth in viscoelastic materials with long memory. When no crack is present, important contributions in the theory of linear viscoelasticity are due to such scientists as Maxwell, Kelvin, and Voigt. Their names are associated with two well-known models of dissipative solids which can be described in terms of a spring and a dash-pot in series (Maxwell's model) or in parallel (Kelvin-Voigt's model), see \cite{SL}. Boltzmann was the first to develop a three-dimensional theory of isotropic viscoelasticity in \cite{Bo}, and later Volterra in \cite{Vo} obtained similar results for anisotropic solids. 

In literature we can find two different classes of materials in the case of viscoelastic deformations: materials with short memory and materials with long memory. The first case is associated to a local model, which means that the state of stress at the instant $t$ only depends on the strain at that instant. In the second case, instead, the associated model is non-local in time, in the sense that the state of stress at the instant $t$ depends also on the past history up to time $t$ of the strain. According to \cite{Fab, MF-AM}, in the case of viscoelastic materials with long memory the general stress-strain relation is the following
\begin{equation*}
    \sigma(t,x):=G(0,x)\nabla u(t,x)+\int_{-\infty}^t G'(t-\tau,x)\nabla u(\tau,x)\de\tau,\qquad t\in(-\infty,T],\hspace{0.1cm}x\in \Omega,
\end{equation*}
for a suitable choice of the memory kernel $G$, and with some prescribed boundary conditions. 

To describe our model we start with a short description of the standard approach to dynamic fracture in the case of linearly elastic materials with no viscosity. In this situation, the deformation of the elastic part of the material evolves according to elastodynamics; for an antiplane displacement, elastodynamics together with the stress-strain relation $\sigma(t,x)=\nabla u(t,x)$, leads to the following wave equation
\begin{equation}\label{wave_eq}
    \ddot u(t,x)-\div \sigma(t,x)=f(t,x),\qquad t\in[0,T],\hspace{0.1cm}x\in\OG_t,
\end{equation}
 with some prescribed boundary and initial conditions. Here, $\Omega\subset\R^d$ is a bounded open set, which represents the cross-section of the body in the reference configuration, $\Gamma_t\subset\overline\Omega$ models the cross-section of the crack at time $t$, $u(t,\cdot)\colon \OG_t\to\R$ is the antiplane displacement, and $f(t,\cdot):\OG_t\rightarrow\R$ is a forcing term. From the mathematical point of view, a first step towards the study of the evolution of fractures is to solve the wave equation~\eqref{wave_eq} when the time evolution of the crack is assigned, see for example~\cite{C2,DM-Lar,DM-Luc,NS}.

In this paper, we consider Maxwell's model in the case of dynamic fracture, when the crack evolution $t\mapsto\Gamma_t$ is prescribed. In this case, the memory kernel $G$ has an exponential form (see for example \cite{SL}), and the displacement satisfies the following equation
\begin{equation}\label{viscoelastic*}
   \ddot u(t,x)-(c_1+c_2)\Delta u(t,x)+c_2\int_{-\infty}^t\e^{-(t-\tau)}\Delta u(\tau,x)\de \tau=f(t,x),\qquad t\in(-\infty,T],\hspace{0.1cm}x\in\OG_t,
\end{equation}
where $c_1$ and $c_2$ are two positive constants. As in \cite{DA2,Fab}, we suppose that the past history of the displacement up to time $0$ is already known, therefore, it is convenient to write equation \eqref{viscoelastic*} as 
\begin{align}\label{viscoelasticM}
   \ddot u(t,x)-(c_1+c_2)\Delta u(t,x)&+c_2\int_{0}^t\e^{-(t-\tau)}\Delta u(\tau,x)\de \tau\nonumber\\
   &=f(t,x)-c_2\int_{-\infty}^0\e^{-(t-\tau)}\Delta v(\tau,x)\de \tau,\qquad t\in[0,T],\hspace{0.1cm}x\in\OG_t,
\end{align}
where the function $v$ represents the past history, that is $v(t,x)=u(t,x)$ for every $t\in(-\infty,0]$ and $x\in\OG_t$.

The main results of this paper are Theorem~\ref{main1} and Theorem~\ref{main2}, in which we prove, by two different methods, the existence of a solution to~\eqref{viscoelasticM}. This is done not only in the antiplane case, but also in the more general case of linear elasticity in dimension $d$; that is, when the displacement is vector-valued and the elastic energy depends on the symmetrized gradient of the displacement. 

The first method, considered in Theorem~\ref{main1}, is based on a generalization of Lax-Milgram's Theorem (\cite[Chapter 3, Theorem 1.1]{LL}). We follow the lines of the proof of Theorem 2.1 in~\cite{DA}. In doing so, the main difficulty is given by the fact that the set $\OG_t$, where equation \eqref{viscoelasticM} holds, depends on time. This requires the introduction of suitable function spaces used to adapt the proof in~\cite{DA}.  

 The second method, provided by Theorem~\ref{main2}, is based on a time discretization scheme that yields a solution which, in addition, satisfies the energy-dissipation inequality~\eqref{eq:eninM}. This procedure, adopted in~\cite{DM-Lar} for wave equation \eqref{wave_eq} in a time-dependent domain, consists of the following steps: time discretization, construction of an approximate solution, discrete energy estimates, and passage to the limit.
 
The main difficulty in applying this procedure, in the same way it was done in~\cite{DM-Lar}, is the identification of the term in the energy-dissipation inequality which corresponds to the non–local in time viscous term $\int_0^t \e^{-(t-\tau)}\Delta u(\tau,x)\de \tau$ appearing in \eqref{viscoelasticM}.
  
To fix this issue, we introduce an auxiliary variable $w$ and we transform our equation \eqref{viscoelasticM} into an equivalent system (see Definition \ref{defsym}) of two equations in the two variables $u$ and $w$, without long memory terms, which has to be solved on the time-dependent domain $\OG_t$. The advantage of this strategy lies in the fact that we transform a non-local model (the equation) into a local one (the system). 

We discretize the time interval $[0,T]$ by using the time step  $\tau_n:=\frac{T}{n}$. To define the approximate solution $(u_n,w_n)$ at time $(k+1)\tau_n$, we solve an incremental problem (see \eqref{unkM}) depending on the values of $(u_n,w_n)$ at times $(k-1)\tau_n$ and $k\tau_n$. Since the new system has a natural notion of energy, we also obtain a discrete energy estimate for $(u_n,w_n)$. Then, we extend $(u_n,w_n)$ to the whole interval $[0,T]$ by a suitable interpolation, and by using the energy estimates together with a compactness result we pass to the limit, along a subsequence of $(u_n,w_n)$. It is now possible to prove that the limit of this subsequence of $(u_n,w_n)$ is a solution to the system, which is equivalent to our equation \eqref{viscoelasticM}. As a byproduct, from the discrete energy estimates we obtain the energy-dissipation inequality~\eqref{eq:eninM}.

The paper is organized as follows. In Section~\ref{visc_sec2} we fix the notation adopted throughout the paper. In Section~\ref{visc_sec3M} we list the standard assumptions on the family of cracks $\{\Gamma_t\}_{t\in[0,T]}$, we state the evolution problem in the general case, and we specify the notion of solution to the problem. In Section~\ref{sub1} and~\ref{sub2} we deal with the existence of a solution to the viscoelastic dynamic model; in particular in Section~\ref{sub1}, we provide a solution by means of a generalization of Lax-Milgram's theorem by Lions. After that, in Section~\ref{sub2}, as previously anticipated, we define a system equivalent to the equation. In particular, in Subsection~\ref{sub3} we implement the time discretization method on such a system, and we conclude with Subsection~\ref{sub4} by showing the validity of the energy-dissipation inequality, and of the initial conditions.

\section{Notation}\label{visc_sec2}
In this section we fix some notation that will be used throughout the paper. The space of $m\times d$ matrices with real entries is denoted by $\R^{m\times d}$; in case $m=d$, the subspace of symmetric matrices is denoted by $\mathbb R^{d\times d}_{sym}$. Given a function $u\colon\R^d\to\R^m$, we denote its Jacobian matrix by $\nabla u$, whose components are $(\nabla u)_{ij}:= \partial_j u_i$ for $i=1,\dots,m$ and $j=1,\dots,d$; when $u\colon \R^d\to\R^d$, we use $eu$ to denote the symmetric part of the gradient, namely $eu:=\frac{1}{2}(\nabla u+\nabla u^T)$. Given a tensor field $A\colon \R^d\to\R^{m\times d}$, by $\div A$ we mean its divergence with respect to rows, namely $(\div A)_i:= \sum_{j=1}^d\partial_jA_{ij}$ for $i=1,\dots,m$. 

We denote the $d$-dimensional Lebesgue measure by $\mathcal L^d$ and the $(d-1)$-dimensional Hausdorff measure by $\mathcal H^{d-1}$; given a bounded open set $\Omega$ with Lipschitz boundary, by $\nu$ we mean the outer unit normal vector to $\partial\Omega$, which is defined $\mathcal H^{d-1}$-a.e. on the boundary. The Lebesgue and Sobolev spaces on $\Omega$ are defined as usual; the boundary values of a Sobolev function are always intended in the sense of traces. 

The norm of a generic Banach space $X$ is denoted by $\|\cdot\|_X$; when $X$ is a Hilbert space, we use $(\cdot,\cdot)_X$ to denote its scalar product. We denote by $X'$ the dual of $X$ and by $\spr{\cdot}{\cdot}_{X'}$ the duality product between $X'$ and $X$. Given two Banach spaces $X_1$ and $X_2$, the space of linear and continuous maps from $X_1$ to $X_2$ is denoted by $\mathscr L(X_1;X_2)$; given $\mathbb A\in\mathscr L(X_1;X_2)$ and $u\in X_1$, we write $\mathbb A u\in X_2$ to denote the image of $u$ under $\mathbb A$. 

Moreover, given an open interval $(a,b)\subset\R$ and $p\in[1,\infty]$, we denote by $L^p(a,b;X)$ the space of $L^p$ functions from $(a,b)$ to $X$; we use $H^k(a,b;X)$ to denote the Sobolev space of functions from $(a,b)$ to $X$ with $k$ derivatives in $L^2(a,b;X)$. Given $u\in H^1(a,b;X)$, we denote by $\dot u\in L^2(a,b;X)$ its derivative in the sense of distributions. When dealing with an element $u\in H^{1}(a,b;X)$ we always assume $u$ to be the continuous representative of its class, and therefore, the pointwise value $u(t)$ of $u$ is well defined for every $t\in[a,b]$. We use $C_w^0([a,b];X)$ to denote the set of weakly continuous functions from $[a,b]$ to $X$, namely, the collection of maps $u\colon [a,b]\to X$ such that $t\mapsto \spr{x'}{u(t)}_{X'}$ is continuous from $[a,b]$ to $\R$, for every $x'\in X'$. We adopt the notation $Lip([a,b];X)$ to denote the space of Lipschitz functions from the interval $[a,b]$ into the Banach space $X$.

\section{Formulation of the evolution problem, notion of solution}\label{visc_sec3M}
Let $T$ be a positive real number and $d\in\mathbb{N}$. Let $\Omega\subset\R^d$ be a bounded open set (which represents the reference configuration of the body) with Lipschitz boundary. Let $\partial_D\Omega$ be a (possibly empty) Borel subset of $\partial\Omega$, on which we prescribe the Dirichlet condition, and let $\partial_N\Omega$ be its complement, on which we give the Neumann condition. Let $\Gamma\subset\overline\Omega$ be the prescribed crack path. We assume the following hypotheses on the geometry of the cracks:
\begin{itemize}
\item[(E1)] $\Gamma$ is a closed set with $\mathcal L^d(\Gamma)=0$ and $\mathcal H^{d-1}(\Gamma\cap\partial\Omega)=0$;
\item[(E2)] for every $x\in\Gamma$ there exists an open neighborhood $U$ of $x$ in $\R^d$ such that $(U\cap\Omega)\setminus\Gamma$ is the union of two disjoint open sets $U^+$ and $U^-$ with Lipschitz boundary;
\item[(E3)] $\{\Gamma_t\}_{t\in(-\infty,T]}$ is a family of closed subsets of $\Gamma$ satisfying $\Gamma_s\subset\Gamma_t$ for every $-\infty< s\le t\le T$.
\end{itemize}
Notice that the set $\Gamma_t$ represents the crack at time $t$. Thanks to (E1)--(E3) the space $L^2(\OG_t;\R^d)$ coincides with $L^2(\Omega;\R^d)$ for every $t\in(-\infty,T]$. In particular, we can extend a function $u\in L^2(\OG_t;\R^d)$ to a function in $L^2(\Omega;\R^d)$ by setting $u=0$ on $\Gamma_t$. Since $\mathcal H^{d-1}(\Gamma\cap\partial\Omega)=0$ the trace of $u\in H^1(\OG;\R^d)$ is well defined on $\partial\Omega$. Indeed, we may find a finite number of open sets with Lipschitz boundary $U_j\subset\OG$, $j=1,\dots k$, such that $\partial\Omega\setminus \Gamma\subset\cup_{j=1}^k\partial U_j$. There exists a positive constant $C$, depending only on $\Omega$ and $\Gamma$, such that
\begin{equation}\label{eq:vis_traceM}
\norm{u}_{L^2(\partial\Omega;\R^d)}\le C\norm{u}_{H^1(\OG;\R^d)}\quad\text{for every }u\in H^1(\OG;\R^d).
\end{equation}
Similarly, we can find a finite number of open sets $V_j\subset\OG$, $j=1,\dots l$, with Lipschitz boundary, such that $\OG=\cup_{j=1}^l V_j$. By using the second Korn's inequality in each $V_j$ (see, e.g.,~\cite[Theorem 2.4]{OSY}) and taking the sum over $j$ we can find a positive constant $C_K$, depending only on $\Omega$ and $\Gamma$, such that 
\begin{equation}\label{eq:hyp_kornM}
\norm{\nabla u}_{L^2(\Omega;\R^{d\times d})}^2\le C_K(\norm{u}_{L^2(\Omega;\R^d)}^2+\norm{e u}_{L^2(\Omega;\R^{d\times d}_{sym})}^2)\quad\text{for every }u\in H^1(\OG;\R^d).
\end{equation}

We set $H:=L^2(\Omega;\R^d)$, $H^d_s:=L^2(\Omega;\R^{d\times d}_{sym})$, $H^N:=L^2(\partial_N\Omega;\R^d)$ and $H^D:=L^2(\partial_D\Omega;\R^d)$; the symbols $(\cdot,\cdot)$ and $\|\cdot\|$ denote the scalar product and the norm in $H$ or in $H^d_s$, according to the context. Moreover, we define the following spaces
\begin{equation*}
V:=H^1(\OG;\R^d)\quad \text{ and }\quad  V_t:= H^1(\OG_t;\R^d)\quad \text{for every $t\in (-\infty,T]$}.
\end{equation*}
Notice that in the definition of $V_t$ and $V$, we are considering only the distributional gradient of $u$ in $\OG_t$ and in $\OG$, respectively, and not the one in $\Omega$. Taking into account \eqref{eq:hyp_kornM}, we shall use on the set $V_t$ (and also on the set $V$) the equivalent norm
\begin{equation*}
\norm{u}_{V_t}:=(\norm{u}^2+\norm{e u}^2)^{\frac{1}{2}}\quad\text{for every }u\in V_t.
\end{equation*}
Furthermore, by~\eqref{eq:vis_traceM}, we can consider for every $t\in(-\infty,T]$ the set
\begin{equation*}
V_t^D:=\{u\in V_t:u=0\text{ on }\partial_D\Omega\},
\end{equation*}
which is a closed subspace of $V_t$. 

We assume that the elasticity and viscosity tensors $\A$ and $\B$ satisfy the following assumptions:
\begin{align}
    &\A,\B\in L^\infty(\O;\mathscr L(\R^{d\times d}_{sym};\R^{d\times d}_{sym})),\label{linfinity}\\
    &\A(x)\eta_1\cdot\eta_2=\eta_1\cdot \A(x)\eta_2,&&\hspace{-0.4cm}\B(x)\eta_1\cdot\eta_2=\eta_1\cdot \B(x)\eta_2&&\text{for a.e. $x\in\O$ and for every $\eta_1,\eta_2\in\R_{sym}^{d\times d}$},\label{CB1.5}\\
    &\A(x)\eta\cdot\eta\geq C_{\A}|\eta|^2,&&\hspace{-0.4cm}\B(x)\eta\cdot\eta\geq C_{\B}|\eta|^2&&\text{for a.e. $x\in\O$ and for every $\eta\in\R_{sym}^{d\times d}$,}\label{CB2M}
\end{align}
for some positive constants $C_{\A}$ and $C_{\B}$ independent of $x$, and the dot denotes the Euclidean scalar product of matrices.

Let $\beta$ a positive real number. We wish to study the following viscoelastic dynamic system
\begin{equation}\label{classic_model_inf}
    \ddot u(t)-\div((\A +\B) eu(t))+ \int_{-\infty}^t\frac{1}{\beta}\e^{-\frac{t-\tau}{\beta}}\div(\B e u(\tau))\de \tau=f(t)\quad\text{in $\OG_t$, $t\in(-\infty,T)$},
\end{equation}
together with the boundary conditions
\begin{alignat}{4}
&u(t)=z(t) && \qquad \text{on $\partial_D\Omega$}, & \quad t\in(-\infty,T),\label{boundary1_inf}\\
&\Big[\left(\A +\B\right) eu(t)-\int_{-\infty}^t\frac{1}{\beta}\e^{-\frac{t-\tau}{\beta}} \B e u(\tau)\de \tau\Big]\nu=N(t)  &&\qquad\text{on $\partial_N\Omega$}, & \quad t\in(-\infty,T),\\
&\Big[\left(\A +\B\right) eu(t)-\int_{-\infty}^t\frac{1}{\beta}\e^{-\frac{t-\tau}{\beta}} \B e u(\tau)\de \tau\Big]\nu=0  &&\qquad\text{on $\Gamma_t$}, & \quad t\in(-\infty,T),\label{boundary_inf}
\end{alignat}
where the data satisfy 
\begin{itemize}
    \item [(D1)] $f\in L^2_{loc}((-\infty;T];H)$;
    \item [(D2)] $N\in L^2_{loc}((-\infty;T];H^N)$ such that $\dot N\in L^2_{loc}((-\infty;T];H^N)$;
    \item [(D3)] $z\in L^2_{loc}((-\infty;T];V)$ such that $\dot z\in L^2_{loc}((-\infty;T];V)$, $\ddot z\in L^2_{loc}((-\infty;T];H)$, and $z(t)\in V_t$ for every $t\in(-\infty,T]$.
\end{itemize}
Notice that in \eqref{classic_model_inf}--\eqref{boundary_inf} the explicit dependence on $x$ is omitted to enlighten notation.

As usual, the Neumann boundary conditions are only formal, and their meaning will be specified in Definition~\ref{defwefirst_inf}. To this aim, we define $\V_{loc}(-\infty,T)$ as the space of all function $u\in L^2_{loc}((-\infty,T];V)$ such that $\dot u\in L^2_{loc}((-\infty,T];H)$, $u(t)\in V_t$ for a.e. $t\in (-\infty,T)$, and 
\begin{equation}\label{peso}
    \int_{-\infty}^T\e^{\frac{t}{\beta}}\|eu(t)\|\de t< +\infty.
\end{equation}

Now we are in position to explain in which sense we mean that $u\in\V_{loc}(-\infty,T)$ is a solution to the viscoelastic dynamic system \eqref{classic_model_inf}--\eqref{boundary_inf}. Roughly speaking, we multiply \eqref{classic_model_inf} by a test function, we integrate by parts in time and in space, and taking into account \eqref{boundary1_inf}--\eqref{boundary_inf} we obtain the following definition.

\begin{definition}[Weak solution]\label{defwefirst_inf}
We say that $u\in\V_{loc}(-\infty,T)$ is a {\it weak solution} to system \eqref{classic_model_inf} with boundary conditions \eqref{boundary1_inf}--\eqref{boundary_inf} if $u(t)-z(t)\in V_t^D$ for a.e. $t\in (-\infty,T)$, and 
\begin{align*}
-\int_{-\infty}^T(\dot u(t),\dot v(t)) \de t+\int_{-\infty}^T((\A +\B) eu(t),e v(t)) \de t&-\int_{-\infty}^T\int_{-\infty}^t\frac{1}{\beta}\e^{-\frac{t-\tau}{\beta}}(\B eu(\tau),e v(t)) \de\tau\de t\nonumber\\
&=\int_{-\infty}^T(f(t),v(t)) \de t+\int_{-\infty}^T(N(t),v(t))_{H^N} \de t
\end{align*}
for every $v \in C^{\infty}_c(-\infty,T;V)$ such that $v(t)\in V^D_t$ for every $t\in(-\infty,T]$.
\end{definition}

Now, let us consider $a,b\in[0,T]$ such that $a<b$. We define the spaces
\begin{align*}
\V(a,b)&:=\{u\in L^2(a,b;V)\cap H^1(a,b;H): u(t)\in V_t\hspace{2pt}\text{ for a.e. $t\in(a,b)$}\},\\
\V^D(a,b)&:=\{v\in\V(a,b):v(t)\in V_t^D\hspace{2pt} \text{for a.e. $t\in(a,b)$}\},\\
\D(a,b)&:=\{v\in C^{\infty}_c(a,b;V):v(t)\in V_t^D\hspace{2pt}\text{for every $t\in [a,b]$}\},
\end{align*}
and we have the following lemma.
\begin{lemma}\label{spazi_dens}
The space $\V(a,b)$ is a Hilbert space with respect to the following norm
\begin{equation*}
\norm{\varphi}_{\V(a,b)}:=\left(\norm{\varphi}_{L^2(a,b;V)}^2+\norm{\dot \varphi}_{L^2(a,b;H)}^2\right)^{\frac{1}{2}},\quad\varphi\in\V(a,b).
\end{equation*} 
Moreover, $\V^D(a,b)$ is a closed subspace of $\V(a,b)$, and $\D(a,b)$ is a dense subset of the space of functions belonging to $\V^D(a,b)$ which vanish on $a$ and $b$.
\end{lemma}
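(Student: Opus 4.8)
The plan is to dispose of the two topological assertions by soft arguments and to concentrate on the density statement, where the difficulty is the tension between the ``for every $t$'' pointwise constraint defining $\D(a,b)$ and the monotone, time--dependent nature of the family $\{V_t\}$.

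\emph{Hilbert structure of $\V(a,b)$ and closedness of $\V^D(a,b)$.} Under $u\mapsto(u,\dot u)$ the space $\mathcal W:=L^2(a,b;V)\cap H^1(a,b;H)$ is identified with a closed subspace of $L^2(a,b;V)\times L^2(a,b;H)$ — closedness being the standard fact that $u_n\to u$ in $L^2(a,b;V)$ and $\dot u_n\to g$ in $L^2(a,b;H)$ force $g=\dot u$ — and, since $V\hookrightarrow H$, the norm in the statement is equivalent to the canonical intersection norm; hence $\mathcal W$ is a Hilbert space. The point is then that each $V_t$ is a \emph{closed} subspace of $V$: the $V$--norm and the $V_t$--norm agree on $V_t$ (the distributional symmetrized gradient on $\OG$ is the restriction to $\OG$ of the one on $\OG_t$, and $\mathcal L^d(\Gamma\setminus\Gamma_t)=0$), so $V_t=H^1(\OG_t;\R^d)$ — complete for the equivalent norm $\norm{\cdot}_{V_t}$ — is a complete, hence closed, subspace of $V$; likewise $V_t^D=V_t\cap\{v\in V:v=0\text{ on }\partial_D\Omega\}$ is closed in $V$, the second factor being closed thanks to the trace bound~\eqref{eq:vis_traceM}. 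Now if $u_n\to u$ in $\mathcal W$ with $u_n\in\V(a,b)$ (resp. $\V^D(a,b)$), a subsequence satisfies $u_n(t)\to u(t)$ in $V$ for a.e. $t$, and for a.e. such $t$ all the $u_n(t)$ lie in the closed set $V_t$ (resp. $V_t^D$), whence so does $u(t)$. This proves that $\V(a,b)$ is complete and that $\V^D(a,b)$ is closed in it.

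\emph{Density of $\D(a,b)$, step 1: cutoff at the endpoints.} Fix $u\in\V^D(a,b)$ with $u(a)=u(b)=0$ in $H$. Choose $\theta_\lambda\in C^\infty([a,b];[0,1])$ vanishing on $[a,a+\lambda]\cup[b-\lambda,b]$, equal to $1$ on $[a+2\lambda,b-2\lambda]$, with $|\dot\theta_\lambda|\le C/\lambda$. Then $\theta_\lambda u\in\V^D(a,b)$ (pointwise it remains in the subspace $V_t^D$), it vanishes near $a$ and $b$, and $\theta_\lambda u\to u$ in $\V(a,b)$ as $\lambda\to0$: the terms $\theta_\lambda u\to u$ in $L^2(a,b;V)$ and $\theta_\lambda\dot u\to\dot u$ in $L^2(a,b;H)$ follow by dominated convergence, while
\begin{equation*}
\norm{\dot\theta_\lambda u}_{L^2(a,b;H)}^2\le\frac{C^2}{\lambda^2}\Big(\int_a^{a+2\lambda}\norm{u(t)}^2\,\de t+\int_{b-2\lambda}^{b}\norm{u(t)}^2\,\de t\Big)\le 2C^2\Big(\int_a^{a+2\lambda}\norm{\dot u}^2\,\de t+\int_{b-2\lambda}^{b}\norm{\dot u}^2\,\de t\Big)\to 0,
\end{equation*}
where we used $\norm{u(t)}^2\le(t-a)\int_a^t\norm{\dot u(s)}^2\,\de s$ for $t$ near $a$ (from $u(a)=0$) and the symmetric estimate near $b$. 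Thus we may assume $u$ itself vanishes on $[a,a+\eta]\cup[b-\eta,b]$ for some $\eta>0$.

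\emph{Density of $\D(a,b)$, step 2: one-sided mollification, and the main obstacle.} The decisive feature of the geometry is that $t\mapsto V_t$ is \emph{nondecreasing}: for $s\le t$ one has $\Gamma_s\subseteq\Gamma_t$, hence $\OG_t\subseteq\OG_s$, hence $V_s\subseteq V_t$ and $V_s^D\subseteq V_t^D$. Accordingly I would mollify in time using only past values: extend $u$ by $0$ to $(-\infty,b]$, fix a mollifier $\rho_\delta\in C^\infty_c((0,\delta))$ of unit mass, and set $u^\delta(t):=\int_0^\delta\rho_\delta(s)\,u(t-s)\,\de s$ for $0<\delta<\eta$. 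Then $u^\delta\in C^\infty((a,b);V)$ with support compactly contained in $(a,b)$, $u^\delta\to u$ in $L^2(a,b;V)$, and $\dot u^\delta=\rho_\delta*\dot u\to\dot u$ in $L^2(a,b;H)$. Crucially, for \emph{every} $t\in(a,b)$ and a.e. $s\in(0,\delta)$ one has $u(t-s)\in V_{t-s}^D\subseteq V_t^D$ (as $t-s<t$), and since $V_t^D$ is a closed subspace of $V$ the Bochner integral $u^\delta(t)$ again lies in $V_t^D$; hence $u^\delta\in\D(a,b)$. A diagonal choice $u_k:=(\theta_{\lambda_k}u)^{\delta_k}$ with $\lambda_k\to0$ and $\delta_k\to0$ fast enough then produces the required approximating sequence. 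The one genuinely delicate point is exactly this last step: a two--sided mollification would break the membership $u^\delta(t)\in V_t$, since it averages in future values $u(s)$, $s>t$, which belong only to the strictly larger space $V_s$; the remedy of mollifying backwards is what forces the preliminary endpoint cutoff, and hence the use of the hypothesis $u(a)=u(b)=0$ together with the Hardy/Poincaré-type bound of step~1.
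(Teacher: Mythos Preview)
Your argument is correct. For the Hilbert structure of $\V(a,b)$ and the closedness of $\V^D(a,b)$ you follow essentially the same Cauchy--sequence--plus--pointwise--a.e.\ extraction that the paper uses; the paper is slightly more terse (it does not spell out why each $V_t$ is closed in $V$, which you do).

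For the density of $\D(a,b)$ the paper gives no argument at all and simply refers to~\cite[Lemma~2.8]{DMT2}. Your two--step proof (Hardy--type endpoint cutoff exploiting $u(a)=u(b)=0$, followed by backward--in--time mollification with $\rho_\delta\in C^\infty_c((0,\delta))$) is the standard route for this kind of monotone time--dependent constraint and is presumably the content of the cited lemma; in any case it is self--contained and correct. The key observation you isolate --- that only a one--sided mollifier preserves the inclusion $u(t-s)\in V_{t-s}^D\subset V_t^D$, and that this forces the preliminary cutoff and hence the use of the vanishing at $a$ and $b$ --- is exactly the point of the construction. One cosmetic remark: after the backward mollification the support of $u^\delta$ shifts to the right by at most $\delta$, so the set where $u^\delta$ vanishes near $b$ shrinks from $[b-\eta,b]$ to $[b-\eta+\delta,b]$; you implicitly use $\delta<\eta$ to keep compact support in $(a,b)$, which you state but might want to emphasize.
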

\begin{proof}
It is clear that $\norm{\cdot}_{\V(a,b)}$ is a norm induced by a scalar product on the set $\V(a,b)$. We just have to check the completeness of this space with respect to this norm. 
Let $\{\varphi_k\}_k\subset \V(a,b)$ be a Cauchy sequence. Then, $\{\varphi_k\}_k$ and $\{\dot \varphi_k\}_k$ are Cauchy sequences in $L^2(a,b;V)$ and $L^2(a,b;H)$, respectively, which are complete Hilbert spaces. Thus, there exists $\varphi\in L^2(a,b;V)$ with $\dot \varphi\in L^2(a,b;H)$ such that $\varphi_k\to \varphi$ in $L^2(a,b;V)$ and $\dot \varphi_k\to\dot \varphi$ in $L^2(a,b;H)$. In particular there exists a subsequence $\{\varphi_{k_j}\}_j$ such that $\varphi_{k_j}(t)\to \varphi(t)$ in $V$ for a.e. $t\in(a,b)$. Since $\varphi_{k_j}(t)\in V_t$ for a.e. $t\in(a,b)$ we deduce that $\varphi(t)\in V_t$ for a.e. $t\in(a,b)$. Hence $\varphi\in\V(a,b)$ and $\varphi_k\to \varphi$ in $\V(a,b)$. With a similar argument, we can prove that $\V^D(a,b)\subset \V(a,b)$ is a closed subspace. For the proof of the last statement we refer to~\cite[Lemma 2.8]{DMT2}.
\end{proof}

Now, suppose we know the past history of the system up to time $0$. In particular, let $u_p\in \V_{loc}(-\infty,0)$ be a weak solution to \eqref{classic_model_inf}--\eqref{boundary_inf} on the interval $(-\infty,0)$ in the sense of Definition \ref{defwefirst_inf}, in such a way that $0$ is a Lebesgue's point for both $u_p$ and $\dot u_p$. This implies that there exist $u^0\in V_0$, with $u^0-z(0)\in V_0^D$, and $u^1\in H$ such that
\begin{equation*}
    \lim_{h\to 0^+}\frac{1}{h}\int_{-h}^0\|u_p(t)-u^0\|^2_{V_0}\de t=0,\qquad \lim_{h\to 0^+}\frac{1}{h}\int_{-h}^0\|\dot u_p(t)-u^1\|^2\de t=0.
\end{equation*}
From this assumption, by defining
\begin{equation*}
    F_0(t):=\frac{1}{\beta}\e^{-\frac{t}{\beta}}\int_{-\infty}^0\e^{\frac{\tau}{\beta}}\B eu_p(\tau)\de \tau,
\end{equation*}
we can reformulate \eqref{classic_model_inf}--\eqref{boundary_inf} on the interval $[0,T]$ in the following way:
\begin{equation}\label{classic_model_inf2}
    \ddot u(t)-\div((\A +\B) eu(t))+ \int_{0}^t\frac{1}{\beta}\e^{-\frac{t-\tau}{\beta}}\div(\B e u(\tau))\de \tau=f(t)-\div F_0(t)\quad\text{in $\OG_t$, $t\in(0,T)$},
\end{equation}
with boundary and initial conditions
\begin{alignat}{4}
&u(t)=z(t) && \qquad \text{on $\partial_D\Omega$}, & \quad t\in(0,T),\label{boundary1_inf2}\\
&\Big[\left(\A +\B\right) eu(t)-\int_{0}^t\frac{1}{\beta}\e^{-\frac{t-\tau}{\beta}} \B e u(\tau)\de \tau\Big]\nu=N(t)+F_0(t)\nu  &&\qquad\text{on $\partial_N\Omega$}, & \quad t\in(0,T),\\
&\Big[\left(\A +\B\right) eu(t)-\int_{0}^t\frac{1}{\beta}\e^{-\frac{t-\tau}{\beta}} \B e u(\tau)\de \tau\Big]\nu=F_0(t)\nu  &&\qquad\text{on $\Gamma_t$}, & \quad t\in(0,T),\label{boundary_inf2}\\
&u(0)=u^0,\quad \dot u(0)=u^1.\label{sys_inf2}
\end{alignat}
% \begin{itemize}
%     \item $u\in\V(0,T)$, with $u-z\in \V^D(0,T)$ such that
%     \begin{align*}
% -\int_0^T(\dot u(t),\dot v(t)) \de t&+\int_0^T((\A +\B) eu(t),e v(t)) \de t-\int_0^T\int_0^t\frac{1}{\beta}\e^{-\frac{t-\tau}{\beta}}(\B eu(\tau),e v(t)) \de\tau\de t\nonumber\\
% &=\int_0^T(f(t),v(t)) \de t+\int_0^T(N(t),v(t))_{H^N} \de t+\int_0^T(F_0(t),ev(t)) \de t
% \end{align*} 
% for every $v \in \D(0,T)$;
% \item the initial conditions are taken in this sense
% \begin{equation}\label{ini*}
%     \lim_{t\to 0^+}\|u(t)-u^0\|=0,\quad\lim_{t\to 0^+}\|\dot u(t)-u^1\|_{(V^D_0)'}=0.
% \end{equation}
% \end{itemize}

Thanks to (D1)--(D3) and \eqref{peso} (on the interval $(-\infty,0]$), we have $f\in L^2(0,T;H)$, $F_0\in C^{\infty}([0,T];H^d_s)$, $N\in H^1(0,T;H^N)$, and $z\in H^2(0,T;H)\cap H^1(0,T;V)$ with $z(t)\in V_t$ for every $t\in[0,T]$. 

More in general, given $F\in H^1(0,T;H^d_s)$ we will study the following viscoelastic dynamic system
\begin{equation}\label{classic_model_inf3}
    \ddot u(t)-\div((\A +\B) eu(t))+ \int_{0}^t\frac{1}{\beta}\e^{-\frac{t-\tau}{\beta}}\div(\B e u(\tau))\de \tau=f(t)-\div F(t)\quad\text{in $\OG_t$, $t\in(0,T)$},
\end{equation}
with boundary and initial conditions
\begin{alignat}{4}
&u(t)=z(t) && \qquad \text{on $\partial_D\Omega$}, & \quad t\in(0,T),\label{boundary1_inf3}\\
&\Big[\left(\A +\B\right) eu(t)-\int_{0}^t\frac{1}{\beta}\e^{-\frac{t-\tau}{\beta}} \B e u(\tau)\de \tau\Big]\nu=F(t)\nu  &&\qquad\text{on $\partial_N\Omega$}, & \quad t\in(0,T),\\
&\Big[\left(\A +\B\right) eu(t)-\int_{0}^t\frac{1}{\beta}\e^{-\frac{t-\tau}{\beta}} \B e u(\tau)\de \tau\Big]\nu=F(t)\nu  &&\qquad\text{on $\Gamma_t$}, & \quad t\in(0,T),\label{boundary_inf3}\\
&u(0)=u^0,\quad \dot u(0)=u^1.\label{sys_inf3}
\end{alignat}

Notice that system \eqref{classic_model_inf2}--\eqref{sys_inf2} is a particular case of system \eqref{classic_model_inf3}--\eqref{sys_inf3}. As we have already specified for system \eqref{classic_model_inf}--\eqref{boundary_inf}, also for \eqref{classic_model_inf3}--\eqref{sys_inf3} the Neumann boundary conditions are only formal, and their meaning is clarified by the following definition.

\begin{definition}\label{newdef2}
We say that $u\in\V(0,T)$ is a {\it weak solution} to the viscoelastic dynamic system \eqref{classic_model_inf3}--\eqref{sys_inf3} on the interval $[0,T]$ if $u-z\in \V^D(0,T)$,  
\begin{align}\label{first_weak}
-\int_0^T(\dot u(t),\dot v(t)) \de t+\int_0^T((\A +\B) eu(t),e v(t)) \de t&-\int_0^T\int_0^t\frac{1}{\beta}\e^{-\frac{t-\tau}{\beta}}(\B eu(\tau),e v(t)) \de\tau\de t\nonumber\\
&=\int_0^T(f(t),v(t)) \de t+\int_0^T(F(t),ev(t)) \de t
\end{align}
for every $v \in \D(0,T)$, and 
\begin{equation}\label{ini*}
    \lim_{t\to 0^+}\|u(t)-u^0\|=0,\quad\lim_{t\to 0^+}\|\dot u(t)-u^1\|_{(V^D_0)'}=0.
 \end{equation}
\end{definition}

\begin{remark}\label{dens}
From Lemma \ref{spazi_dens}, if a function $u\in\V(0,T)$ satisfies \eqref{first_weak} for every $v\in \D(0,T)$, then it satisfies the same equality for every $v\in \V^D(0,T)$ such that $v(0)=v(T)=0$.
\end{remark}

\section{Existence by using Dafermos' method}\label{sub1}
 In this section we present an existence result which is to be considered in the framework of functional analysis; in particular it derives from an idea of C. Dafermos (see~\cite{DA}) based on a generalization of Lax-Milgram's Theorem by J.L. Lions (see~\cite{LL}). We start by stating the main result of this section.
 \begin{theorem}\label{main1}
 There exists a weak solution $u\in \V(0,T)$ to the viscoelastic dynamic system \eqref{classic_model_inf3}--\eqref{sys_inf3} on the interval $[0,T]$ in the sense of Definition \ref{newdef2}. Moreover, there exists a positive constant $C=C(T,\A,\B,\beta)$ such that
 \begin{equation}\label{stima_sol}
     \|u\|_{\V(0,T)}\leq C\left(\|f\|_{L^2(0,T;H)}+\|F\|_{H^1(0,T;H^d_s)}+\|\ddot z\|_{L^2(0,T;H)}+\| z\|_{H^1(0,T;V)}+\|u^0\|_V+\|u^1\|\right).
 \end{equation}
 \end{theorem}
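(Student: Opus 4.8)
\noindent\emph{Proof idea.} I would recast the weak problem of Definition~\ref{newdef2} as an abstract linear equation ``$\mathcal B(u,\varphi)=\mathcal R(\varphi)$ for every $\varphi$ in a test space $\Phi$'' and solve it by the generalized Lax--Milgram theorem of Lions (\cite[Chapter~3, Theorem~1.1]{LL}), following the variational scheme used by Dafermos for linear viscoelasticity (\cite[Theorem~2.1]{DA}); the new feature is that the relevant function spaces depend on time through $\{V_t^D\}_{t\in[0,T]}$, so every manipulation of the test functions must preserve the constraint $\varphi(t)\in V_t^D$, which is possible thanks to the norm--nonincreasing inclusions $V_s^D\hookrightarrow V_t^D$ for $s\ge t$ (coming from $\Gamma_s\subset\Gamma_t$). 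The first step is to homogenize the Dirichlet datum: as $u^0\in V_0^D\subset V_t^D$ for every $t$, the map $\ell(t):=z(t)+u^0-z(0)$ lies in $\V(0,T)$, equals $z$ on $\partial_D\Omega$, satisfies $\ell(0)=u^0$, and belongs to $H^2(0,T;H)\cap H^1(0,T;V)$. Writing $u=\ell+\bar u$, the unknown $\bar u$ must lie in $\V^D(0,T)$, vanish at $t=0$, satisfy $\dot{\bar u}(0)=u^1-\dot z(0)=:\bar u^1\in H$, and solve \eqref{classic_model_inf3}--\eqref{boundary_inf3} with $f$ replaced by $f-\ddot z\in L^2(0,T;H)$ and $F$ by $\bar F:=F-(\A+\B)e\ell+\int_0^{\cdot}\tfrac1\beta\e^{-\frac{\cdot-\tau}{\beta}}\B\,e\ell(\tau)\,\de\tau$; since convolution against $s\mapsto\tfrac1\beta\e^{-s/\beta}$ is bounded from $L^2(0,T;H^d_s)$ into $H^1(0,T;H^d_s)$ and $\ell\in H^1(0,T;V)$, one gets $\bar F\in H^1(0,T;H^d_s)$, with $\|f-\ddot z\|_{L^2(0,T;H)}+\|\bar F\|_{H^1(0,T;H^d_s)}$ bounded by the right--hand side of \eqref{stima_sol}. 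Testing \eqref{classic_model_inf3} with $\varphi$ and integrating by parts once in space and twice in time, using \eqref{boundary1_inf3}--\eqref{sys_inf3}, the problem for $\bar u$ becomes
\[
\int_0^T(\bar u,\ddot\varphi)\,\de t+\int_0^T\!\big((\A+\B)e\bar u,e\varphi\big)\,\de t-\mathcal M(\bar u,\varphi)=\int_0^T(f-\ddot z,\varphi)\,\de t+\int_0^T(\bar F,e\varphi)\,\de t+(\bar u^1,\varphi(0)),
\]
where $\mathcal M(u,\varphi):=\int_0^T\!\int_0^t\tfrac1\beta\e^{-\frac{t-\tau}{\beta}}(\B eu(\tau),e\varphi(t))\,\de\tau\,\de t$, required for all smooth $\varphi$ with $\varphi(t)\in V_t^D$ and $\varphi(T)=\dot\varphi(T)=0$; the term $(\bar u^1,\varphi(0))$ carries the initial velocity, and — because the ``initial-position datum'' on the right is zero — the identity (tested also against $\varphi$ with $\varphi(0)$, $\dot\varphi(0)$ arbitrary) forces $\bar u(0)=0$ a posteriori.

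Next, take $\mathcal F:=\V^D(0,T)$, a Hilbert space by Lemma~\ref{spazi_dens}, and let $\Phi\subset\mathcal F$ be the subspace of smooth $V_t^D$--valued maps with $\varphi(T)=\dot\varphi(T)=0$, equipped with a Hilbertian norm $\|\cdot\|_\Phi$ (to be chosen below, stronger than the norm of $\mathcal F$ and dominating $|\varphi(0)|$); let $\mathcal B(u,\varphi)$ and $\mathcal R(\varphi)$ be the left-- and right--hand sides of the displayed identity. For fixed $\varphi\in\Phi$, the map $u\mapsto\mathcal B(u,\varphi)$ is continuous on $\mathcal F$ (Cauchy--Schwarz and \eqref{linfinity} for the first two terms, Young's convolution inequality for $\mathcal M(\cdot,\varphi)$, whose kernel has $L^1(0,T)$--norm $<1$), and $\mathcal R\in\Phi'$ by the integrability of the reduced data and the choice of $\|\cdot\|_\Phi$. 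Hence, by Lions' theorem, the whole matter reduces to the diagonal coercivity estimate $|\mathcal B(\varphi,\varphi)|\ge\alpha\|\varphi\|_\Phi^2$ for some $\alpha>0$; granting it, there exists $\bar u\in\mathcal F$ with $\mathcal B(\bar u,\varphi)=\mathcal R(\varphi)$ on $\Phi$ and $\|\bar u\|_{\mathcal F}\le\alpha^{-1}\|\mathcal R\|_{\Phi'}$.

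The coercivity is the heart of the matter, and the step where the argument of \cite{DA} must be reworked. By \eqref{CB2M} the tensor $\B(x)$ is invertible, with $\B^{-1}\in L^\infty(\Omega;\mathscr L(\R^{d\times d}_{sym};\R^{d\times d}_{sym}))$ positive definite. For $\varphi\in\Phi$, let $m$ be the $H^d_s$--valued map with $\beta\dot m+m=\B e\varphi$ and $m(0)=0$ (so $m(t)=\int_0^t\tfrac1\beta\e^{-(t-\tau)/\beta}\B e\varphi(\tau)\,\de\tau$); using the symmetry \eqref{CB1.5} of $\B$ one verifies
\[
\int_0^T\!\big((\A+\B)e\varphi,e\varphi\big)\,\de t-\mathcal M(\varphi,\varphi)=\int_0^T\!(\A e\varphi,e\varphi)\,\de t+\beta^2\!\int_0^T\!(\B^{-1}\dot m,\dot m)\,\de t+\tfrac\beta2(\B^{-1}m(T),m(T)),
\]
whose three right--hand terms are nonnegative by \eqref{CB2M}, the first being $\ge C_\A\|e\varphi\|_{L^2(0,T;H^d_s)}^2$: this is the strict positivity of the viscoelastic bilinear form. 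Since moreover $\int_0^T(\varphi,\ddot\varphi)\,\de t=-(\varphi(0),\dot\varphi(0))-\int_0^T\|\dot\varphi\|^2\,\de t$ (as $\varphi(T)=0$), combining the identity above with Korn's inequality \eqref{eq:hyp_kornM}, with a suitable choice of $\|\cdot\|_\Phi$ involving a time--exponential weight (which also shrinks the effective $L^1$--norm of the memory kernel), and with a careful handling of the boundary-in-time terms, one obtains $|\mathcal B(\varphi,\varphi)|\ge\alpha\|\varphi\|_\Phi^2$. I expect this to be the main obstacle, also because one must ensure that all the auxiliary objects occurring in the estimate (notably the weighted time-primitives of the test functions) remain valued in the moving spaces $V_t^D$.

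Finally, one undoes the reduction: $u:=\ell+\bar u\in\V(0,T)$ with $u-z\in\V^D(0,T)$. For $v\in\D(0,T)\subset\Phi$ one has $v(0)=0$ and $\int_0^T(\bar u,\ddot v)\,\de t=-\int_0^T(\dot{\bar u},\dot v)\,\de t$, so $\mathcal B(\bar u,v)=\mathcal R(v)$ is exactly \eqref{first_weak} for $\bar u$ with data $(f-\ddot z,\bar F)$; hence $u$ satisfies \eqref{first_weak} for every $v\in\D(0,T)$, and therefore for every $v\in\V^D(0,T)$ with $v(0)=v(T)=0$ by Lemma~\ref{spazi_dens} and Remark~\ref{dens}. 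Estimate \eqref{stima_sol} follows from $\|u\|_{\V(0,T)}\le\|\ell\|_{\V(0,T)}+\alpha^{-1}\|\mathcal R\|_{\Phi'}$, the bounds of the first step, and $H^1(0,T;X)\hookrightarrow C^0([0,T];X)$. As for \eqref{ini*}: $u\in H^1(0,T;H)$ makes $u(0)\in H$ well defined, with $u(0)=u^0$ because $\bar u(0)=0$; and from \eqref{classic_model_inf3} one has $\ddot u=\div((\A+\B)eu)-\int_0^t\tfrac1\beta\e^{-(t-\tau)/\beta}\div(\B eu(\tau))\,\de\tau+f-\div F\in L^2(0,T;(V_0^D)')$, so $\dot u\in C^0([0,T];(V_0^D)')$; since the first step already yields $\dot{\bar u}(0)=\bar u^1$, hence $\dot u(0)=u^1$, this gives the second limit in \eqref{ini*}.
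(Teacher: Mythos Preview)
Your overall strategy---reduce to zero Dirichlet/initial displacement, recast the problem as $\mathcal B(u,\varphi)=\mathcal R(\varphi)$ and apply Lions' theorem---is exactly the paper's plan. However, the coercivity step, which you yourself flag as ``the main obstacle'', does not go through in the form you propose, and this is precisely where the paper departs from what you wrote.

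With your choice of bilinear form one has, on the diagonal,
\[
\mathcal B(\varphi,\varphi)=\int_0^T(\varphi,\ddot\varphi)\,\de t+\int_0^T(\A e\varphi,e\varphi)\,\de t+\{\text{nonnegative viscoelastic part}\}
= -(\varphi(0),\dot\varphi(0))-\int_0^T\|\dot\varphi\|^2\,\de t+\cdots,
\]
and the kinetic contribution carries the \emph{wrong sign}. No exponential time weight repairs this: you can choose $\varphi$ with $e\varphi$ small in $L^2$ but $\dot\varphi$ large, making $\mathcal B(\varphi,\varphi)$ large negative, while for $\varphi$ with large $e\varphi$ and small $\dot\varphi$ it is large positive; thus $\mathcal B(\varphi,\varphi)$ has no definite sign and cannot dominate any norm $\|\varphi\|_\Phi^2$. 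The missing idea is the Dafermos weighted reformulation. The paper does not apply Lions' theorem to your $\mathcal B$ but first rewrites the problem as the ``Dafermos Equality'': one tests not with $\varphi$ but with $(t-t_0)\dot\varphi$ for $\varphi$ smooth with $\varphi(0)=0$, which produces a new bilinear form whose kinetic part is
\[
\int_0^{t_0}\|\dot\varphi\|^2\,\de t+\int_0^{t_0}(t-t_0)(\dot\varphi,\ddot\varphi)\,\de t
=\tfrac{t_0}{2}\|\dot\varphi(0)\|^2+\tfrac12\int_0^{t_0}\|\dot\varphi\|^2\,\de t,
\]
now with the \emph{right} sign. Similarly the $(t-t_0)$ weight turns the elastic and memory terms into exact time derivatives, and one obtains genuine coercivity---but only provided $t_0$ is small (the paper takes $t_0<\tfrac{1}{2C_\A}$), because the Korn lower-order term is absorbed via $\|\varphi(t)\|^2\le t_0\int_0^{t_0}\|\dot\varphi\|^2$. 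A further nontrivial step, absent from your outline, is then to patch the local solutions on $[0,t_0]$, $[\hat b,\hat b+t_0]$, \dots, into a solution on all of $[0,T]$, which the paper does by a careful gluing argument (choosing Lebesgue points of $u$ and $\dot u$ as matching times and propagating the a priori bound). Finally, the equivalence between the Dafermos Equality and the weak formulation of Definition~\ref{newdef2} (including recovery of the initial velocity $\dot u(0)=u^1$ in $(V_0^D)'$) is itself a separate proposition requiring its own approximation argument with test functions adapted to the moving spaces $V_t^D$.
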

 
 \begin{remark}\label{remi}
Without loss of generality we may assume that the Dirichlet datum and the initial displacement are identically equal to zero. Indeed, the function $u$ is a weak solution to the viscoelastic dynamic system \eqref{classic_model_inf3}--\eqref{sys_inf3} according to Definition \ref{newdef2} if and only if the function $u^*$ defined by 
$$u^*(t):=u(t)-u^0+z(0)-z(t),$$
satisfies
    \begin{align*}
        -\int_0^T(\dot {u}^*(t),\dot \psi(t)) \de t+\int_0^T((\A +\B) e u^*(t),e \psi(t)) \de t&-\int_0^T\int_0^t\frac{1}{\beta}\e^{-\frac{t-\tau}{\beta}}(\B  e u^*(\tau),e \psi(t)) \de\tau\de t\nonumber\\
&=\int_0^T( f^* (t),\psi(t)) \de t+\int_0^T(  F^*(t),e\psi(t))\de t,
    \end{align*}
for every $\psi\in\D(0,T) $, and
\begin{align*}
        \lim_{t\to 0^+}\|  u^*(t)\|=0,\quad\lim_{t\to 0^+}\|\dot { u}^*(t)-u^1_*\|_{(V^D_0)'}=0,
    \end{align*}
where $  f^* :=f-\ddot z$, $ u^1_*:=u^1-\dot z(0)$, and for every $t\in [0,T]$ 
\begin{align*}
 F^*(t):=F(t)+\int_0^t\frac{1}{\beta}\e^{-\frac{t-\tau}{\beta}}\B ez(\tau)\de \tau-(\A +\B)  ez(t)- (\A+\e^{-\frac{t}{\beta}}\B )(eu^0-ez(0)).
\end{align*}

Moreover, if $u^*$ satisfies for some positive constants $C^*$ the following estimate
\begin{equation}\label{stima_par}
     \|u^*\|_{\V(0,T)}\leq C^*\left(\|f^*\|_{L^2(0,T;H)}+\|F^*\|_{H^1(0,T;H^d_s)}+\|u^1_*\|\right),
\end{equation}
then $u$ satisfies \eqref{stima_sol}. Indeed, since
\begin{align*}
    \|f^*\|_{L^2(0,T;H)}&\leq  \|f\|_{L^2(0,T;H)}+ \|\ddot z\|_{L^2(0,T;H)},
\end{align*}
and for some positive constants $\bar{C}=C(T,\A,\B,\beta)$ we have
\begin{align*}
     \|F^*\|_{H^1(0,T;H^d_s)}&\leq \|F\|_{H^1(0,T;H^d_s)}+\Big(1+\frac{2^{\frac{1}{2}}}{\beta}\Big)\Big\|\int_0^{\cdot}\frac{1}{\beta}\e^{-\frac{\cdot-\tau}{\beta}}\B ez(\tau)\de \tau\Big\|_{L^2(0,T;H^d_s)}+\frac{2^{\frac{1}{2}}}{\beta}\|\B\|_{\infty}\|z\|_{L^2(0,T;V)}\nonumber\\
     &\hspace{0.5cm}+\|\A+\B\|_{\infty}\|z\|_{H^1(0,T;V)}+(\|\A\|_{\infty}+\|\e^{-\frac{\cdot}{\beta}}\|_{H^1(0,T)}\|\B\|_{\infty})(\|u^0\|_V+\|z(0)\|_V)\nonumber\\
     &\leq \bar{C}(\|F\|_{H^1(0,T;H^d_s)}+\|z\|_{H^1(0,T;V)}+\|u^0\|_V),
\end{align*}
from \eqref{stima_par} we deduce
\begin{align*}
    \|u\|_{\V(0,T)}&\leq \|u^*\|_{\V(0,T)}+T^{\frac{1}{2}}(\|u^0\|_V+\|z(0)\|_V)+\|z\|_{\V(0,T)}\\
    &\leq C\left(\|f\|_{L^2(0,T;H)}+\|F\|_{H^1(0,T;H^d_s)}+\|\ddot z\|_{L^2(0,T;H)}+\| z\|_{H^1(0,T;V)}+\|u^0\|_V+\|u^1\|\right),
\end{align*}
where $C=C(T,\A,\B,\beta)$ is a positive constant.
\end{remark}

Based on Remark \ref{remi}, we now assume that the Dirichlet datum and the initial displacement are identically equal to zero. To prove the theorem in this case, we first prove that our weak formulation \eqref{first_weak} with initial conditions \eqref{ini*} is equivalent to another one, which we call Dafermos' Equality. After that, by means of a Lions' theorem we prove that there exists an element which satisfies this equality. Namely, by defining for every $a,b\in[0,T]$ such that $a<b$ the space
\begin{equation*}
    \E^D_0(a,b):=\{\varphi\in C^{\infty}([a,b];V):\varphi(a)=0, \hspace{2pt}\varphi(t)\in V^D_t\hspace{2pt} \text{for every $t\in [a,b]$}\},
\end{equation*}
we can state the following equivalence result.
\begin{proposition}\label{daf}
Suppose that there exists $u\in \V^D(0,T)$ which satisfies the initial condition $u(0)=0$ in the sense of \eqref{ini*}, and such that Dafermos' Equality holds:
\begin{align}\label{dafermos2}
    \int_0^T(\dot u(t),\dot \varphi (t))\de t&\hspace{-1pt}+\hspace{-1pt}\int_0^T (t-T)\Big[(\dot u(t),\ddot \varphi(t))\hspace{-1pt}-\hspace{-1pt}((\A +\B) eu(t),e\dot\varphi(t))\hspace{-1pt}+\hspace{-1pt}\int_0^t\frac{1}{\beta}\e^{-\frac{t-\tau}{\beta}}(\B eu(\tau),e \dot \varphi(t))\de \tau\Big]\hspace{-1pt}\de t\nonumber\\
    &=T(u^1,\dot \varphi(0))-\int_0^T(t-T)\left[( f (t),\dot \varphi(t))+( F(t), e\dot\varphi(t))\right] \de t\quad \text{for every $\varphi\in \E_0^D(0,T)$}.
\end{align}
Then $u$ satisfies \eqref{first_weak}, $u(0)=0$ and $\dot u(0)$ coincides with $u^1$ in $(V^D_0)'$. Moreover, if $u\in \V^D(0,T)$ is a weak solution in the sense of Definition \ref{newdef2}, then it satisfies \eqref{dafermos2}.
\end{proposition}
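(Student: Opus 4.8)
The plan is to exhibit an explicit correspondence between the test functions of the two weak forms and to read Dafermos' Equality \eqref{dafermos2} as the weak formulation \eqref{first_weak} tested against $v(t)=(t-T)\dot\varphi(t)$, corrected by the boundary term an integration by parts in time produces because such a $v$ need not vanish at $t=0$. By Remark~\ref{remi} I work in the reduced setting $z\equiv0$, $u^0=0$, so $u(0)=0$. The bridge goes in both directions: on the one hand $\varphi\mapsto(t-T)\dot\varphi$, and on the other, for $v\in\D(0,T)$ — which vanishes near $0$ and near $T$, so that $s\mapsto v(s)/(s-T)$ again lies in $\D(0,T)$ — the function $\varphi_v(t):=\int_0^t\frac{v(s)}{s-T}\,\de s$ belongs to $\E_0^D(0,T)$: it is smooth, $\varphi_v(0)=0$, $\dot\varphi_v(0)=-v(0)/T=0$, and $\varphi_v(t)\in V^D_t$ because $v(s)/(s-T)\in V^D_s\subseteq V^D_t$ for $s\le t$ and $V^D_t$ is a closed subspace of $V$; moreover $(t-T)e\dot\varphi_v=ev$, $(t-T)\dot\varphi_v=v$ and $(t-T)\ddot\varphi_v=\dot v-\dot\varphi_v$.

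For the implication \eqref{dafermos2}$\,\Rightarrow\,$Definition~\ref{newdef2} I would argue in three steps. First, inserting $\varphi=\varphi_v$ into \eqref{dafermos2} for an arbitrary $v\in\D(0,T)$: the term $T(u^1,\dot\varphi_v(0))$ drops since $\dot\varphi_v(0)=0$, and substituting the three identities above converts \eqref{dafermos2} into \eqref{first_weak} (up to an overall sign), establishing \eqref{first_weak}. Second, testing \eqref{first_weak} with $v(t)=\chi(t)\psi$, $\chi\in C^\infty_c(0,T)$ and $\psi\in V^D_0$ (admissible since $\psi\in V^D_0\subseteq V^D_t$), shows that $\frac{\de}{\de t}(\dot u(t),\psi)$ equals in $\mathcal D'(0,T)$ the $L^2(0,T)$ function $g_\psi(t):=-((\A+\B)eu(t),e\psi)+\int_0^t\tfrac1\beta\e^{-\frac{t-\tau}\beta}(\B eu(\tau),e\psi)\,\de\tau+(f(t),\psi)+(F(t),e\psi)$; hence $t\mapsto(\dot u(t),\psi)\in H^1(0,T)$ admits a continuous representative and $\dot u(0)\in(V^D_0)'$ is well defined. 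Third, testing \eqref{dafermos2} with $\varphi(t)=t\psi$, $\psi\in V^D_0$, and using $\ddot\varphi=0$, the identity $\frac{\de}{\de t}(\dot u,\psi)=g_\psi$, and one integration by parts in $t$, the whole equality reduces to $T(\dot u(0),\psi)=T(u^1,\psi)$; since $\psi\in V^D_0$ is arbitrary, $\dot u(0)=u^1$ in $(V^D_0)'$, which together with $u(0)=0$ is exactly Definition~\ref{newdef2}.

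For the converse, let $u$ be a weak solution; then \eqref{first_weak} holds for every $v\in\V^D(0,T)$ with $v(0)=v(T)=0$ (Lemma~\ref{spazi_dens}, Remark~\ref{dens}), and $u(0)=0$, $\dot u(0)=u^1$ in $(V^D_0)'$ by \eqref{ini*}. Fix $\varphi\in\E_0^D(0,T)$; since the spaces $V^D_t$ are increasing in $t$ and closed, left difference quotients give $\dot\varphi(t)\in V^D_t$ for every $t>0$, and right difference quotients give $\dot\varphi(0)\in\bigcap_{t>0}V^D_t$, so that $v:=(t-T)\dot\varphi\in\V^D(0,T)$ with $v(T)=0$ but $v(0)=-T\dot\varphi(0)$, which lies in every $V^D_t$ with $t>0$. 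I would first upgrade \eqref{first_weak} to an \emph{extended} identity valid for every such $v$ (vanishing only at $T$), of the same form as \eqref{first_weak} but with an extra right-hand term $(u^1,v(0))$; this is obtained by writing $v=\tilde v+(1-t/T)\,v(0)$, where $\tilde v\in\V^D(0,T)$ vanishes at both endpoints so that \eqref{first_weak} applies to it, and treating the affine remainder $(1-t/T)\,v(0)$ via the pointwise identity of the previous step (with $\psi=v(0)$), an integration by parts in $t$, and $\dot u(0)=u^1$. Inserting $v=(t-T)\dot\varphi$ into this extended identity, and using $\dot v=\dot\varphi+(t-T)\ddot\varphi$, $ev=(t-T)e\dot\varphi$ and $(u^1,v(0))=-T(u^1,\dot\varphi(0))$, one rearranges it into precisely \eqref{dafermos2}.

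The algebra of the substitution $v=(t-T)\dot\varphi$ is routine; the real work — and the step I expect to be the main obstacle — is the treatment of the initial velocity: producing the pointwise-in-time equation and the $H^1(0,T)$-regularity of $t\mapsto(\dot u(t),\psi)$ from the space–time weak form \eqref{first_weak}, and reconciling the resulting trace $\dot u(0)$ with the initial condition \eqref{ini*}, which is only prescribed in the weak topology of $(V^D_0)'$. The sharpest point is the identity $\lim_{t\to0^+}(\dot u(t),\dot\varphi(0))=(u^1,\dot\varphi(0))$ used in the converse, since a priori $\dot\varphi(0)$ lies only in $\bigcap_{t>0}V^D_t$, which may be strictly larger than $V^D_0$; here one must use \eqref{ini*} carefully, together with the fact that $\dot\varphi(0)\in V^D_t$ for every $t>0$. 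A secondary, purely technical, chore is verifying that all the auxiliary test functions ($\varphi_v$, $t\psi$, $\tilde v$, $(1-t/T)\,v(0)$, and $(t-T)\dot\varphi$ itself) take values in the correct spaces $V^D_t$, which rests only on the monotonicity $V^D_s\subseteq V^D_t$ for $s\le t$ and the closedness of these subspaces.
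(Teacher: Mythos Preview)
Your approach is correct and shares the paper's core mechanism: the correspondence $\varphi\leftrightarrow v$ given by $v=(t-T)\dot\varphi$ and $\varphi_v(t)=\int_0^t v(s)/(s-T)\,\de s$ (the paper's Lemma~\ref{spaziM}), together with the regularity $\ddot u\in L^2(0,T;(V_0^D)')$ that gives meaning to $\dot u(0)$ (the paper's Proposition~\ref{dev2}). The routes differ in two places. In the forward direction, to extract $\dot u(0)=u^1$ the paper develops a separate approximation result (Proposition~\ref{pass}, via an explicit piecewise-affine cutoff $\psi_\varepsilon$) that extends \eqref{first_weak} to test functions in $\LT$, and then compares the resulting identity \eqref{modM} with a second identity \eqref{mod2} obtained from Dafermos' equality on the space $\E_T^D(0,T)$; your single test $\varphi(t)=t\psi$, $\psi\in V_0^D$, reaches the same conclusion more directly and with less machinery. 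In the converse, the paper again invokes Proposition~\ref{pass} and then plugs $v_\varphi=(t-T)\dot\varphi$ into \eqref{modM}; your decomposition $v=\tilde v+(1-t/T)v(0)$ bypasses the cutoff construction by handling the affine remainder through the pointwise-in-time equation and one integration by parts. Both routes are sound, and both run into exactly the delicate point you flag: whether $\dot\varphi(0)\in V_0^D$ (equivalently $v_\varphi(0)\in V_0^D$, which is needed for $v_\varphi\in\LT$) or only $\dot\varphi(0)\in\bigcap_{t>0}V_t^D$. The paper asserts $v_\varphi\in\LT$ without comment, so your explicit identification of this issue is, if anything, more careful than the original.
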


At this point, we state and prove some lemmas and propositions needed for the proof of Proposition \ref{daf}. In particular, in the following lemma, we highlight a useful relation between $\D(0,T)$ and $\E_0^D(0,T)$. 
\begin{lemma}\label{spaziM}
For every $v\in \D(0,T)$ the function defined by
\begin{equation*}
    \varphi_v(t)=\int_0^t \frac{v(\tau)}{\tau-T}\de \tau
\end{equation*}
is well defined and satisfies $\varphi_v\in \E^D_0(0,T)$.
\end{lemma}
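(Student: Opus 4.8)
The plan is to exhibit $\varphi_v$ as the $V$-valued primitive of a genuinely smooth, compactly supported integrand, and then to upgrade the pointwise-in-time membership from $V$ to $V_t^D$ using the monotonicity of $t\mapsto\Gamma_t$ together with the stability of closed subspaces under Bochner integration.

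First I would remove the apparent singularity at $\tau=T$. Since $v\in C^{\infty}_c(0,T;V)$, there is $\delta\in(0,T/2)$ with $\supp v\subset[\delta,T-\delta]$. Hence the map $g\colon[0,T]\to V$ given by $g(\tau):=\frac{v(\tau)}{\tau-T}$ for $\tau\in[0,T-\delta]$ and $g(\tau):=0$ for $\tau\in[T-\delta,T]$ is well defined (the two prescriptions agree on the overlap, where $v\equiv 0$), it belongs to $C^{\infty}_c(0,T;V)$ — being on $[0,T-\delta]$ the product of the smooth $V$-valued map $v$ with the smooth scalar map $\tau\mapsto(\tau-T)^{-1}$, and vanishing identically near $0$ and near $T$ — and $g(\tau)=\frac{v(\tau)}{\tau-T}$ for every $\tau\in[0,T)$. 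Then $\varphi_v(t)=\int_0^t g(\tau)\,\de\tau$ is a well-defined element of $V$ for every $t\in[0,T]$, and, being the primitive of a map in $C^{\infty}([0,T];V)$, it satisfies $\varphi_v\in C^{\infty}([0,T];V)$, $\dot\varphi_v=g$, and $\varphi_v(0)=0$. This already yields two of the three defining properties of $\E^D_0(0,T)$.

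The remaining point is that $\varphi_v(t)\in V_t^D$ for every $t\in[0,T]$. The key fact is the inclusion $V_s^D\subseteq V_t^D$ for $s\le t$: by (E3), $\Gamma_s\subseteq\Gamma_t$, so $\OG_t\subseteq\OG_s$ and every $u\in V_s=H^1(\OG_s;\R^d)$ restricts to an element of $V_t=H^1(\OG_t;\R^d)$; moreover $\mathcal L^d(\Gamma_t\setminus\Gamma_s)\le\mathcal L^d(\Gamma)=0$, so the $V_s$-norm and the $V_t$-norm coincide on $V_s$, and since $\Gamma_s,\Gamma_t\subseteq\Gamma$ the traces on $\partial\Omega$ agree (they are the trace furnished by \eqref{eq:vis_traceM}), so the homogeneous Dirichlet condition is preserved. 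Thus, under these identifications in $L^2(\Omega;\R^d)$, $V_t^D$ is a closed subspace of the Hilbert space $V_t$ which contains $V_s^D$ for every $s\le t$. Now fix $t\in[0,T]$. For each $\tau\in[0,t]$ one has $v(\tau)\in V_\tau^D\subseteq V_t^D$, hence $g(\tau)\in V_t^D$; since the $V_t$-norm agrees with the $V$-norm on $V_t$, the map $\tau\mapsto g(\tau)$ is continuous from $[0,t]$ into $V_t$ with values in the closed subspace $V_t^D$, and therefore its Bochner integral $\varphi_v(t)=\int_0^t g(\tau)\,\de\tau$ lies in $V_t^D$. Combining this with the previous paragraph gives $\varphi_v\in\E^D_0(0,T)$.

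The main obstacle is purely the bookkeeping in the last paragraph: one has to keep track of the fact that $V_s$, $V_t$ and $V$ are all being viewed as subspaces of $L^2(\Omega;\R^d)$ with mutually compatible norms and boundary traces — exactly what hypotheses (E1)–(E3) and the trace estimate \eqref{eq:vis_traceM} were set up to guarantee. Once that identification is granted, the inclusion $V_\tau^D\subseteq V_t^D$ for $\tau\le t$ and the passage of membership through the Bochner integral are both immediate, and the handling of the singularity at $\tau=T$ is routine given the compact support of $v$.
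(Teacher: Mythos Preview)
Your proof is correct and follows essentially the same approach as the paper's own proof: well-definedness from the compact support of $v$, smoothness and $\varphi_v(0)=0$ from the primitive construction, and $\varphi_v(t)\in V_t^D$ via the inclusion $V_\tau^D\subset V_t^D$ for $\tau\le t$ together with closedness under Bochner integration. You simply spell out in more detail the justification of this last inclusion and the passage through the integral, which the paper takes as understood.
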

\begin{proof}
Firstly, we can notice that $\varphi_v$ is well defined because $v$ is a function with compact support, hence it vanishes in a neighborhood of $T$. Moreover, $\varphi_v(0)=0$ by definition and $\varphi_v\in C^{\infty}([0,T];V)$ because it is a primitive of a function with the same regularity. Now, we can observe that $v(\tau)\in V^D_{\tau}\subset V^D_t$ for every $\tau\leq t$, therefore we have $\frac{v(\tau)}{\tau-T}\in V^D_t$ for every $\tau\leq t$, and by the properties of Bochner's integral we get $\varphi_v(t)\in V^D_t$. 
\end{proof}

In the next proposition we show that the distributional second derivative in time of a weak solution is an element of the space $L^2(0,T;(V_0^D)')$. Therefore, such a solution has an initial velocity in the space $ (V_0 ^ D) '$.
\begin{proposition}\label{dev2}
Let $u\in \V^D(0,T)$ be a function which satisfies \eqref{first_weak}. Then the distributional derivative of $\dot u$ belongs to the space $L^2(0,T;(V^D_0)') $. 
\end{proposition}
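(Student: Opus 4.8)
The plan is to read off the distributional second derivative of $u$ directly from the weak formulation \eqref{first_weak}, interpreting it as an $L^2$-in-time function with values in $(V_0^D)'$. The key observation is that every term in \eqref{first_weak} except the first one is already manifestly of the form $\int_0^T \langle\,\cdot\,, v(t)\rangle\,\de t$ for a function of $t$ that lies in $L^2(0,T;(V_t^D)')$, and hence, by restriction, in $L^2(0,T;(V_0^D)')$ since $V_0^D\subset V_t^D$ for every $t$.

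First I would fix $\varphi\in C_c^\infty(0,T)$ and $\eta\in V_0^D$, and take $v(t):=\varphi(t)\eta$ as a test function in \eqref{first_weak}; this is legitimate, since $\varphi(t)\eta\in\D(0,T)$ (indeed $\eta\in V_0^D\subset V_t^D$ for all $t$, so $v(t)\in V_t^D$). Plugging this in and using $\dot v(t)=\dot\varphi(t)\eta$, $ev(t)=\varphi(t)e\eta$, equation \eqref{first_weak} becomes
\begin{equation*}
-\int_0^T (\dot u(t),\eta)\,\dot\varphi(t)\,\de t = \int_0^T \langle \Lambda(t),\eta\rangle_{(V_0^D)'}\,\varphi(t)\,\de t,
\end{equation*}
where, for a.e.\ $t$, $\Lambda(t)\in (V_0^D)'$ is the functional
\begin{equation*}
\langle\Lambda(t),\eta\rangle := (f(t),\eta)+(F(t),e\eta)-((\A+\B)eu(t),e\eta)+\int_0^t\frac{1}{\beta}\e^{-\frac{t-\tau}{\beta}}(\B eu(\tau),e\eta)\,\de\tau.
\end{equation*}
The left-hand side is, by definition, $-\int_0^T (\dot u(t),\eta)\,\dot\varphi(t)\,\de t = \langle \frac{\de}{\de t}(\dot u,\eta), \varphi\rangle$ in the sense of scalar distributions on $(0,T)$, so this identity says precisely that the distributional derivative of $t\mapsto(\dot u(t),\eta)$ equals $t\mapsto\langle\Lambda(t),\eta\rangle$ for every fixed $\eta\in V_0^D$. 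Hence $\Lambda$ is the distributional derivative of $\dot u$ viewed in $(V_0^D)'$, i.e.\ $\ddot u=\Lambda$ in $\mathcal D'(0,T;(V_0^D)')$.

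It then remains to check $\Lambda\in L^2(0,T;(V_0^D)')$, which reduces to estimating the $(V_0^D)'$-norm of each of the four pieces. The first two are bounded by $\|f(t)\|$ and $\|F(t)\|$ respectively (using $\|e\eta\|\le\|\eta\|_{V_0}$ and the continuity of traces/embeddings), both in $L^2(0,T)$ by hypothesis. The elastic term is bounded by $\|\A+\B\|_\infty\|eu(t)\|\le C\|u(t)\|_V$, which lies in $L^2(0,T)$ since $u\in\V^D(0,T)\subset L^2(0,T;V)$. For the memory term, $\big|\int_0^t\frac{1}{\beta}\e^{-\frac{t-\tau}{\beta}}(\B eu(\tau),e\eta)\,\de\tau\big|\le \|\B\|_\infty\|\eta\|_{V_0}\int_0^t\frac{1}{\beta}\e^{-\frac{t-\tau}{\beta}}\|u(\tau)\|_V\,\de\tau$, and the convolution of the $L^1$ kernel $\frac{1}{\beta}\e^{-s/\beta}$ with the $L^2(0,T)$ function $\|u(\cdot)\|_V$ is again in $L^2(0,T)$ by Young's inequality. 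Summing, $\|\Lambda(t)\|_{(V_0^D)'}\in L^2(0,T)$, which is the claim. I do not expect a genuine obstacle here; the only point requiring a little care is the justification that $v(t)=\varphi(t)\eta$ is an admissible test function and that the scalar-distribution identity may be tested against each $\eta\in V_0^D$ separately — both of which follow from $V_0^D\subset V_t^D$ for all $t\in[0,T]$ (a consequence of (E3)) and the density statement in Lemma \ref{spazi_dens} together with Remark \ref{dens}.
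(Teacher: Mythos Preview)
Your proposal is correct and follows essentially the same route as the paper: both define the functional $\Lambda(t)\in (V_0^D)'$ from the non-kinetic terms of \eqref{first_weak}, test \eqref{first_weak} against separable functions $\varphi(t)\eta$ with $\varphi\in C_c^\infty(0,T)$ and $\eta\in V_0^D$ (admissible since $V_0^D\subset V_t^D$ by (E3)), and conclude that $\ddot u=\Lambda$ in $\mathcal D'(0,T;(V_0^D)')$. Your explicit verification that $\Lambda\in L^2(0,T;(V_0^D)')$ via Young's inequality for the memory term is a useful addition the paper leaves implicit; note that the appeal to Lemma~\ref{spazi_dens} and Remark~\ref{dens} is not actually needed here, as $\varphi(t)\eta$ lies directly in $\D(0,T)$.
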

\begin{proof}
Let $\Lambda\in L^2(0,T;(V^D_0)')$ be defined in the following way: for a.e. $t\in (0,T)$
\begin{equation}\label{defop}
    \langle \Lambda(t),\vs\rangle:=-((\A +\B)  eu(t),e\vs)+\int_0^t\frac{1}{\beta}\e^{-\frac{t-\tau}{\beta}}(\B e u(\tau),e\vs)\de \tau+( f (t),\vs)+( F(t),e\vs)\quad \text{for every $\vs\in V^D_0$,}
\end{equation}
where $\langle \cdot, \cdot \rangle$ represents the duality product between $(V^D_0)'$ and $V^D_0$. \\
Let us consider a test function $\varphi\in C^{\infty}_c(0,T)$, then for every $\vs\in V^D_0$ the function $\psi(t):=\varphi(t)\vs$ belongs to the space $C^{\infty}_c(0,T;V_0)$, and consequently $\psi\in\D(0,T)$. Now we multiply both sides of \eqref{defop} by $\varphi(t)$ and we integrate it on $(0,T)$. Thanks to \eqref{first_weak} we can write
\begin{align*}
    \int_0^T \langle \Lambda(t),\vs\rangle\varphi(t)\de t=&-\int_0^T((\A +\B)  eu(t),e\psi(t))\de t+\int_0^T\int_0^t\frac{1}{\beta}\e^{-\frac{t-\tau}{\beta}}(\B e u(\tau),e\psi(t))\de \tau\de t\nonumber\\
    &+\int_0^T( f (t),\psi(t))\de t+\int_0^T( F(t),e\psi(t))\de t=-\int_0^T(\dot u(t),\vs)\dot\varphi(t)\de t,
\end{align*}
which implies
\begin{equation*}
   \Big\langle \int_0^T \Lambda(t)\varphi(t)\de t,\textbf{v}\Big\rangle=\Big\langle-\int_0^T\dot u(t)\dot\varphi(t)\de t,\textbf{v}\Big\rangle\qquad \text{for every $\vs\in V^D_0$}.
\end{equation*}
Hence, we get
\begin{equation*}
 \int_0^T \Lambda(t)\varphi(t)\de t=-\int_0^T\dot u(t)\dot\varphi(t)\de t\qquad \text{for every $\varphi\in C^{\infty}_c(0,T)$}
\end{equation*}
as elements of $(V^D_0)'$, which concludes the proof.
\end{proof}
\begin{remark}\label{udotcon}
Proposition \ref{dev2} implies that $\dot u\in H^1(0,T;(V^D_0)')$, hence it admits a continuous representative. Therefore, we can say that there exists $\dot u(0)\in (V^D_0)'$ such that 
\begin{equation}\label{velocity}
    \lim_{t\to 0^+}\|\dot u(t)-\dot u(0)\|_{(V^D_0)'}=0.
\end{equation}
\end{remark}
In the next proposition we show how the weak formulation \eqref{first_weak} changes if we use test functions which do not vanish at zero. In particular, we use the notation $\eta(T)$ to refer to the family of open neighborhoods of $T$, and we consider the following spaces
\begin{align*}
\Li&:=\{\psi\in Lip([0,T];V): \psi(t)\in V^D_t\hspace{2pt}\text{for every $t\in [0,T]$}\},\\
\L&:=\{\psi\in \Li:\exists I_{\psi}\in \eta(T), \hspace{2pt}\text{s.t.}\hspace{4pt}\psi(t)=0 \quad\text{for every $t\in I_{\psi}\cup\{0\}$}\},\\
\LT&:=\{\Psi\in \Li:\Psi(T)=0\}.
\end{align*}

\begin{proposition}\label{pass}
Let $u\in \V^D(0,T)$ be a function which satisfies \eqref{first_weak} for every $\psi\in \L$. Then $u$ satisfies the equality
\begin{align}
-\int_0^T(\dot u(t),\dot \Psi(t)) \de t+\int_0^T((\A &+\B) eu(t),e \Psi(t)) \de t-\int_0^T\int_0^t\frac{1}{\beta}\e^{-\frac{t-\tau}{\beta}}(\B eu(\tau),e \Psi(t)) \de\tau\de t\nonumber\\
&=\int_0^T( f (t),\Psi(t)) \de t+\int_0^T( F(t),e\Psi (t)) \de t+\langle \dot u(0),\Psi(0)\rangle, \label{modM}
\end{align}
for every $\Psi\in \LT$.
\end{proposition}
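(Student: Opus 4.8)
The plan is to obtain \eqref{modM} from the weak formulation \eqref{first_weak} by testing with $\rho_\varepsilon\Psi$, where $\Psi\in\LT$ is the given function and $\rho_\varepsilon$ is a Lipschitz cut-off killing it near $t=0$, and then letting $\varepsilon\to0^+$; the term $\langle\dot u(0),\Psi(0)\rangle$ will appear in the limit out of the rescaled integrals of $\dot u$ near the origin. Two facts will be used throughout. First, since $\D(0,T)\subset\L$ — a function in $C^\infty_c(0,T;V)$ is Lipschitz on $[0,T]$, vanishes near $T$ and at $0$, and obeys the constraint $v(t)\in V^D_t$ — the assumption that $u$ satisfies \eqref{first_weak} for every $\psi\in\L$ implies, through Remark~\ref{dens} (a consequence of Lemma~\ref{spazi_dens}), that $u$ satisfies \eqref{first_weak} for every $v\in\V^D(0,T)$ with $v(0)=v(T)=0$. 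Second, by Proposition~\ref{dev2} and Remark~\ref{udotcon} the derivative $\dot u$ has a representative in $H^1(0,T;(V^D_0)')\subset C^0([0,T];(V^D_0)')$, so $\dot u(0)\in(V^D_0)'$ is well defined and, by \eqref{velocity}, $\tfrac1\varepsilon\int_0^\varepsilon\dot u(t)\,\de t\to\dot u(0)$ in $(V^D_0)'$ as $\varepsilon\to0^+$.

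I would then fix $\Psi\in\LT$ and, for $\varepsilon\in(0,T)$, let $\rho_\varepsilon\colon[0,T]\to[0,1]$ be the piecewise affine function equal to $t/\varepsilon$ on $[0,\varepsilon]$ and to $1$ on $[\varepsilon,T]$, so that $\dot\rho_\varepsilon=\tfrac1\varepsilon$ on $(0,\varepsilon)$ and $\dot\rho_\varepsilon=0$ on $(\varepsilon,T)$. The product $\rho_\varepsilon\Psi$ is Lipschitz from $[0,T]$ into $V$, satisfies $(\rho_\varepsilon\Psi)(t)=\rho_\varepsilon(t)\Psi(t)\in V^D_t$ for every $t$ (as $V^D_t$ is a subspace), and vanishes at $t=0$ and $t=T$; hence $\rho_\varepsilon\Psi\in\V^D(0,T)$ is admissible in the sense of the first fact above. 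Inserting $v=\rho_\varepsilon\Psi$ in \eqref{first_weak} and using $e((\rho_\varepsilon\Psi)(t))=\rho_\varepsilon(t)e\Psi(t)$ together with $\tfrac{d}{dt}(\rho_\varepsilon\Psi)=\dot\rho_\varepsilon\Psi+\rho_\varepsilon\dot\Psi$, I obtain
\begin{align*}
-\frac1\varepsilon\int_0^\varepsilon(\dot u(t),\Psi(t))\,\de t&-\int_0^T\rho_\varepsilon(t)(\dot u(t),\dot\Psi(t))\,\de t+\int_0^T\rho_\varepsilon(t)\big((\A+\B)eu(t),e\Psi(t)\big)\,\de t\\
&-\int_0^T\rho_\varepsilon(t)\int_0^t\frac1\beta\e^{-\frac{t-\tau}\beta}(\B eu(\tau),e\Psi(t))\,\de\tau\,\de t=\int_0^T\rho_\varepsilon(t)\big[(f(t),\Psi(t))+(F(t),e\Psi(t))\big]\,\de t.
\end{align*}

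It then remains to pass to the limit $\varepsilon\to0^+$. Since $0\le\rho_\varepsilon\le1$ and $\rho_\varepsilon\to1$ pointwise on $(0,T]$, each term carrying a factor $\rho_\varepsilon$ converges by dominated convergence to the corresponding term with $\rho_\varepsilon$ removed; for the memory term one first pulls $\rho_\varepsilon(t)$ out of the inner $\de\tau$-integral and notes that $t\mapsto\int_0^t\tfrac1\beta\e^{-\frac{t-\tau}\beta}(\B eu(\tau),e\Psi(t))\,\de\tau$ is bounded, hence in $L^1(0,T)$, because $eu\in L^2(0,T;H^d_s)$ and $e\Psi$ is bounded on $[0,T]$. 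For the first term I would split
\[
\frac1\varepsilon\int_0^\varepsilon(\dot u(t),\Psi(t))\,\de t=\frac1\varepsilon\int_0^\varepsilon(\dot u(t),\Psi(t)-\Psi(0))\,\de t+\Big\langle\frac1\varepsilon\int_0^\varepsilon\dot u(t)\,\de t,\,\Psi(0)\Big\rangle ,
\]
the pairing being legitimate since $\Psi(0)\in V^D_0$ and $\dot u(t)\in H\hookrightarrow(V^D_0)'$. Using $\|\Psi(t)-\Psi(0)\|\le L\varepsilon$ on $[0,\varepsilon]$ (with $L$ the Lipschitz constant of $\Psi$ as a map into $V$) and the Cauchy--Schwarz inequality, the first summand is $\le L\,\varepsilon^{1/2}\|\dot u\|_{L^2(0,T;H)}\to0$, while the second tends to $\langle\dot u(0),\Psi(0)\rangle$ by the second fact above. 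Collecting all these limits and moving the resulting $-\langle\dot u(0),\Psi(0)\rangle$ to the right-hand side gives precisely \eqref{modM}.

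I expect the only genuinely delicate point to be the treatment of $\tfrac1\varepsilon\int_0^\varepsilon(\dot u(t),\Psi(t))\,\de t$: its limit must be read as the duality pairing $\langle\dot u(0),\Psi(0)\rangle$ in $(V^D_0)'\times V^D_0$, and not as an $H$-scalar product, because a priori $\dot u$ is continuous only with values in $(V^D_0)'$ (Remark~\ref{udotcon}) and need not be continuous into $H$; the Lipschitz regularity of $\Psi$ built into the definition of $\LT$ is exactly what lets one control the increment $\Psi(t)-\Psi(0)$ and discard it in the limit. Everything else is a routine dominated-convergence argument.
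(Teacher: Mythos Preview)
Your argument is correct. The route, however, differs from the paper's. The paper does not invoke the density result (Remark~\ref{dens}) to enlarge the test space; it works directly in $\L$ by constructing, for $\Psi\in\LT$, a piecewise test function $\psi_\varepsilon$ equal to $\tfrac{t}{\varepsilon}\Psi(0)$ on $[0,\varepsilon]$, to a \emph{translate} $\Psi(t-\varepsilon)$ on $[\varepsilon,T-2\varepsilon]$, and to a linear ramp to $0$ on $[T-2\varepsilon,T-\varepsilon]$, so that $\psi_\varepsilon$ vanishes on a whole neighborhood of $T$ and hence lies in $\L$. This forces the paper to control three groups of terms ($I_\varepsilon$, $I^m_\varepsilon$, $J^m_\varepsilon$), including the extra contributions coming from the cut-off near $T$ and from the translation (continuity of translations in $L^2$, Lipschitz bound $\|\Psi(T-3\varepsilon)-\Psi(T)\|\le 3L_\Psi\varepsilon$, etc.). Your approach buys a substantial simplification: by first passing via $\D(0,T)\subset\L$ and Remark~\ref{dens} to test functions in $\V^D(0,T)$ vanishing only at the endpoints, the cut-off near $T$ and the translation become unnecessary, and a single multiplicative cut-off $\rho_\varepsilon$ at $t=0$ suffices, with all remaining limits handled by dominated convergence. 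The core mechanism---recovering $\langle\dot u(0),\Psi(0)\rangle$ from $\tfrac{1}{\varepsilon}\int_0^\varepsilon(\dot u,\Psi)$ via the $(V_0^D)'$-continuity of $\dot u$ at $0$ (Proposition~\ref{dev2} and Remark~\ref{udotcon})---is the same in both proofs.
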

\begin{proof}
Let us consider $\Psi\in \LT$ and define for every $\varepsilon\in(0,\frac{T}{3})$ the function
\begin{equation*}
    \psi_{\varepsilon}(t):=\begin{cases}
    \frac{t}{\varepsilon}\Psi(0) &t\in [0,\varepsilon]\\
    \Psi(t-\varepsilon) &t\in[\varepsilon,T-2\varepsilon]\\
    \left(-\frac{t}{\varepsilon}+\frac{T-\varepsilon}{\varepsilon}\right)\Psi(T-3\varepsilon) &t\in[T-2\varepsilon,T-\varepsilon]\\
    0 &t\in[T-\varepsilon,T].\\
    \end{cases}
\end{equation*}
It is easy to see that $\psi_{\varepsilon}\in \L$, and by using $\psi_{\varepsilon}$ as test function in \eqref{first_weak} we get $I_{\varepsilon}+I^m_{\varepsilon}+J_{\varepsilon}^m=0$, where the three terms $I_{\varepsilon}$, $I^m_{\varepsilon}$, and $J_{\varepsilon}^m$ are defined in the following way:
\begin{align*}
    I_{\varepsilon}:=&-\int_{\varepsilon}^{T-2\varepsilon}(\dot u(t),\dot \Psi(t-\varepsilon))\de t+\int_{\varepsilon}^{T-2\varepsilon}((\A +\B)  eu(t),e\Psi(t-\varepsilon))\de t-\int_{\varepsilon}^{T-2\varepsilon}( f (t),\Psi(t-\varepsilon))\de t\\
    &-\int_{\varepsilon}^{T-2\varepsilon}\int_0^t\frac{1}{\beta}\e^{-\frac{t-\tau}{\beta}}(\mathbb B eu(\tau),e\Psi(t-\varepsilon))\de \tau\de t-\int_{\varepsilon}^{T-2\varepsilon}( F(t),e\Psi(t-\varepsilon))\de t,
    \end{align*}
\begin{align*}
   I_{\varepsilon}^m:=-\dashint_0^{\varepsilon}(\dot u(t),\Psi(0))\de t+\dashint_0^{\varepsilon}((\A +\B)  eu(t),te\Psi(0))\de t&-\dashint_0^{\varepsilon}\int_0^t\frac{1}{\beta}\e^{-\frac{t-\tau}{\beta}}(\mathbb B eu(\tau),te\Psi(0))\de\tau\de t\\
   &-\dashint_0^{\varepsilon}( f (t),t\Psi(0))\de t-\dashint_0^{\varepsilon}(  F(t),t e\Psi(0))\de t,
\end{align*}
and
\begin{align*}
   J_{\varepsilon}^m:&=\dashint_{T-2\varepsilon}^{T-\varepsilon}(\dot u(t),\Psi(T-3\varepsilon))\de t+\dashint_{T-2\varepsilon}^{T-\varepsilon}((\A +\B)  eu(t),(-t+T-\varepsilon)e\Psi(T-3\varepsilon))\de t\\
   &-\dashint_{T-2\varepsilon}^{T-\varepsilon}\int_0^t\frac{1}{\beta}\e^{-\frac{t-\tau}{\beta}}(\mathbb B eu(\tau),(-t+T-\varepsilon)e\Psi(T-3\varepsilon))\de\tau\de t-\dashint_{T-2\varepsilon}^{T-\varepsilon}( f (t),(-t+T-\varepsilon)\Psi(T-3\varepsilon))\de t\\
   &-\dashint_{T-2\varepsilon}^{T-\varepsilon}(  F(t),(-t+T-\varepsilon) e\Psi(T-3\varepsilon))\de t.
\end{align*}
Let us study the convergence of $I_{\varepsilon}$, $I_{\varepsilon}^m$, and $J_{\varepsilon}^m$  as $\varepsilon\rightarrow 0^+$. First of all, we notice that from the definition of $\psi_{\varepsilon}$ and the Lipschitz continuity of $\Psi$ we have
\begin{align}\label{this}
    \|\psi_{\varepsilon}-\Psi\|^2_{L^2(0,T;V)}&=\int_0^{\varepsilon} \Big\|\frac{t}{\varepsilon}\Psi(0)-\Psi(t) \Big\|^2_{V}\de t+\int_{\varepsilon}^{T-2\varepsilon} \|\Psi(t-\varepsilon)-\Psi(t)\|^2_{V}\de t\nonumber\\
    &\hspace{4,05cm}+\int_{T-2\varepsilon}^{T-\varepsilon} \Big\|\Big(-\frac{t}{\varepsilon}+\frac{T-\varepsilon}{\varepsilon}\Big)\Psi(T-3\varepsilon)-\Psi(t)\Big\|^2_{V}\de t\nonumber\\
    &\leq 2\|\Psi(0)\|^2_{V}\int_0^{\varepsilon}\frac{t^2}{\varepsilon^2}\de t+2\int_0^{\varepsilon}\|\Psi(t)\|^2_{V}\de t+\int_{\varepsilon}^{T-2\varepsilon} L^2_{\Psi}|t-\varepsilon-t|^2\de t\nonumber\\
    &\hspace{3,3cm}+2\|\Psi(T-3\varepsilon)\|^2_{V}\int_{T-2\varepsilon}^{T-\varepsilon}\Big(-\frac{t}{\varepsilon}+\frac{T-\varepsilon}{\varepsilon}\Big)^2\de t+2\int_{T-2\varepsilon}^{T-\varepsilon}\|\Psi(t)\|^2_{V}\de t\nonumber\\
    &\leq\frac{4}{3}\varepsilon \|\Psi\|^2_{L^{\infty}(0,T;V)}+2\int_0^{\varepsilon}\|\Psi(t)\|^2_{V}\de t+2\int_{T-2\varepsilon}^{T-\varepsilon}\|\Psi(t)\|^2_{V}\de t+ L^2_{\Psi}\varepsilon^2(T-3\varepsilon)\xrightarrow[\varepsilon\to 0^+]{}0.
\end{align}
From \eqref{linfinity}, \eqref{this}, and the absolute continuity of Lebesgue's integral, we have
\begin{align}
    \Big|\int_{\varepsilon}^{T-2\varepsilon}((\A &+\B)  eu(t),e\Psi(t-\varepsilon))\de t-\int_{0}^T((\A +\B)  eu(t),e\Psi(t))\de t\Big|\leq \Big|\int_0^{\varepsilon}((\A +\B)  eu(t),e\Psi(t))\de t\Big|\nonumber\\
    &+\Big|\int_{\varepsilon}^{T-2\varepsilon}((\A +\B)  eu(t),e\Psi(t-\varepsilon)-e\Psi(t))\de t\Big|+\Big|\int_{T-2\varepsilon}^{T}((\A +\B)  eu(t),e\Psi(t))\de t\Big|\nonumber \\ 
    % &\hspace{1cm}\leq \int_0^{\varepsilon}|((\A +\B)  eu(t),e\Psi(t))|\de t+\int_{T-2\varepsilon}^{T-\varepsilon}|((\A +\B)  eu(t),e\Psi(t))|\de t\nonumber\\
    % &\hspace{2.9cm}+\|\A +\B \|_{{\infty}}\int_0^T\|u(t)\|_V\|\psi_{\varepsilon}(t)-\Psi(t)\|_{V}\de t\nonumber\\
     &\leq \|\A+\B\|_{\infty}\Big[\int_0^{\varepsilon}\|u(t)\|_V\|\Psi(t)\|_V\de t+\int_{T-2\varepsilon}^{T}\|u(t)\|_V\|\Psi(t))\|_V\de t \nonumber\\
    &\hspace{5.6cm}+\|u\|_{L^2(0,T;V)}\|\psi_{\varepsilon}-\Psi\|_{L^2(0,T;V)}\Big]\xrightarrow[\varepsilon\to 0^+]{}0. \label{prima}
   \end{align}
In the same way we can prove that
\begin{align}
    \int_{\varepsilon}^{T-2\varepsilon}\int_0^t\frac{1}{\beta}\e^{-\frac{t-\tau}{\beta}}(\mathbb B eu(\tau),e\Psi(t-\varepsilon))\de \tau\de t&\xrightarrow[\varepsilon\to 0^+]{}\int_{0}^T\int_0^t\frac{1}{\beta}\e^{-\frac{t-\tau}{\beta}}(\mathbb B eu(\tau),e\Psi(t))\de \tau\de t,\\
    \int_{\varepsilon}^{T-2\varepsilon}( f (t),\Psi(t-\varepsilon))\de t&\xrightarrow[\varepsilon\to 0^+]{}\int_{0}^T( f (t),\Psi(t))\de t,\\
    \int_{\varepsilon}^{T-2\varepsilon}( F(t),e\Psi(t-\varepsilon))\de t&\xrightarrow[\varepsilon\to 0^+]{}\int_{0}^T( F(t),e\Psi(t))\de t.
\end{align}
Notice that, by virtue of the continuity of the translation operator in $L^2$, and again by the absolute continuity of Lebesgue's integral, we can write
\begin{align}\label{ultima}
   &\Big| \int_{\varepsilon}^{T-2\varepsilon}(\dot u(t),\dot \Psi(t-\varepsilon))\de t-\int_{0}^T(\dot u(t),\dot \Psi(t))\de t\Big|\nonumber\\
   &\hspace{1cm}\leq \Big|\int_0^{\varepsilon}(\dot u(t),\dot \Psi(t))\de t\Big|+\Big|\int_{\varepsilon}^{T-2\varepsilon}(\dot u(t),\dot \Psi(t-\varepsilon)-\dot \Psi(t))\de t\Big|+\Big|\int_{T-2\varepsilon}^{T}(\dot u(t),\dot \Psi(t))\de t\Big|\nonumber\\
& \hspace{1cm}\leq\int_0^{\varepsilon}\|\dot u(t)\|\|\dot \Psi(t)\|\de t+\|\dot u\|_{L^2(0,T;H)}\|\dot \Psi(\cdot-\varepsilon)-\dot \Psi(\cdot)\|_{L^2(0,T;H)}+\int_{T-2\varepsilon}^{T}\|\dot u(t)\|\|\dot \Psi(t))\|\de t\xrightarrow[\varepsilon\to 0^+]{}0.
\end{align}
Taking into account \eqref{prima}--\eqref{ultima} we conclude that
\begin{align*}
    I_{\varepsilon}\xrightarrow[\varepsilon\to 0^+]{}-\int_{0}^T(\dot u(t),\dot\Psi(t))\de t+\int_{0}^T((\A +\B)  eu(t),e\Psi(t))\de t&-\int_{0}^T\int_0^t\frac{1}{\beta}\e^{-\frac{t-\tau}{\beta}}(\mathbb B eu(\tau),e\Psi(t))\de \tau\de t\nonumber\\
    &\hspace{-3pt}-\int_{0}^T( f (t),\Psi(t))\de t-\int_{0}^T( F(t),e\Psi(t))\de t.
\end{align*}

Now we analyze the limit of $I^m_{\varepsilon}$ as $\varepsilon\to 0^+$. By \eqref{velocity} we obtain 
\begin{align}\label{prima*M}
    \dashint_0^{\varepsilon}(\dot u(t),\Psi(0))\de t=\Big( \dashint_0^{\varepsilon}\dot u(t)\de t,\Psi(0)\Big)=\Big\langle  \dashint_0^{\varepsilon}\dot u(t)\de t,\Psi(0)\Big\rangle\xrightarrow[\varepsilon\to 0^+]{}\langle \dot u(0),\Psi(0)\rangle.
\end{align}
Moreover
\begin{align}
    \Big|\dashint_0^{\varepsilon}((\A +\B)  eu(t),te\Psi(0))\de t\Big|&\leq \|\A +\B \|_{{\infty}}\|\Psi(0)\|_V\dashint_0^{\varepsilon}t\|u(t)\|_{V}\de t\nonumber\\
    &\leq \|\A +\B \|_{{\infty}}\|\Psi\|_{L^{\infty}(0,T;V)} \Big(\frac{\varepsilon}{3}\Big)^{\frac{1}{2}}\|u\|_{L^2(0,T;V)}\xrightarrow[\varepsilon\to 0^+]{}0.
\end{align}
In the same way, we can prove that
\begin{align}
    &\dashint_0^{\varepsilon}\int_0^t\frac{1}{\beta}\e^{-\frac{t-\tau}{\beta}}(\mathbb B eu(\tau),te\Psi(0))\de\tau\de t\xrightarrow[\varepsilon\to 0^+]{}0,\\
    &\dashint_0^{\varepsilon}( f (t),t\Psi(0))\de t\xrightarrow[\varepsilon\to 0^+]{}0,\\
    &\dashint_0^{\varepsilon}( F(t),te\Psi(0))\de t\xrightarrow[\varepsilon\to 0^+]{}0,\label{ultima*M}
\end{align}
hence, by \eqref{prima*M}--\eqref{ultima*M} we obtain $I^m_{\varepsilon}\xrightarrow[\varepsilon\to 0^+]{}-\langle \dot u(0),\Psi(0)\rangle$. 

Finally, we study the behaviour of $J_{\varepsilon}^m$ as $\varepsilon\to 0^+$. Since $\Psi(T)=0$, we can write
\begin{equation}\label{ultima_new}
   \Big| \dashint_{T-2\varepsilon}^{T-\varepsilon}(\dot u(t),\Psi(T-3\varepsilon))\de t\Big|\leq \frac{1}{\varepsilon^{\frac{1}{2}}}\|\dot u\|_{L^2(0,T;H)}\|\Psi(T-3\varepsilon)-\Psi(T)\|\leq 3L_{\Psi}\|\dot u\|_{L^2(0,T;H)}\varepsilon^{\frac{1}{2}}\xrightarrow[\varepsilon\to 0^+]{}0.
\end{equation}
Moreover
\begin{align}\label{ultima_new*}
    \Big|\dashint_{T-2\varepsilon}^{T-\varepsilon}((\A +\B)  eu(t)&,(-t+T-\varepsilon)e\Psi(T-3\varepsilon))\de t\Big|\nonumber\\
    &\leq \|\A+\B\|_{\infty}\|\Psi(T-3\varepsilon)\|_{V}\Big(\dashint_{T-2\varepsilon}^{T-\varepsilon}(T-t)\|u(t)\|_V\de t+\int_{T-2\varepsilon}^{T-\varepsilon}\|u(t)\|_V\de t\Big)\nonumber\\
    &\leq \|\A+\B\|_{\infty}\|\Psi\|_{L^{\infty}(0,T;V)}\Big(\Big(\frac{7}{3}\Big)^{\frac{1}{2}}+1\Big)\varepsilon^{\frac{1}{2}}\|u\|_{L^2(0,T;V)}\xrightarrow[\varepsilon\to 0^+]{}0.
\end{align} 
By following the same strategy used in \eqref{ultima_new*}, we can prove that
\begin{align}
   &\dashint_{T-2\varepsilon}^{T-\varepsilon}\int_0^t\frac{1}{\beta}\e^{-\frac{t-\tau}{\beta}}(\mathbb B eu(\tau),(-t+T-\varepsilon)e\Psi(T-3\varepsilon))\de\tau\de t\xrightarrow[\varepsilon\to 0^+]{}0,\\
   &\dashint_{T-2\varepsilon}^{T-\varepsilon}( f (t),(-t+T-\varepsilon)\Psi(T-3\varepsilon))\de t\xrightarrow[\varepsilon\to 0^+]{}0,\\
   &\dashint_{T-2\varepsilon}^{T-\varepsilon}(  F(t),(-t+T-\varepsilon) e\Psi(T-3\varepsilon))\de t\xrightarrow[\varepsilon\to 0^+]{}0.\label{ultima_new**}
\end{align}
Thanks to \eqref{ultima_new}--\eqref{ultima_new**} we can say that $J_{\varepsilon}^m\rightarrow 0$ as $\varepsilon\to 0^+$, and this concludes the proof.
\end{proof}
We can now prove the equivalence result between the viscoelastic dynamic system~\eqref{classic_model_inf3}--\eqref{sys_inf3} (in the sense of Definition \ref{newdef2}) and Dafermos' Equality \eqref{dafermos2}, stated in Proposition \ref{daf}. 
\begin{proof}[Proof of Proposition \ref{daf}]
Let $u\in\V^D(0,T)$ be a function with $u(0)=0$, and which satisfies \eqref{dafermos2}. Let us consider $v\in \D(0,T)$. By Lemma \ref{spaziM}, the function defined by
\begin{equation}\label{phiv}
    \varphi_v(t)=\int_0^t \frac{v(\tau)}{\tau-T}\de \tau
\end{equation}
is well defined and belongs to the space $\E^D_0(0,T)$. By taking $\varphi_v$ as a test function in \eqref{dafermos2} we obtain
\begin{align}
    -\int_0^T(\dot u(t),\dot \varphi_v (t)+(t-T)\ddot\varphi_v(t))& \de t+\int_0^T\Big((\A +\B) eu(t)-\int_0^t\frac{1}{\beta}\e^{-\frac{t-\tau}{\beta}}\B eu(\tau)\de\tau,e ((t-T)\dot \varphi_v(t))\Big)\de t\nonumber\\
    &=\int_0^T(  f (t),(t-T)\dot \varphi_v(t))\de t+\int_0^T( F(t),e((t-T)\dot \varphi_v(t)))\de t\label{deb},
\end{align}
since $\dot \varphi_v(0)=\frac{v(0)}{-T}=0$. Notice that $v(t)=(t-T)\dot\varphi_v(t)$ and consequently $\dot v(t)=\dot \varphi_v(t)+(t-T)\ddot \varphi_v(t)$, by the definition of $\varphi_v$ itself. This, together with \eqref{deb}, allows us to conclude that $u\in \V^D(0,T)$ satisfies \eqref{first_weak} for every $v\in \D(0,T)$.

Now we prove that $u^1$ coincides with $\dot u(0)$. Since the function $u$ satisfies \eqref{first_weak} for every $v\in \D(0,T)$, in particular, from Remark \ref{dens}, it satisfies the same equality for every $v\in \L$. Thanks to Proposition \ref{pass}, the function $u$ satisfies \eqref{modM} for every $v\in \LT$, and therefore for every function in the space
\begin{equation*}
    \E^D_T(0,T):=\{v\in C^{\infty}([0,T];V):\exists I_v\in \eta(T), \hspace{3pt}\text{s.t.}\hspace{4pt} v(t)=0\hspace{3pt}\text{for every $t\in I_v$},\hspace{2pt} v(t)\in V^D_t\hspace{3pt}\text{for every $t\in [0,T]$}\}.
\end{equation*}
Moreover, if we define $\varphi_v$ as in \eqref{phiv} we have $\varphi_v\in \E_0^D(0,T)$, and we can use it as a test function in \eqref{dafermos2} to deduce
\begin{align}
-\int_0^T(\dot u(t),\dot v(t)) \de t+\int_0^T((\A &+\B) eu(t),e v(t)) \de t-\int_0^T\int_0^t\frac{1}{\beta}\e^{-\frac{t-\tau}{\beta}}(\B eu(\tau),e v(t)) \de\tau\de t\nonumber\\
&=\int_0^T( f (t),v(t)) \de t+\int_0^T( F(t),ev (t))\de t+(  u^1,v(0)).\label{mod2}
\end{align}
By taking the difference between \eqref{modM} and \eqref{mod2} we get $\langle u^1-\dot u(0),v(0)\rangle=0$ for every $v\in \E^D_T(0,T)$. Since for every $\vs\in V^D_0$ there exists a function $v\in \E^D_T(0,T)$ such that $v(0)=\vs$, we can obtain that $\langle u^1-\dot u(0),\vs\rangle=0$ for every $\vs\in V^D_0$, and so $u^1-\dot u(0)=0$ as element of $(V^D_0)'$. This proves the first part of the proposition.

Vice versa, let $u\in\V^D(0,T)$ be a weak solution in the sense of Definition \ref{newdef2}. Therefore, $u$ satisfies \eqref{first_weak} for every $v\in\D(0,T)$, and as we have already shown before, $u$ satisfies \eqref{modM}, with $u^1$ in place of $\dot u(0)$, for every function  $v\in \LT$. Let us consider $\varphi \in \E_0^D(0,T)$, then $v_{\varphi}(t)=(t-T)\dot \varphi(t)\in \LT$, and so it can be used as a test function in \eqref{modM}. By noticing that $\dot v_{\varphi}(t)=\dot\varphi(t)+(t-T)\ddot \varphi(t)$ and $v_{\varphi}(0)=-T\dot \varphi(0)$ we obtain the thesis.
\end{proof}

In view of the previous proposition, it will be enough to prove the existence of a solution to Dafermos' Equality \eqref{dafermos2}. In particular, we shall prove the existence of $t_0\in(0,T]$ and of a function $u\in\V^D(0,t_0)$ such that $u(0)=0$, and which satisfies Dafermos' Equality on the interval $[0,t_0]$. In order to do this, we use an abstract result due to Lions (see \cite[Chapter 3, Theorem 1.1 and Remark 1.2]{LL}). We first introduce the necessary setting. Let $X$ be a Hilbert space and $Y\subset X$ be a linear subspace, endowed with the scalar product $(\cdot,\cdot)_Y$ which makes it a pre-Hilbert space. Suppose that the inclusion of $Y$ in $X$ is a continuous map, i.e., there exists a positive constant $C$ such that
\begin{equation}\label{inclusion}
    \|u\|_X\leq C\|u\|_Y\quad \text{for every $u\in Y$}.
\end{equation}
Let us consider a bilinear form $B:X\times Y\rightarrow \R$ such that
\begin{alignat}{4}
&B(\cdot,\varphi):X\rightarrow \R &&\quad \text{is a linear continuous function on $X$ for every $\varphi\in Y$},\label{proj2}\\
&B(\varphi,\varphi)\geq \alpha \|\varphi\|^2_Y &&\quad \text{for every $\varphi\in Y$}, \text{ for some positive constant $\alpha$}.\label{proj3}
\end{alignat}

Now, we can state the aforementioned existence theorem.

\begin{theorem}[J.L. Lions]\label{Lions}
Suppose that hypotheses \eqref{inclusion}--\eqref{proj3} are satisfied, and let $L:Y\rightarrow\R$ be a linear continuous map. Then there exists $u\in X$ such that
\begin{equation*}
    B(u,\varphi)=L(\varphi) \qquad \text{for every $\varphi\in Y$.}
\end{equation*}
Moreover, the solution $u$ satisfies
\begin{equation}\label{bound_su_u}
    \|u\|_X\leq\frac{C}{\alpha}\sup\{|L(\varphi)|:\|\varphi\|_{Y}=1\}.
\end{equation}
\end{theorem}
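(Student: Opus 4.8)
The plan is to deduce existence from the Riesz representation / projection theorem by viewing $Y$ as a pre-Hilbert space on which the linear functional $\varphi \mapsto L(\varphi)$ lives, and transferring it to $X$ through the coercive bilinear form $B$. First I would introduce on $Y$ the auxiliary linear map $S\colon Y \to X$ characterized by the identity $(S\varphi, \psi)_X = B(\psi,\varphi)$ for all $\psi \in X$; this $S\varphi$ exists and is unique for each fixed $\varphi \in Y$ because, by~\eqref{proj2}, $\psi \mapsto B(\psi,\varphi)$ is a continuous linear functional on the Hilbert space $X$, so Riesz representation applies. By construction $S$ is linear in $\varphi$. The coercivity~\eqref{proj3} then gives
\begin{equation*}
    \|S\varphi\|_X \, \|\varphi\|_Y \ge \|S\varphi\|_X \, \|\varphi\|_X \ge (S\varphi,\varphi)_X = B(\varphi,\varphi) \ge \alpha \|\varphi\|_Y^2,
\end{equation*}
where the middle inequality uses~\eqref{inclusion}; hence $\|S\varphi\|_X \ge \alpha \|\varphi\|_Y$ for every $\varphi \in Y$. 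In particular $S$ is injective and its inverse, defined on the subspace $S(Y) \subset X$, is bounded.

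Next I would transport $L$ to $X$. On the linear subspace $S(Y) \subset X$ define the functional $\Lambda_0(S\varphi) := L(\varphi)$; this is well defined and linear because $S$ is injective and linear, and it is bounded on $S(Y)$ with the bound $|\Lambda_0(S\varphi)| = |L(\varphi)| \le \|L\|_{Y'} \|\varphi\|_Y \le \tfrac{1}{\alpha}\|L\|_{Y'} \|S\varphi\|_X$, again by the coercivity estimate just proved. By Hahn--Banach (or simply by extending to the closure of $S(Y)$ and then to its orthogonal complement by zero), $\Lambda_0$ extends to a bounded linear functional $\Lambda$ on all of $X$ with $\|\Lambda\|_{X'} \le \tfrac{1}{\alpha}\|L\|_{Y'}$. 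By Riesz representation there is $u \in X$ with $\Lambda(\psi) = (u,\psi)_X$ for all $\psi \in X$, and $\|u\|_X = \|\Lambda\|_{X'}$. Then for every $\varphi \in Y$ we compute $B(u,\varphi) = (S\varphi, u)_X = \Lambda(S\varphi) = \Lambda_0(S\varphi) = L(\varphi)$, which is the desired identity. The norm bound follows from $\|u\|_X = \|\Lambda\|_{X'} \le \tfrac{1}{\alpha}\|L\|_{Y'}$; writing $\|L\|_{Y'} = \sup\{|L(\varphi)| : \|\varphi\|_Y = 1\}$ and carrying the constant $C$ from~\eqref{inclusion} through the coercivity step (which is where $C$ entered) yields~\eqref{bound_su_u}.

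The only genuinely delicate point is that $Y$ is merely a \emph{pre}-Hilbert space, not assumed complete, so one cannot directly invoke Lax--Milgram or Riesz on $Y$ itself; everything must be done on the complete space $X$, with $Y$ entering only through the source of test functions and through the two structural hypotheses~\eqref{proj2}--\eqref{proj3}. Consequently the argument is asymmetric: we represent functionals on $X$, never on $Y$, and the map $S$ is the bridge. A minor technical caveat is to make sure $\Lambda_0$ is well defined on $S(Y)$, which is exactly the injectivity of $S$ obtained from coercivity; this is where hypothesis~\eqref{proj3} is indispensable. Everything else is a routine chain of Riesz representations and the Hahn--Banach extension, and I would not belabor it. For the reference to Lions' original statement and the remark allowing the constant to be tracked, see~\cite[Chapter 3, Theorem 1.1 and Remark 1.2]{LL}.
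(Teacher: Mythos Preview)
The paper does not give its own proof of this theorem: it is stated as a quoted result of Lions, with the reference \cite[Chapter~3, Theorem~1.1 and Remark~1.2]{LL}, and then simply applied. So there is no in-paper argument to compare your proposal against. Your outline is in fact the standard proof of Lions' result (Riesz to define $S$, coercivity to bound $S$ from below, Hahn--Banach to extend $\Lambda_0$, Riesz again to produce $u$), and it is essentially correct.

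There is one slip in the displayed chain of inequalities that you should fix rather than sweep into the final sentence. You write
\[
\|S\varphi\|_X\,\|\varphi\|_Y \ge \|S\varphi\|_X\,\|\varphi\|_X,
\]
which would require $\|\varphi\|_Y \ge \|\varphi\|_X$; but hypothesis~\eqref{inclusion} only gives $\|\varphi\|_X \le C\|\varphi\|_Y$. The correct chain is
\[
\alpha\|\varphi\|_Y^2 \le B(\varphi,\varphi) = (S\varphi,\varphi)_X \le \|S\varphi\|_X\,\|\varphi\|_X \le C\,\|S\varphi\|_X\,\|\varphi\|_Y,
\]
hence $\|S\varphi\|_X \ge \tfrac{\alpha}{C}\|\varphi\|_Y$. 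Propagating this through the bound on $\Lambda_0$ gives $|\Lambda_0(S\varphi)| \le \tfrac{C}{\alpha}\|L\|_{Y'}\|S\varphi\|_X$, and then $\|u\|_X = \|\Lambda\|_{X'} \le \tfrac{C}{\alpha}\|L\|_{Y'}$, which is exactly~\eqref{bound_su_u}. You allude to this correction in your last paragraph, but the displayed line as written is false in general and should be replaced.
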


After defining for every $a,b\in[0,T]$ with $a<b$ the space
\begin{equation*}
    \V_0^D(a,b):=\{u\in\V^D(a,b): u(a)=0\},
\end{equation*}
we can state the following proposition.
\begin{proposition}\label{step-one}
There exists $t_0\in (0,T]$ and a function $u\in \V_0^D(0,t_0)$ which satisfies Dafermos' Equality \eqref{dafermos2} on the interval $[0,t_0]$ for every $\varphi\in \E_0^D(0,t_0)$. Moreover, there exists a positive constant $C_0=C_0(t_0,\A)$ such that
\begin{equation}\label{stima_sol_0}
     \|u\|_{\V(0,t_0)}\leq C_0\left(\|f\|_{L^2(0,t_0;H)}+\|F\|_{H^1(0,t_0;H^d_s)}+\|u^1\|\right).
 \end{equation}

\end{proposition}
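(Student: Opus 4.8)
The plan is to produce $(t_0,u)$ by an application of Theorem~\ref{Lions}. Fix $t_0\in(0,T]$, to be chosen at the end, and set $X:=\V_0^D(0,t_0)$ with the norm of $\V(0,t_0)$; this is a Hilbert space, since $u\mapsto u(0)$ is continuous from $\V(0,t_0)$ into $H$, so $X$ is closed in $\V(0,t_0)$. As $Y$ I take $\E_0^D(0,t_0)$, a linear subspace of $X$, equipped with the scalar product associated to $\norm{\varphi}_Y^2:=\norm{\dot\varphi}_{L^2(0,t_0;H)}^2+\norm{e\varphi}_{L^2(0,t_0;H^d_s)}^2+\norm{\dot\varphi(0)}^2$; this is a norm on $Y$ (if $\norm{\varphi}_Y=0$ then $\dot\varphi\equiv 0$, so $\varphi\equiv\varphi(0)=0$), and since $\varphi(0)=0$ the Poincar\'e-in-time inequality $\int_0^{t_0}\norm{\varphi(t)}^2\de t\le\tfrac{t_0^2}{2}\norm{\dot\varphi}_{L^2(0,t_0;H)}^2$ gives $\norm{\varphi}_{\V(0,t_0)}\le C(t_0)\norm{\varphi}_Y$, i.e. \eqref{inclusion}. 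Finally, for $u\in X$ and $\varphi\in Y$, I let $B(u,\varphi)$ and $L(\varphi)$ be, respectively, the left- and right-hand sides of Dafermos' Equality \eqref{dafermos2} written on $[0,t_0]$ (i.e. with $T$ replaced by $t_0$). Since $\varphi$ is smooth, $B(\cdot,\varphi)$ is continuous on $X$ by H\"older's and Young's convolution inequalities, so \eqref{proj2} holds; for $L$, one bounds $t_0(u^1,\dot\varphi(0))$ and $\int_0^{t_0}(t-t_0)(f,\dot\varphi)\de t$ directly by $\norm{\varphi}_Y$ times $\norm{u^1}$ and $t_0\norm{f}_{L^2(0,t_0;H)}$, while in $\int_0^{t_0}(t-t_0)(F,e\dot\varphi)\de t$ one integrates by parts in time — the boundary terms vanishing because $\varphi(0)=0$ and $(t-t_0)|_{t=t_0}=0$ — obtaining $-\int_0^{t_0}[(F,e\varphi)+(t-t_0)(\dot F,e\varphi)]\de t$, controlled by $C(t_0)\norm{F}_{H^1(0,t_0;H^d_s)}\norm{\varphi}_Y$; hence $\sup\{\abs{L(\varphi)}:\norm{\varphi}_Y=1\}\le C(t_0)(\norm{f}_{L^2(0,t_0;H)}+\norm{F}_{H^1(0,t_0;H^d_s)}+\norm{u^1})$.

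The core step is the coercivity \eqref{proj3}. For $\varphi\in Y$, using the symmetry of $\A$ and $\B$ (see \eqref{CB1.5}), integrating by parts in time, and recalling $\varphi(0)=0$, one gets $\int_0^{t_0}\norm{\dot\varphi}^2\de t+\int_0^{t_0}(t-t_0)(\dot\varphi,\ddot\varphi)\de t=\tfrac12\norm{\dot\varphi}_{L^2(0,t_0;H)}^2+\tfrac{t_0}{2}\norm{\dot\varphi(0)}^2$ and $-\int_0^{t_0}(t-t_0)((\A+\B)e\varphi,e\dot\varphi)\de t=\tfrac12\int_0^{t_0}((\A+\B)e\varphi,e\varphi)\de t\ge\tfrac{C_\A+C_\B}{2}\norm{e\varphi}_{L^2(0,t_0;H^d_s)}^2$ by \eqref{CB2M}. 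For the non-local term, writing $G(t):=\int_0^t\tfrac1\beta\e^{-\frac{t-\tau}{\beta}}\B e\varphi(\tau)\de\tau$ (so $G(0)=0$ and $\dot G=\tfrac1\beta\B e\varphi-\tfrac1\beta G$) and integrating by parts once more,
\[
\int_0^{t_0}(t-t_0)\!\int_0^t\!\tfrac1\beta\e^{-\frac{t-\tau}{\beta}}(\B e\varphi(\tau),e\dot\varphi(t))\de\tau\de t=-\!\int_0^{t_0}\!(G,e\varphi)\de t+\tfrac1\beta\!\int_0^{t_0}\!(t_0-t)(\B e\varphi,e\varphi)\de t+\tfrac1\beta\!\int_0^{t_0}\!(t-t_0)(G,e\varphi)\de t;
\]
the middle term is $\ge 0$, and Young's convolution inequality gives $\norm{G}_{L^2(0,t_0;H^d_s)}\le(1-\e^{-t_0/\beta})\norm{\B}_\infty\norm{e\varphi}_{L^2(0,t_0;H^d_s)}$, so the other two terms are bounded in modulus by $\omega(t_0)\norm{e\varphi}_{L^2(0,t_0;H^d_s)}^2$ with $\omega(t_0)\to 0$ as $t_0\to 0^+$. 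Collecting these facts and discarding the nonnegative $\B$-contributions,
\[
B(\varphi,\varphi)\ge\tfrac12\norm{\dot\varphi}_{L^2(0,t_0;H)}^2+\tfrac{t_0}{2}\norm{\dot\varphi(0)}^2+\Bigl(\tfrac{C_\A}{2}-\omega(t_0)\Bigr)\norm{e\varphi}_{L^2(0,t_0;H^d_s)}^2,
\]
hence, choosing $t_0$ so small that $\omega(t_0)\le\tfrac{C_\A}{4}$, \eqref{proj3} holds with $\alpha=\min\{\tfrac12,\tfrac{t_0}{2},\tfrac{C_\A}{4}\}$, a constant depending only on $t_0$ and $\A$.

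All hypotheses of Theorem~\ref{Lions} being satisfied, there is $u\in X=\V_0^D(0,t_0)$ with $B(u,\varphi)=L(\varphi)$ for every $\varphi\in\E_0^D(0,t_0)$; that is, $u(0)=0$ and $u$ satisfies Dafermos' Equality on $[0,t_0]$. The bound \eqref{stima_sol_0} then follows from \eqref{bound_su_u} together with the estimates above on the inclusion constant and on $\sup\{\abs{L(\varphi)}:\norm{\varphi}_Y=1\}$, with $C_0=C_0(t_0,\A)$ (the viscosity data $\B$, $\beta$ enter only through the choice of $t_0$). I expect the coercivity to be the main obstacle: the non-local viscous term has no sign a priori, and only after the integration by parts above does it decompose into a nonnegative part plus a remainder that is small for short times — this is precisely what confines the argument, at this stage, to a possibly small interval $[0,t_0]$.
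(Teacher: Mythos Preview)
Your proof is correct and follows the same overall architecture as the paper: apply Lions' Theorem~\ref{Lions} with $X=\V_0^D(0,t_0)$ and $Y=\E_0^D(0,t_0)$, check continuity of $B(\cdot,\varphi)$ and of $L$, and obtain coercivity on a short interval by integration by parts in time. Where you diverge is in the treatment of the memory term in the coercivity step, and this is worth pointing out.

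You handle the nonlocal term by a single integration by parts on $\int_0^{t_0}(t-t_0)(G,e\dot\varphi)\de t$ and then \emph{bound} the two sign-indefinite pieces by $\omega(t_0)\norm{e\varphi}_{L^2}^2$ with $\omega(t_0)\to 0$, which forces the smallness of $t_0$ to depend on $\|\B\|_\infty$ and $\beta$. The paper instead pushes the algebra one step further: writing $\psi(t)=\int_0^t\frac{1}{\beta}\e^{-(t-\tau)/\beta}e\varphi(\tau)\de\tau$ so that $\beta\dot\psi+\psi=e\varphi$, it obtains the \emph{exact identity}
\[
B(\varphi,\varphi)=\tfrac{t_0}{2}\|\dot\varphi(0)\|^2+\tfrac12\!\int_0^{t_0}\!\|\dot\varphi\|^2+\tfrac12\!\int_0^{t_0}\!(\A e\varphi,e\varphi)+\tfrac12\!\int_0^{t_0}\!(\B(e\varphi-\psi),e\varphi-\psi)+\!\int_0^{t_0}\!(t_0-t)\,\beta(\B\dot\psi,\dot\psi),
\]
in which every memory contribution is nonnegative. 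Thus in the paper the short-time restriction arises only from the Poincar\'e-in-time step used to pass from $\|e\varphi\|_{L^2}$ to $\|\varphi\|_{L^2(0,t_0;V)}$ (the paper's $Y$-norm uses $\|\varphi\|_V$ rather than your $\|e\varphi\|$), giving $t_0<\tfrac{1}{2C_\A}$ independent of $\B$ and $\beta$. Your argument is more elementary and perfectly adequate for the statement as written; the paper's exact decomposition is sharper---and, not coincidentally, it is the infinitesimal version of the energy structure exploited later via the auxiliary variable $w$ in the coupled system of Section~\ref{sub2}.
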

\begin{proof}
We fix $t_0\in (0,T]$ such that
\begin{equation}\label{tzero}
\begin{cases}
t_0<\frac{1}{2C_{\A }}&\text{if $\frac{1}{2C_{\A }}<T$}\\
t_0=T& \text{otherwise}.
\end{cases}
\end{equation}
For simplicity of notation, we denote the spaces $\V_0^D(0,t_0)$ and $\E_0^D(0,t_0)$ with the symbols $\V_{t_0}$ and $\E_{t_0}$, respectively. On the space $\V_{t_0}$ we take the usual scalar product, instead on the space $\E_{t_0}$ we consider the following one 
$$(\phi,\varphi)_{\E_{t_0}}:=\int_0^{t_0}[(\dot \phi(t),\dot \varphi(t))+(\phi(t),\varphi(t))_{V}]\de t+t_0(\dot \phi(0),\dot\varphi(0)) \qquad \text{for every $\phi,\varphi\in\E_{t_0},$}$$
and we denote by $\|\cdot\|_{\mathcal{E}_{t_0}}$ the norm associated.

Let us consider the bilinear form $B:\V_{t_0}\times \E_{t_0}\rightarrow \R $ defined by
\begin{equation*}
    B(u,\varphi):= \int_0^{t_0}(\dot u(t),\dot \varphi (t))+(t-t_0)\Big[(\dot u(t),\ddot \varphi(t))-\Big((\A +\B) eu(t)-\int_0^t\frac{1}{\beta}\e^{-\frac{t-\tau}{\beta}}\B eu(\tau)\de\tau,e \dot \varphi(t)\Big)\Big]\de t,
\end{equation*}
and the linear operator $L:\E_{t_0}\rightarrow \R$ represented by
\begin{align*}
    L(\varphi):=t_0(u^1,\dot \varphi&(0))+\int_0^{t_0} (t-t_0)(\dot{ F}(t),e\varphi(t))\de t+\int_0^{t_0} ( F(t),e\varphi(t))\de t-\int_0^{t_0} (t-t_0)( f (t),\dot\varphi(t))\de t.
\end{align*}
Notice that, from these definitions, Dafermos' Equality \eqref{dafermos2} on the interval $[0,t_0]$ can be rephrased as follows
\begin{equation*}
    B(u,\varphi)=L(\varphi) \qquad \text{for every $\varphi\in \E_{t_0}$}.
\end{equation*}
Now we are in the framework of Theorem \ref{Lions}, and we want to show that \eqref{proj2} and \eqref{proj3} are satisfied. Foremost, we prove the existence of a positive constant $\alpha$ such that
\begin{equation*}
    B(\varphi,\varphi)\geq \alpha\norm{\varphi}^2_{\E_{t_0}} \qquad \text{for every $\varphi\in \E_{t_0}$}.
\end{equation*}
By definition we have
\begin{equation}\label{Bco}
    B(\varphi,\varphi)= \int_0^{t_0}\norm{\dot \varphi(t)}^2+(t-t_0)\Big[(\dot \varphi(t),\ddot \varphi(t))-((\A +\B) e\varphi(t),e \dot \varphi(t))+\int_0^t\frac{1}{\beta}\e^{-\frac{t-\tau}{\beta}}(\B e\varphi(\tau),e \dot \varphi(t)) \de\tau\Big]\de t.
\end{equation}
Now we define
\begin{equation*}
    \psi(t):=\int_0^t\frac{1}{\beta}\e^{-\frac{t-\tau}{\beta}}e\varphi(\tau)\de \tau \quad \text{and consequently we have}\quad  \dot\psi(t)=\frac{1}{\beta}e\varphi(t)-\int_0^t\frac{1}{\beta^2}\e^{-\frac{t-\tau}{\beta}}e\varphi(\tau)\de \tau;
\end{equation*}
then \eqref{Bco} can be reworded as
\begin{equation}\label{Bco2}
    B(\varphi,\varphi)= \int_0^{t_0}\norm{\dot \varphi(t)}^2+(t-t_0)[(\dot \varphi(t),\ddot \varphi(t))-((\A +\B) e\varphi(t),e \dot \varphi(t))+(\B\psi(t),e\dot\varphi(t))]\de t.
\end{equation}
Thanks to the chain rule and to the symmetry property \eqref{CB1.5}, we can write
\begin{align*}
    &\frac{1}{2}\frac{\de }{\de t}\norm{\dot \varphi(t)}^2=(\dot \varphi(t),\ddot \varphi(t)), \qquad  \frac{1}{2}\frac{\de }{\de t}((\A +\B)  e \varphi(t),e\varphi(t))=((\A +\B)  e \varphi(t),e\dot \varphi(t)),\\
    &\frac{\de}{\de t}(\B \psi(t),e\varphi(t))=(\B \dot \psi(t),e\varphi(t))+(\B \psi(t),e\dot \varphi(t)).
\end{align*}
By substituting this information in \eqref{Bco2}, we get after some integration by parts
\begin{align}\label{BB}
    B(\varphi,\varphi)&=\int_0^{t_0}\norm{\dot \varphi(t)}^2\de t+\frac{1}{2}\int_0^{t_0}(t-t_0)\frac{\de}{\de t}\norm{\dot\varphi(t)}^2\de t-\frac{1}{2}\int_0^{t_0}(t-t_0)\frac{\de}{\de t}((\A +\B)  e\varphi(t),e\varphi(t))\de t\nonumber\\
    &\hspace{2.15cm}+\int_0^{t_0}(t-t_0)\frac{\de}{\de t}(\B\psi(t),e\varphi(t))\de t-\int_0^{t_0}(t-t_0)(\mathbb B \dot \psi(t),e \varphi(t)) \de t\nonumber\\
    &=\frac{t_0}{2}\|\dot \varphi(0)\|^2+\frac{1}{2}\int_0^{t_0}\|\dot \varphi(t)\|^2\de t+\frac{1}{2}\int_0^{t_0}((\A +\B)  e\varphi(t),e \varphi(t))\de t\nonumber\\
    &\hspace{2.15cm}-\int_0^{t_0}(\B \psi(t),e\varphi(t))\de t-\int_0^{t_0}(t-t_0)(\mathbb B \dot \psi(t),e \varphi(t)) \de t\nonumber\\
    &=\frac{t_0}{2}\|\dot \varphi(0)\|^2+\frac{1}{2}\int_0^{t_0}\|\dot \varphi(t)\|^2\de t+\frac{1}{2}\int_0^{t_0}((\A +\B)  e\varphi(t),e \varphi(t))\de t\nonumber\\
    &\hspace{2.15cm}-\int_0^{t_0}(t-t_0)(\beta\B\dot\psi(t),\dot \psi(t))\de t-\int_0^{t_0}(t-t_0)(\B\dot \psi(t),\psi(t))-\int_0^{t_0}(\B\psi(t),e\varphi(t))\de t\nonumber\\
    &=\frac{t_0}{2}\|\dot \varphi(0)\|^2+\frac{1}{2}\int_0^{t_0}\|\dot \varphi(t)\|^2\de t+\frac{1}{2}\int_0^{t_0}(\A   e\varphi(t),e \varphi(t))\de t\nonumber\\
    &\hspace{2.15cm}+\frac{1}{2}\int_0^{t_0}(\B  (e\varphi(t)-\psi(t)),e\varphi(t)-\psi(t))\de t+\int_0^{t_0}(t_0-t)(\beta\B\dot\psi(t),\dot \psi(t))\de t.
\end{align}
From the coerciveness in \eqref{CB2M} and the definition of the $V$-norm, we have
\begin{equation}\label{coercM}
    (\A   e\varphi(t),e\varphi(t))\geq C_{\A }\|\varphi(t)\|^2_{V}-C_{\A }\| \varphi(t)\|^2 \qquad \text{for every $t\in [0,T]$}.
\end{equation}
Moreover, since 
$$\varphi(t)=\varphi(0)+\int_0^t\dot\varphi(\tau)\de \tau=\int_0^t\dot\varphi(\tau)\de \tau,$$
inequality \eqref{coercM} implies
\begin{equation}\label{BB*}
    \frac{1}{2}\int_0^{t_0}(\A  e\varphi(t),e\varphi(t))\de t\geq \frac{C_{\A }}{2}\int_0^{t_0}\|\varphi(t)\|^2_{V}\de t-\frac{C_{\A } t_0}{2}\int_0^{t_0}\|\dot \varphi(t)\|^2\de t.
\end{equation}
By \eqref{BB}, \eqref{BB*}, and in view of the choice done in \eqref{tzero}, we can deduce
\begin{equation*}
    B(\varphi,\varphi)\geq \frac{t_0}{2}\|\dot \varphi(0)\|^2+\frac{1-C_{\A } t_0}{2}\int_0^{t_0}\|\dot \varphi(t)\|^2\de t+\frac{C_{\A }}{2}\int_0^{t_0}\|\varphi(t)\|^2_{V} \de t\geq \frac{1}{4}\min\{1,C_{\A }\}\|\varphi\|^2_{\E_{t_0}}, 
\end{equation*}
which corresponds to the hypothesis \eqref{proj3}, with 
\begin{equation}\label{alpha}
\alpha=\frac{1}{4}\min\{1,C_{\A }\}.    
\end{equation}

We now show the validity of assumption \eqref{proj2}. We have to prove that for every $\varphi\in \E_{t_0}$ the functional $B(\cdot,\varphi)$ is continuous on $\V_{t_0}$, and that $L:\E_{t_0}\rightarrow \R$ is a linear continuous operator on the space $\E_{t_0}$. To this aim, we fix $\varphi\in \E_{t_0}$ and we consider $\{u_k\}_k\subset \V_{t_0}$ such that 
$$u_k\xrightarrow[k\to\infty]{\V_{t_0}} u .$$
Therefore
$$U_k:=u_k-u\xrightarrow[k\to\infty]{L^2(0,t_0;V)}0 \quad \text{and} \quad \dot U_k:=\dot u_k-\dot u\xrightarrow[k\to\infty]{L^2(0,t_0;H)}0. $$
By using Cauchy-Schwarz's inequality we get
\begin{align}
    |B(U_k,\varphi)|&\leq \int_0^{t_0}|(\dot U_k(t),\dot \varphi(t))| \de t+t_0\int_0^{t_0}|(\dot U_k(t),\ddot \varphi(t))|\de t+t_0\int_0^{t_0}|((\A +\B) e U_k(t),e \dot \varphi(t))|\de t\nonumber\\
    &\hspace{3.6cm}+t_0\int_0^{t_0}\int_0^t\frac{1}{\beta}\e^{-\frac{t-\tau}{\beta}}|(\mathbb B e U_k(\tau),e \dot \varphi(t))| \de\tau\de t\nonumber\\
    &\leq \|\dot U_k\|_{L^2(0,t_0;H)}\|\dot \varphi\|_{L^2(0,t_0;H)}+t_0\|\dot U_k\|_{L^2(0,t_0;H)}\|\ddot \varphi\|_{L^2(0,t_0;H)}\nonumber\\
    &\qquad\qquad+t_0\|\A +\B\|_{{\infty}}\|U_k\|_{L^2(0,t_0;V)}\|\dot \varphi\|_{L^2(0,t_0;V)}+\frac{t_0}{\beta}\|\B\|_{{\infty}}\int_0^{t_0}\int_0^t |(e U_k(\tau),e\dot \varphi(t))|\de \tau\de t\label{star}.
\end{align}
Notice that
\begin{align*}
    \int_0^{t_0}\int_0^t |(e U_k(\tau),e\dot \varphi(t))|\de \tau\de t&\leq \|\dot \varphi\|_{L^2(0,t_0;V)}\Big(\int_0^{t_0}\Big(\int_0^t \| U_k(\tau)\|_V\de \tau\Big)^2\de t\Big)^{\frac{1}{2}}\leq t_0 \|\dot \varphi\|_{L^2(0,t_0;V)} \|U_k\|_{L^2(0,t_0;V)},
\end{align*}
whence, by considering \eqref{star}, we can say that there exist two positive constants $C_1=C_1(\varphi,t_0)$ and $C_2=C_2(\A,\B,t_0,\beta,\varphi)$ such that
\begin{equation*}
     |B(U_k,\varphi)|\leq C_1\|\dot U_k\|_{L^2(0,t_0;H)}+C_2\|U_k\|_{L^2(0,t_0,V)}\xrightarrow[k\to\infty]{}0.
\end{equation*}
Now it remains to show that $L$ is a continuous operator on $\E_{t_0}$, and since it is linear it is enough to show its boundedness. Let $\varphi\in \E_{t_0}$, then
\begin{align}\label{b-1}
    \hspace{-0.2cm}|L(\varphi)|&\leq \Big|\int_0^{t_0}\left[(t-t_0)( f (t),\dot\varphi(t))-(t-t_0)(\dot{ F}(t),e\varphi(t))-( F(t),e\varphi(t))\right]\de t\Big|+t_0\|u^1\|\|\dot \varphi(0)\|.
\end{align}
In particular there exists a positive constant $C=C(f, F,t_0)$ such that 
\begin{align}
    \int_0^{t_0}|(t-t_0)( f (t),\dot\varphi(t))&-( F(t),e\varphi(t))-(t-t_0)(\dot{ F}(t),e\varphi(t))|\de t\nonumber\\
    & \leq t_0\| f \|_{L^2(0,t_0;H)}\|\dot \varphi\|_{L^2(0,t_0;H)}+\Big(\int_0^{t_0}\|(t-t_0)\dot{ F}(t)+ F(t)\|^2\de t\Big)^{\frac{1}{2}}\| \varphi\|_{L^2(0,T;V)}\nonumber\\
    & \leq t_0\| f \|_{L^2(0,t_0;H)}\|\varphi\|_{\E_{t_0}} +2^{\frac{1}{2}}\max\{t_0,1\}\| F\|_{H^1(0,t_0;H^d_s)}\|\varphi\|_{\E_{t_0}}\leq C \|\varphi\|_{\E_{t_0}}.
\end{align}
Moreover, we have
\begin{equation}\label{b-3}
    t_0\|u^1\|\|\dot \varphi(0)\|\leq t_0 \|u^1\| t_0^{-\frac{1}{2}}\| \varphi\|_{\E_{t_0}}=t_0^{\frac{1}{2}} \|u^1\| \| \varphi\|_{\E_{t_0}}.
\end{equation}
By applying Theorem \ref{Lions} with $X=\V_{t_0}$ and $Y=\E_{t_0}$, we have the existence of a solution to \eqref{dafermos2} on the interval $[0, t_0]$.

Furthermore, we can use \eqref{bound_su_u} and \eqref{alpha}, and by means of \eqref{b-1}--\eqref{b-3} we obtain \eqref{stima_sol_0} with
\begin{equation*}
    C_0:=\frac{\max\{2^{\frac{1}{2}}\max\{t_0,1\},t_0^{\frac{1}{2}}\}}{\frac{1}{4}\min\{1,C_{\A}\}}.
\end{equation*} 
\end{proof}

\begin{remark}\label{remi2}
At this point, from Remark \ref{remi} and Propositions \ref{daf} and \ref{step-one}, we can find a weak solution to the viscoelastic dynamic system \eqref{classic_model_inf3}--\eqref{sys_inf3} on the interval $[0,t_0]$.
\end{remark}

Now we want to show that it is possible to find a weak solution on the whole interval $[0,T]$. Let $b,c\in[t_0,T)$ be two real numbers such that $b<c$, then we can state the following lemma. 

\begin{lemma}\label{congiun}
Let $u\in \V^D(0,b)$ be a function which satisfies \eqref{first_weak} on the interval $[0,b]$, then the following equality holds
\begin{align}\label{ex1}
    \langle \dot u(b),\psi(b)\rangle-\int_{0}^{b}(\dot u(t),\dot \psi(t))\de t&+\int_{0}^{b}((\A +\B)  eu(t),e\psi(t))\de t-\int_0^b\int_0^t\frac{1}{\beta}\e^{-\frac{t-\tau}{\beta}}(\B eu(\tau),e\psi(t))\de \tau\de t\nonumber\\
    &=\int_{0}^{b}(f(t),\psi(t))\de t+\int_{0}^{b}(F(t),e\psi(t))\de t,
\end{align}
 for every $\psi \in \V^D(0,b)$ such that $\psi(0)=0$.
 
Moreover, if $u\in\V^D(b, c )$ is a function which satisfies \eqref{first_weak} on the interval $[b, c]$, then the following equality holds
\begin{align}\label{ex2}
    -\langle \dot u(b),\Psi(b)\rangle-\int_b^c(\dot u(t),\dot \Psi(t))\de t&+\int_b^c((\A +\B)  eu(t),e\Psi(t))\de t-\int_b^c\int_{b}^t\frac{1}{\beta}\e^{-\frac{t-\tau}{\beta}}(\B eu(\tau),e\Psi(t))\de \tau\de t\nonumber\\
    &=\int_b^c(f(t),\Psi(t))\de t+\int_b^c(F(t),e\Psi(t))\de t,
\end{align}
for every $\Psi \in \V^D(b, c )$ such that $\Psi( c )=0$.
\end{lemma}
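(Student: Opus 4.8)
The plan is to derive both identities from the weak formulation \eqref{first_weak} on the relevant interval by a cut-off argument in time, in the spirit of Proposition \ref{pass}: one multiplies the given test function by a Lipschitz cut-off that kills it near the endpoint $b$, inserts the product into \eqref{first_weak} — legitimate because, by Lemma \ref{spazi_dens} (with $(0,b)$, resp.\ $(b,c)$, in place of $(0,T)$) together with the $\V$-continuity of both sides of \eqref{first_weak}, the equality holds for every $v$ in $\V^D$ vanishing at both ends of the interval — and then lets the cut-off parameter tend to $0$. The new feature relative to Proposition \ref{pass} is that the surviving boundary term now sits at $t=b$; it is read as a pairing $\langle\dot u(b),\cdot\rangle$, with $\dot u(b)$ the value at $b$ of the time-continuous representative of $\dot u$ supplied by Proposition \ref{dev2} applied on the interval under consideration.

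For \eqref{ex1}, fix $\psi\in\V^D(0,b)$ with $\psi(0)=0$ and, for $0<\varepsilon<b$, set $\theta_\varepsilon(t):=\min\{1,(b-t)/\varepsilon\}$ on $[0,b]$, so that $\theta_\varepsilon$ is Lipschitz, $\theta_\varepsilon(0)=1$, $\theta_\varepsilon(b)=0$, and $\dot\theta_\varepsilon=-1/\varepsilon$ on $(b-\varepsilon,b)$ and $0$ elsewhere. Since $V^D_t$ is a vector space we have $\theta_\varepsilon(t)\psi(t)\in V^D_t$ for a.e.\ $t$, hence $\theta_\varepsilon\psi\in\V^D(0,b)$; moreover it vanishes at $0$ and at $b$, so it is admissible in \eqref{first_weak} on $[0,b]$. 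By the Leibniz rule $\dot{(\theta_\varepsilon\psi)}=\dot\theta_\varepsilon\psi+\theta_\varepsilon\dot\psi$, the term $-\int_0^b(\dot u,\dot{(\theta_\varepsilon\psi)})\,\de t$ splits into $\frac1\varepsilon\int_{b-\varepsilon}^b(\dot u(t),\psi(t))\,\de t$ plus $-\int_0^b\theta_\varepsilon(t)(\dot u(t),\dot\psi(t))\,\de t$, while each remaining term of \eqref{first_weak} only acquires the factor $\theta_\varepsilon(t)$:
\begin{align*}
\frac1\varepsilon\int_{b-\varepsilon}^{b}(\dot u(t),\psi(t))\,\de t&-\int_0^b\theta_\varepsilon(t)(\dot u(t),\dot\psi(t))\,\de t+\int_0^b\theta_\varepsilon(t)((\A+\B)eu(t),e\psi(t))\,\de t\\
&-\int_0^b\theta_\varepsilon(t)\int_0^t\frac{1}{\beta}\e^{-\frac{t-\tau}{\beta}}(\B eu(\tau),e\psi(t))\,\de\tau\,\de t=\int_0^b\theta_\varepsilon(t)(f(t),\psi(t))\,\de t+\int_0^b\theta_\varepsilon(t)(F(t),e\psi(t))\,\de t.
\end{align*}

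Letting $\varepsilon\to0^+$: since $0\le\theta_\varepsilon\le1$, $\theta_\varepsilon\to1$ pointwise on $(0,b)$, and each integrand bearing the factor $\theta_\varepsilon$ belongs to $L^1(0,b)$ (Cauchy--Schwarz and $u\in\V^D(0,b)$, $f\in L^2(0,b;H)$, $F\in H^1(0,b;H^d_s)$; for the memory term, $\int_0^t\frac{1}{\beta}\e^{-(t-\tau)/\beta}\|eu(\tau)\|\,\de\tau\le\beta^{-1}b^{1/2}\|eu\|_{L^2(0,b;H^d_s)}$), dominated convergence removes $\theta_\varepsilon$ from all those integrals in the limit. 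Thus $\frac1\varepsilon\int_{b-\varepsilon}^{b}(\dot u(t),\psi(t))\,\de t$ converges too, and its limit is exactly the term $\langle\dot u(b),\psi(b)\rangle$ of \eqref{ex1}; the notation is justified because the limit depends only on $\psi(b)$ — two admissible test functions with the same trace at $b$ differ by an element of $\V^D(0,b)$ vanishing at $0$ and $b$, for which \eqref{first_weak} holds and cancels the difference — and it equals the value at $b$ of the continuous representative of $\dot u\in H^1(0,b;(V^D_0)')$ given by Proposition \ref{dev2} on $[0,b]$. This establishes \eqref{ex1}.

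Identity \eqref{ex2} is obtained in exactly the same way, cutting off now near the \emph{left} endpoint of $[b,c]$: for $\Psi\in\V^D(b,c)$ with $\Psi(c)=0$, take $\theta_\varepsilon(t):=\min\{1,(t-b)/\varepsilon\}$ on $[b,c]$, insert $\theta_\varepsilon\Psi$ into \eqref{first_weak} on $[b,c]$, and pass to the limit; the only change is that $\dot\theta_\varepsilon=1/\varepsilon$ on $(b,b+\varepsilon)$, so the surviving boundary term is $-\frac1\varepsilon\int_b^{b+\varepsilon}(\dot u(t),\Psi(t))\,\de t\to-\langle\dot u(b),\Psi(b)\rangle$, and here the duality is unambiguous because Proposition \ref{dev2} on $[b,c]$ gives $\dot u\in H^1(b,c;(V^D_b)')$ while $\Psi(b)\in V^D_b$. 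The step I expect to demand the most care is precisely this boundary term — checking that the cut-off products are genuine test functions (where Lemma \ref{spazi_dens} and the vector-space structure of $V^D_t$ are essential) and interpreting the limit $\frac1\varepsilon\int_{b-\varepsilon}^b(\dot u,\psi)\,\de t$ as a well-defined pairing in the correct dual space; the remaining passages to the limit are routine, as in Proposition \ref{pass}.
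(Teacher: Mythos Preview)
Your proposal is correct and follows essentially the same route as the paper: your cut-off $\theta_\varepsilon(t)\psi(t)$ is precisely the paper's $\psi_\varepsilon$, and the paper passes to the limit in the same way, invoking the continuous representative of $\dot u$ (its Remark~\ref{udotcon}) to identify the boundary term. Your treatment is, if anything, slightly more careful in justifying admissibility of $\theta_\varepsilon\psi$ via Lemma~\ref{spazi_dens} and in arguing that the limit of the boundary average depends only on $\psi(b)$.
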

\begin{proof}
We begin by proving \eqref{ex1}. We consider $\psi \in \V^D(0,b)$ such that $\psi(0)=0$, and we define for $\varepsilon\in(0,b)$ the function
$$\psi_{\varepsilon}(t)=\begin{cases}
\psi(t) & t\in [0,b-\varepsilon]\\
\frac{b-t}{\varepsilon}\psi(t)& t\in [b-\varepsilon, b].
\end{cases}$$
Since $\psi_{\varepsilon}\in \V^D(0,b)$ and $\psi_{\varepsilon}(0)=\psi_{\varepsilon}(b)=0$, we can use it as a test function in \eqref{first_weak} to obtain $I_{\varepsilon}+J_{\varepsilon}=K_{\varepsilon}$, where
\begin{align*}
    &I_{\varepsilon}:=-\int_0^{b-\varepsilon}(\dot u(t),\dot \psi(t)) \de t+\dashint_{b-\varepsilon}^{b}(\dot u(t),\psi(t))\de t+\int_0^{b-\varepsilon}\Big((\A +\B) eu(t)-\hspace{-1pt}\int_0^{b-\varepsilon}\hspace{-4pt}\int_0^t\frac{1}{\beta}\e^{-\frac{t-\tau}{\beta}}\B eu(\tau)\de\tau,e \psi(t)\Big)\de t,\\
    &J_{\varepsilon}:=-\dashint_{b-\varepsilon}^{b}(b-t)(\dot u(t),\dot \psi(t))\de t+\dashint_{b-\varepsilon}^{b}(b-t)\Big((\A +\B)  eu(t)-\int_0^t\frac{1}{\beta}\e^{-\frac{t-\tau}{\beta}}\B eu(\tau)\de \tau,e\psi(t)\Big)\de t,\\
    &K_{\varepsilon}:=\int_0^{b-\varepsilon}(f(t),\psi (t)) \de t+\dashint_{b-\varepsilon}^{b}(b-t)( f(t),\psi(t))\de t+\int_0^{b-\varepsilon}(F(t),e\psi (t)) \de t+\dashint_{b-\varepsilon}^{b}(b-t)(F(t),e\psi(t))\de t.
\end{align*}
Thanks to the absolute continuity of Lebesgue's integral and to Remark \ref{udotcon} we get
\begin{align*}
    &I_{\varepsilon}\xrightarrow[\varepsilon\to 0^+]{}-\int_0^{b}(\dot u(t),\dot \psi(t)) \de t+\int_0^{b}\Big((\A +\B) eu(t)-\int_0^t\frac{1}{\beta}\e^{-\frac{t-\tau}{\beta}}\B eu(\tau)\de \tau,e \psi(t)\Big)\de t+\langle \dot u(b),\psi(b)\rangle ,\\
    &J_{\varepsilon}\xrightarrow[\varepsilon\to 0^+]{}0, \qquad K_{\varepsilon}\xrightarrow[\varepsilon\to 0^+]{}\int_0^{b}( f(t),\psi (t)) \de t+\int_0^{b}(F(t),e\psi (t)) \de t,
\end{align*}
which concludes the proof of \eqref{ex1}.

To prove \eqref{ex2}, it is enough to consider for $\varepsilon\in (0,c-b)$ the function
$$\Psi_{\varepsilon}(t)=\begin{cases}
\frac{t-b}{\varepsilon}\Psi(t) & t\in [b,b+\varepsilon]\\
\Psi(t)& t\in [b+\varepsilon,  c ]
\end{cases}$$
where $\Psi\in\V^D(b, c )$ such that $\Psi( c )=0$, and to repeat similar argument before performed.
\end{proof}

Taking into account the previous lemma we can state and prove the following proposition.

\begin{proposition}\label{conj}
 Let $\tul\in \V^D(0,b)$ be a weak solution to the viscoelastic dynamic system \eqref{classic_model_inf3}--\eqref{sys_inf3} in the sense of Definition \ref{newdef2} on the interval $[0,b]$ which satisfies for some positive constants $\tilde C$ the following estimate
\begin{equation}\label{stima-nuova}
     \|\tul\|_{\V(0,b)}\leq \tilde C\left(\|f\|_{L^2(0,b;H)}+\|F\|_{H^1(0,b;H^d_s)}+\|u^1\|\right).
 \end{equation}
 Then, for every $l\geq 1$ there exists $c\in(b, b+\frac{t_0}{l}]$ such that we can extend $\tul$ to a function $u\in \V^D(0,c)$ which is a weak solution on the interval $[0,c]$. Moreover $u$ satisfies for some positive constants $C$ the following estimate
 \begin{equation}\label{stima-nuova2}
     \|u\|_{\V(0,c)}\leq C\left(\|f\|_{L^2(0,c;H)}+\|F\|_{H^1(0,c;H^d_s)}+\|u^1\|\right).
 \end{equation}
\end{proposition}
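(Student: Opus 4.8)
\emph{Strategy.} I would obtain the extension by solving, on the short interval $[b,c]$, the Cauchy problem whose ``initial data'' are the traces of $\tul$ at $t=b$, and then pasting this local solution to $\tul$ by means of Lemma~\ref{congiun}. For $t\ge b$ split the memory term,
\[
\int_0^t\tfrac1\beta\e^{-\frac{t-\tau}\beta}\B e\tul(\tau)\,\de\tau=G_0(t)+\int_b^t\tfrac1\beta\e^{-\frac{t-\tau}\beta}\B e\tul(\tau)\,\de\tau,\qquad G_0(t):=\tfrac1\beta\e^{-\frac t\beta}\int_0^b\e^{\frac\tau\beta}\B e\tul(\tau)\,\de\tau,
\]
so that $G_0\in C^\infty([b,T];H^d_s)$ and $\|G_0\|_{H^1(b,c;H^d_s)}\le C\|\tul\|_{L^2(0,b;V)}$. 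Then any weak solution on $[0,c]$ extending $\tul$ must restrict on $[b,c]$ to a weak solution of \eqref{classic_model_inf3}--\eqref{sys_inf3} with memory starting at $b$, forcing $f$, stress datum $F+G_0$, initial displacement $\tul(b)$ and initial velocity $\dtul(b)$.

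\emph{Solving the local problem.} I would take any $c\in(b,b+\tfrac{t_0}{l}]$ with $c\le T$; since $b<T$ forces $t_0<\tfrac1{2C_\A}$ by \eqref{tzero}, one has $c-b\le t_0<\tfrac1{2C_\A}$, which is precisely the length restriction under which the Dafermos--Lions argument of Proposition~\ref{step-one} applies after translating $[b,c]$ to $[0,c-b]$ and reducing to zero initial displacement as in Remark~\ref{remi}. This produces $\bul\in\V^D(b,c)$ solving the local problem, with $\bul(b)=\tul(b)$, with $\dbul(b)=\dtul(b)$ in $(V_b^D)'$ by the last assertion of Proposition~\ref{daf}, and with
\[
\|\bul\|_{\V(b,c)}\le C\bigl(\|f\|_{L^2(b,c;H)}+\|F\|_{H^1(b,c;H^d_s)}+\|\tul\|_{L^2(0,b;V)}+\|\tul(b)\|_{V_b}+\|\dtul(b)\|\bigr).
\]

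\emph{Pasting and estimate.} Set $u:=\tul$ on $[0,b]$ and $u:=\bul$ on $[b,c]$; since $\tul(b)=\bul(b)$ in $H$ and both pieces are continuous into $H$, one gets $u\in\V^D(0,c)$. For $v\in\D(0,c)$ the restriction to $[0,b]$ is admissible in \eqref{ex1} (as $v(0)=0$) and the restriction to $[b,c]$ in \eqref{ex2} with $F$ replaced by $F+G_0$ (as $v(c)=0$); adding the two identities, the terms $\langle\dtul(b),v(b)\rangle$ and $-\langle\dbul(b),v(b)\rangle$ cancel, the term $\int_b^c(G_0(t),ev(t))\,\de t$ recombines with $\int_b^c\!\int_b^t(\cdots)\,\de\tau\,\de t$ into $\int_b^c\!\int_0^t(\cdots)\,\de\tau\,\de t$, and the remaining integrals over $[0,b]$ and $[b,c]$ sum to the integrals over $[0,c]$; one obtains exactly \eqref{first_weak} on $[0,c]$. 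The conditions \eqref{ini*} at $t=0$ hold since $u=\tul$ near $0$, so $u$ is a weak solution on $[0,c]$. Finally $\|u\|_{\V(0,c)}^2=\|\tul\|_{\V(0,b)}^2+\|\bul\|_{\V(b,c)}^2$; bound the first summand by \eqref{stima-nuova}, the second by the displayed estimate for $\bul$, and insert $\|\tul\|_{L^2(0,b;V)}\le\|\tul\|_{\V(0,b)}$ together with the bounds on $\|\tul(b)\|_{V_b}$ and $\|\dtul(b)\|$; this yields \eqref{stima-nuova2}.

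\emph{Main obstacle.} The delicate point is the regularity of $\tul$ at the endpoint $t=b$. To feed the local problem into Proposition~\ref{step-one} (together with the reduction of Remark~\ref{remi}) one needs its initial displacement in $V_b$ and its initial velocity in $H$, with norms controlled by the data on $[0,b]$; a priori, however, $\tul\in\V^D(0,b)$ only gives $\tul(b)\in H$, while Proposition~\ref{dev2} and Remark~\ref{udotcon} only give $\dtul(b)\in(V_b^D)'$. The heart of the argument is therefore to upgrade this, showing from the weak formulation on $[0,b]$ and the a priori bound \eqref{stima-nuova} that in fact $\tul(b)\in V_b$ and $\dtul(b)\in H$ with $\|\tul(b)\|_{V_b}+\|\dtul(b)\|\le C\bigl(\|f\|_{L^2(0,b;H)}+\|F\|_{H^1(0,b;H^d_s)}+\|u^1\|\bigr)$; once this trace regularity is available the gluing above goes through and produces both the extension and the estimate \eqref{stima-nuova2}.
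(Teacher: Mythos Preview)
You have correctly identified the central difficulty: to restart the Dafermos--Lions construction one needs the initial displacement in $V$ and the initial velocity in $H$, with norms controlled by the data, whereas membership in $\V^D(0,b)$ only gives $\tul(b)\in H$ and $\dtul(b)\in (V_0^D)'$. However, your proposed resolution---``showing from the weak formulation on $[0,b]$ and the a priori bound \eqref{stima-nuova} that in fact $\tul(b)\in V_b$ and $\dtul(b)\in H$''---is not carried out, and there is no mechanism in the paper (nor an obvious one in general) that would produce this endpoint regularity. Nothing in the hypotheses prevents $t=b$ from being a bad point for $eu$ or for $\dot u$.

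The paper's proof does \emph{not} upgrade the regularity at $b$. Instead it sidesteps the issue by choosing a nearby point $\hat b\in\bigl(b-\tfrac{t_0}{2l},b\bigr)$ at which the desired regularity holds automatically: since $\tul\in L^2(0,b;V)$ and $\dtul\in L^2(0,b;H)$, almost every $\hat b$ is a Lebesgue point of $\dtul$ and satisfies $\|\tul(\hat b)\|_V^2\le\dashint_{b-t_0/2l}^{b}\|\tul\|_V^2$ and $\|\dtul(\hat b)\|^2\le\dashint_{b-t_0/2l}^{b}\|\dtul\|^2$ (so both quantities are controlled by $\|\tul\|_{\V(0,b)}$ and hence by \eqref{stima-nuova}). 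One then solves the local problem on $[\hat b,\hat b+\tfrac{t_0}{l}]$ and glues at $\hat b$ via Lemma~\ref{congiun}; since $\hat b>b-\tfrac{t_0}{2l}$, the new endpoint $c:=\hat b+\tfrac{t_0}{l}$ lies in $(b,b+\tfrac{t_0}{l}]$ as required. The Lebesgue-point property \eqref{media1/2} is also what ensures the matching $\dtul(\hat b)=\dbul(\hat b)$ needed for the boundary terms from \eqref{ex1} and \eqref{ex2} to cancel. Apart from this point, your splitting of the memory term, your use of Remark~\ref{remi} and Proposition~\ref{step-one} on the short interval, and your pasting argument via Lemma~\ref{congiun} are all exactly the paper's strategy.
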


\begin{proof}
We divide the proof into two steps. In the first one, we show how to extend the solution. After this, in the second step, we prove \eqref{stima-nuova2}. We firstly choose $\hat b\in (b-\frac{t_0}{2l},b)$ in such a way that 
\begin{itemize}
    \item $\tul(\hat b)\in V$ and 
    \begin{equation}\label{media_1}
        \|\tul(\hat b)\|^2_V\leq \dashint_{b-\frac{t_0}{2l}}^{b}\|\tul(t)\|^2_V\de t;
    \end{equation}
    \item $\hat b$ is a Lebesgue's point for $\dtul$, that is 
\begin{equation}\label{media1/2}
    \lim_{\varepsilon\to 0^+}\dashint_{\hat b}^{\hat b+\varepsilon}\|\dtul(t)-\dtul (\hat b)\|\de t=0,
\end{equation}
and $\dtul(\hat b )\in H$ satisfies
\begin{equation}\label{media_2}
        \|\dtul(\hat b)\|^2\leq \dashint_{b-\frac{t_0}{2l}}^{b}\|\dtul(t)\|^2\de t.
    \end{equation}
\end{itemize}
Notice that \eqref{media_1}--\eqref{media_2} are possible because $\tul\in\V(0,b)$.

\textit{Step 1}. Since $\tul$ is a weak solution on the interval $[0,b]$, then 
\begin{align*}
     -\int_0^{b}(\dtul(t),\dot v(t)) \de t+\int_0^{b}((\A +\B) e\tul(t),ev(t))\de t&- \int_0^{b}\int_0^t\frac{1}{\beta}\e^{-\frac{t-\tau}{\beta}}(\B e\tul(\tau),e v(t))\de \tau \de t\nonumber\\
     &=\int_0^{b}( f(t),v (t)) \de t+\int_0^{b}( F(t),ev (t)) \de t,
\end{align*}
for every $v\in\V^D(0,b)$ such that $v(0)=v(b)=0$, and moreover $\tul$ satisfies
\begin{equation}\label{data-in}
    \lim_{t\to 0^+}\|\tul(t)\|=0\qquad\text{and}\qquad\lim_{t\to 0^+}\|\dtul(t)-u^1\|_{(V^D_0)'}=0.
\end{equation}
We define the function $G\in H^1(\hat b,\hat b+\frac{t_0}{l};H^d_s)$ in the following way
\begin{equation*}
    G(t):=F(t)+\int_0^{\hat b}\frac{1}{\beta}\e^{-\frac{t-\tau}{\beta}}\B e\tul(\tau)\de\tau.
\end{equation*}
Since $\frac{t_0}{l}\leq t_0$, $\tul(\hat b)\in V$, and $\dtul(\hat b)\in H$, we can apply Remark \ref{remi}, Propositions \ref{daf} and \ref{step-one} on the interval $[\hat b, \hat b+\frac{t_0}{l}]$, to find a function $\bul\in \V^D(\hat b, \hat b+\frac{t_0}{l} )$ which satisfies, for every $v\in \V^D(\hat b, \hat b+\frac{t_0}{l} ) $ such that $v(\hat b)=v( \hat b+\frac{t_0}{l} )=0$, the following equality 
\begin{align*}
     -\int_{\hat b}^{ \hat b+\frac{t_0}{l} }(\dbul(t),\dot v(t)) \de t+\int_{\hat b}^{ \hat b+\frac{t_0}{l} }((\A +\B) e\bul(t)&,ev(t))\de t-\int_{\hat b}^{ \hat b+\frac{t_0}{l} }\int_{\hat b}^t\frac{1}{\beta}\e^{-\frac{t-\tau}{\beta}}(\B e\bul(\tau),e v(t)) \de \tau\de t\nonumber\\
     &=\int_{\hat b}^{ \hat b+\frac{t_0}{l} }(f(t),v (t)) \de t+\int_{\hat b}^{ \hat b+\frac{t_0}{l} }(G(t),ev(t))\de t,
\end{align*}
and also the following limits
\begin{equation}\label{indat*}
    \lim_{t\to \hat b^+} \|\bul(t)-\tul(\hat b)\|=0,\quad \lim_{t\to \hat b^+}\|\dbul(t)-\dtul(\hat b)\|_{(V_0^D)'}=0.
 \end{equation}
Notice that the initial data $\tul(\hat b)$ and $\dtul(\hat b)$ are well defined because $\tul\in C^0([0,b];H)$ and $\dtul\in C^0([0,b];(V_0^D)')$. 

Now we define the function
\begin{align}\label{def-u}
u(t):=\begin{cases}
\tul(t)&t\in[0,\hat b]\\
\bul(t)&t\in[\hat b, \hat b+\frac{t_0}{l}],
\end{cases}
\end{align}
 and we claim that it is a weak solution on the interval $[0,\hat b+\frac{t_0}{l}]$. Notice that, since $\hat{b}\geq b-\frac{t_0}{2l}$ then $\hat{b}+\frac{t_0}{l}> b$. To prove this, let us fix $\psi\in \D(0,\hat b+\frac{t_0}{l})$. Clearly $\psi\in \V^D(0,\hat b)$ and $\psi( 0 )=0$, and since $\tul$ is a weak solution on $[0,\hat{b}]$, we can use \eqref{ex1} of Lemma \ref{congiun} to get
\begin{align}\label{fin1*}
    (\dtul(\hat b),\psi(\hat b))-\int_{0}^{\hat b}(\dot u(t),\dot \psi(t))\de t&+\int_{0}^{\hat b}((\A +\B)  eu(t),e\psi(t))\de t-\int_0^{\hat b}\int_0^t\frac{1}{\beta}\e^{-\frac{t-\tau}{\beta}}(\B eu(\tau)\de \tau,e\psi(t))\de t\nonumber\\
    &=\int_{0}^{\hat b}(f(t),\psi(t))\de t+\int_{0}^{\hat b}(F(t),e\psi(t))\de t.
\end{align}
Moreover, $\psi\in \V^D(\hat b, \hat b+\frac{t_0}{l} )$ and $\psi( \hat b+\frac{t_0}{l} )=0$, and since $\bar u$ is a weak solution on $[\hat{b},\hat{b}+\frac{t_0}{l}]$, by \eqref{ex2} of Lemma \ref{congiun} we obtain
\begin{align*}
    -(\dbul(\hat b),\psi(\hat b))&-\int_{\hat b}^{ \hat b+\frac{t_0}{l} }(\dot u(t),\dot \psi(t))\de t+\int_{\hat b}^{ \hat b+\frac{t_0}{l} }((\A +\B)  eu(t),e\psi(t))\de t\nonumber\\
    &-\int_{\hat b}^{\hat b+\frac{t_0}{l}}\int_{\hat b}^t\frac{1}{\beta}\e^{-\frac{t-\tau}{\beta}}(\B eu(\tau)\de \tau,e\psi(t))\de t=\int_{\hat b}^{ \hat b+\frac{t_0}{l} }(f(t),\psi(t))\de t+\int_{\hat b}^{ \hat b+\frac{t_0}{l} }(G(t),e\psi(t))\de t,
\end{align*}
that is
\begin{align}\label{fin2*}
    -(\dbul(\hat b),\psi(\hat b))&-\int_{\hat b}^{ \hat b+\frac{t_0}{l} }(\dot u(t),\dot \psi(t))\de t+\int_{\hat b}^{ \hat b+\frac{t_0}{l} }((\A +\B)  eu(t),e\psi(t))\de t\nonumber\\
    &-\int_{\hat b}^{\hat b+\frac{t_0}{l}}\int_{0}^t\frac{1}{\beta}\e^{-\frac{t-\tau}{\beta}}(\B eu(\tau)\de \tau,e\psi(t))\de t=\int_{\hat b}^{ \hat b+\frac{t_0}{l} }(f(t),\psi(t))\de t+\int_{\hat b}^{ \hat b+\frac{t_0}{l} }(F(t),e\psi(t))\de t.
\end{align}
From \eqref{media1/2} and \eqref{indat*}, by summing \eqref{fin1*} and \eqref{fin2*}, we obtain the following equality
\begin{align}\label{ww}
     -\int_0^{ \hat b+\frac{t_0}{l} }(\dot u(t),\dot \psi(t)) \de t&+\int_0^{ \hat b+\frac{t_0}{l} }((\A +\B) eu(t),e\psi(t))\de t-\int_0^{ \hat b+\frac{t_0}{l} }\int_0^t\frac{1}{\beta}\e^{-\frac{t-\tau}{\beta}}(\B eu(\tau),e \psi(t)) \de\tau\de t\nonumber\\
    &=\int_0^{ \hat b+\frac{t_0}{l} }(f(t),\psi (t)) \de t+\int_0^{ \hat b+\frac{t_0}{l} }(F(t),e\psi (t)) \de t.
\end{align}
By setting $c:=\hat b+\frac{t_0}{l}$ we have that the function $u$ defined in \eqref{def-u} is a weak solution to the viscoelastic dynamic system \eqref{classic_model_inf3}--\eqref{sys_inf3} in the sense of Definition \ref{newdef2} on the interval $[0,c]$, since it satisfies \eqref{data-in} and \eqref{ww}.

\textit{Step 2}. Now, we want to prove \eqref{stima-nuova2}. We can write 
 \begin{equation}\label{daf-stima-0}
      \|u\|^2_{\V(0,c)}=\|\tul\|^2_{\V(0,\hat b)}+\|\bul\|^2_{\V(\hat b,c)}\leq \|\tul\|^2_{\V(0, b)}+\|\bul\|^2_{\V(\hat b,c)}.
 \end{equation}
Notice that $\bul-\tul(\hat b)\in\V_0^D(\hat b,c)$ is a function which satisfies Dafermos' Equality \eqref{dafermos2} on the interval $[\hat b,c]$ with the right-hand side equal to
\begin{equation*}
    t_0(\dot{\tul}(\hat b),\dot \varphi(0))-\int_{\hat b}^{c}(t-t_0)\Big[( f (t),\dot \varphi(t))+( G(t)-\A e\tul(\hat b)-\e^{-\frac{t-\hat b}{\beta}}\B e\tul(\hat b), e\dot\varphi(t))\Big] \de t\quad\text{for every $\varphi\in \E_0^D(\hat b,c)$.}
\end{equation*}
Therefore, by following the estimates in \eqref{b-1}--\eqref{b-3}, we can apply \eqref{bound_su_u} of Theorem \ref{Lions}, with $X=\V^D(\hat b,c)$ and $Y=\E_0^D(\hat b,c)$, to obtain the existence of a positive constant $K=K(t_0,\A)$ such that
\begin{align}\label{daf2}
    \|\bul-\tul(\hat b)&\|_{\V(\hat b,c)}\leq K\big[\|f\|_{L^2(\hat b,c;H)}+\| G-\A e\tul(\hat b)-\e^{-\frac{\cdot-\hat b}{\beta}}\B e\tul(\hat b)\|_{H^1(\hat b,c;H^d_s)}+\|\dot{\tul}(\hat b)\|\big].
\end{align}
Now notice that 
\begin{align}
    \|G\|_{H^1(\hat b,c;H^d_s)}&\leq \|F\|_{H^1(\hat b,c;H^d_s)}+\Big(\frac{\beta}{2}\Big)^{\frac{1}{2}}\Big(1+\frac{1}{\beta}\Big)\|\B\|_{\infty}\Big(\int_0^{\hat b}\frac{1}{\beta^2}\e^{-\frac{2(\hat b-\tau)}{\beta}}\de\tau\Big)^{\frac{1}{2}}\|\tul\|_{L^2(0,\hat b;V)}\nonumber\\
    &\leq\|F\|_{H^1(\hat b,c;H^d_s)}+\frac{1}{2}\Big(1+\frac{1}{\beta}\Big)\|\B\|_{\infty}\|\tul\|_{\V(0,\hat b)},
\end{align}
and
\begin{align}\label{ult_k}
    \|\A e\tul(\hat b)+\e^{-\frac{\cdot-\hat b}{\beta}}\B e\tul(\hat b)\|_{H^1(\hat b,c;H^d_s)}&\leq \Big[\Big(\frac{t_0}{l}\Big)^{\frac{1}{2}}\|\A\|_{\infty}+\|\B\|_{\infty}\|\e^{-\frac{\cdot-\hat b}{\beta}}\|_{H^1(\hat b,c)}\Big]\|\tul(\hat b)\|_V\nonumber\\
    &\leq \Big[\Big(\frac{t_0}{l}\Big)^{\frac{1}{2}}\|\A\|_{\infty}+\Big(\frac{\beta}{2}\Big)^{\frac{1}{2}}\Big(1+\frac{1}{\beta}\Big)\|\B\|_{\infty}\Big]\|\tul(\hat b)\|_V.
\end{align}
Taking into account the information provided by \eqref{media_1}--\eqref{media_2}, we can use estimates \eqref{daf2}--\eqref{ult_k} to deduce the existence of a positive constant $\bar{C}=\bar{C}(t_0,l,\A,\B,\beta)$ such that
\begin{align}\label{ultimo_est}
      \|\bul\|_{\V(\hat b,c)}&\leq  \bar{C}\Big(\|f\|_{L^2(\hat b,c;H)}+\|F\|_{H^1(\hat b,c;H^d_s)}+\|\tul\|_{\V(0,b)}\Big).
 \end{align}
By \eqref{stima-nuova}, \eqref{daf-stima-0}, and \eqref{ultimo_est} we obtain the final estimate \eqref{stima-nuova2}.
\end{proof}

Now we are in position to prove the main theorem of this section.

\begin{proof}[Proof of Theorem~\ref{main1}]
Let us consider $u_0\in \V^D(0,t_0)$ a weak solution to the viscoelastic dynamic system \eqref{classic_model_inf3}--\eqref{sys_inf3} in the sense of Definition \ref{newdef2} on the interval $[0,t_0]$, whose existence is guaranteed by Remark \ref{remi2}. Moreover, $u_0$ satisfies \eqref{stima_sol_0}. By applying a finite number of times Proposition \ref{conj} with $l=1$ we can extend $u_0$ to $\tilde{u}\in \V^D(0,b)$ which is a weak solution on the interval $[0,b]$, where $T-b<t_0$. Now we select $\hat b\in (T-t_0,b)$ in such a way \eqref{media_1}--\eqref{media_2} are satisfied on the interval $[T-t_0,b]$. By choosing $l=\frac{t_0}{T-\hat{b}}\geq 1$, since $\hat b+\frac{t_0}{l}=T$, taking into account Proposition \ref{conj} we can extend $\tul$ to a function $u\in \V^D(0,T)$ which is a weak solution to the viscoelastic dynamic system \eqref{classic_model_inf3}--\eqref{sys_inf3} on the interval $[0,T]$. Moreover $u$ satisfies \eqref{stima-nuova2} on $[0,T]$. Finally, by applying Remark \ref{remi} we get the thesis.
\end{proof}

\section{Existence: A coupled system equivalent to the viscoelastic dynamic system}\label{sub2}
In this section, we illustrate a second method to find solutions to the viscoelastic dynamic system \eqref{classic_model_inf3}--\eqref{sys_inf3} according to Definition \ref{newdef2}. This method is based on a minimizing movement approach deriving from the theory of gradient flows, and it is a classical tool used to prove the existence of solutions in the context of fractures, see, e.g., \cite{sc}, \cite{DM-Lar}, \cite{DMT2}. By means of this method, we are also able to provide an energy-dissipation inequality satisfied by the solution, and consequently, thanks to this inequality, we prove that such a solution satisfies the initial conditions \eqref{sys_inf3} in a stronger sense than the one stated in \eqref{ini*}.

To this aim, let us define the following \textit{coupled} system
\begin{equation}\label{cs}
\begin{cases}
\ddot u(t)-\div(\A eu(t))-\div(\B(eu(t)-w(t)))=f(t)-\div (F(t)-\e^{-\frac{t}{\beta}}\B w^0) &\quad\text{in $\Omega\setminus\Gamma_t$, $t\in(0,T)$},\\
 \beta \dot w(t)+w(t)=eu(t) 
\end{cases}
\end{equation}
with the following boundary and initial conditions
\begin{alignat}{4}
&u(t)=z(t) && \qquad \text{on $\partial_D\Omega$}, && \quad t\in(0,T),\label{csb1}\\
&[\A eu(t)+\B (eu(t)-w(t))] \nu=(F(t)-\e^{-\frac{t}{\beta}}\B w^0)\nu && \qquad \text{on } \partial_N\Omega, &&\quad t\in(0,T),\label{csb2}\\
&[\A eu(t)+\B (eu(t)-w(t))] \nu=(F(t)-\e^{-\frac{t}{\beta}}\B w^0)\nu && \qquad \text{on }\Gamma_t,  &&\quad t\in(0,T),\label{csb3}\\
& u(0)=u^0, \quad w(0)=w^0,\quad  \dot u(0)=u^1,\label{csi} 
\end{alignat}
where $w^0\in H^d_s$. Also in this case, the strong formulation of the coupled system \eqref{cs}--\eqref{csi} is only formal. By setting
\begin{equation*}
\V:=\V(0,T),\quad \V^D:=\V^D(0,T), \quad \D:=\D(0,T), 
\end{equation*}
we give the following definition.

\begin{definition}\label{defsym}
We say that $(u,w)\in\V\times  H^1(0,T;H^d_s)$ is a {\it weak solution} to the coupled system \eqref{cs}--\eqref{csi} if the following conditions hold:
\begin{itemize} 
    \item $u-z\in \V^D$ and 
    \begin{align}\label{wweak}
     -\int_0^T(\dot u(t),\dot \varphi(t)) \de t&+\int_0^T(\A  eu(t),e \varphi(t)) \de t+\int_0^T(\B( eu(t)-w(t)),e \varphi(t)) \de t\nonumber\\
     &=\int_0^T(f(t),\varphi (t)) \de t+\int_0^T(F(t),e\varphi (t)) \de t-\int_0^T\e^{-\frac{t}{\beta}}(\B w^0,e\varphi (t)) \de t,
\end{align}
for every $\varphi \in \D$;
\item for a.e. $t\in (0,T)$ 
\begin{equation}\label{odes}
\begin{cases}
\beta\dot w(t)+w(t)=eu(t) \\
w(0)=w^0
\end{cases}
\end{equation}
where the equalities are to be understood in the sense of the Hilbert space $H^d_s$;
\item the initial conditions \eqref{ini*} are satisfied.
\end{itemize}
\end{definition}

The following result proves that the new problem is equivalent to the first one.
\begin{theorem}\label{equivalence}
The viscoelastic dynamic system \eqref{classic_model_inf3}--\eqref{sys_inf3} is equivalent to the coupled system \eqref{cs}--\eqref{csi}.
\end{theorem}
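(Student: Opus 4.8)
The plan is to make explicit the correspondence suggested by the auxiliary variable: once $w^0\in H^d_s$ is fixed, solving the ODE in \eqref{odes} by the variation‑of‑constants formula identifies $w$ with an exponential convolution of $eu$, and the kernel appearing there is precisely the memory kernel in \eqref{classic_model_inf3}. I would therefore prove the two implications separately and then note that the two constructions are mutually inverse for fixed $w^0$.

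\emph{From the viscoelastic equation to the coupled system.} Let $u\in\V$ be a weak solution of \eqref{classic_model_inf3}--\eqref{sys_inf3} in the sense of Definition~\ref{newdef2}, and fix $w^0\in H^d_s$. I would set
\begin{equation*}
w(t):=\e^{-\frac{t}{\beta}}w^0+\frac{1}{\beta}\int_0^t\e^{-\frac{t-\tau}{\beta}}eu(\tau)\de\tau,\qquad t\in[0,T].
\end{equation*}
Since $u\in L^2(0,T;V)$ gives $eu\in L^2(0,T;H^d_s)$, and convolution with the kernel $\tfrac{1}{\beta}\e^{-\cdot/\beta}\mathbf{1}_{[0,\infty)}\in L^1(0,\infty)$ preserves $L^2(0,T;H^d_s)$ by Young's inequality, one checks that $w\in L^2(0,T;H^d_s)$, that $w$ is an $H^d_s$‑valued absolutely continuous function with $\dot w=\tfrac{1}{\beta}(eu-w)\in L^2(0,T;H^d_s)$, and that $w(0)=w^0$; hence $w\in H^1(0,T;H^d_s)$ and \eqref{odes} holds. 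It then remains to rewrite the memory term of \eqref{first_weak}: for $\varphi\in\D$,
\begin{equation*}
\int_0^T\!\!\int_0^t\frac{1}{\beta}\e^{-\frac{t-\tau}{\beta}}(\B eu(\tau),e\varphi(t))\de\tau\de t=\int_0^T\Big(\B\Big(w(t)-\e^{-\frac{t}{\beta}}w^0\Big),e\varphi(t)\Big)\de t,
\end{equation*}
and to substitute this, together with the splitting $(\A+\B)eu=\A eu+\B eu$, into \eqref{first_weak}; the two occurrences of $\int_0^T\e^{-t/\beta}(\B w^0,e\varphi(t))\de t$ then cancel and one is left exactly with \eqref{wweak}. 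Because the constraint $u-z\in\V^D$ and the initial conditions \eqref{ini*} are identical in Definitions~\ref{newdef2} and \ref{defsym}, the pair $(u,w)$ is a weak solution of \eqref{cs}--\eqref{csi}.

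\emph{From the coupled system to the viscoelastic equation.} Conversely, let $(u,w)\in\V\times H^1(0,T;H^d_s)$ be a weak solution of \eqref{cs}--\eqref{csi}. The second equation in \eqref{odes} is a linear first‑order ODE in $H^d_s$ with right‑hand side $eu\in L^1(0,T;H^d_s)$ and initial datum $w^0$; by the variation‑of‑constants formula together with uniqueness (the difference $y$ of two solutions satisfies $\beta\dot y+y=0$ and $y(0)=0$, hence $y\equiv0$) its unique solution is exactly the function $w$ displayed above. Plugging this expression for $w$ into \eqref{wweak}, using Fubini's theorem to turn $\int_0^T(\B w(t),e\varphi(t))\de t$ back into the iterated time‑integral, and cancelling once more the $\int_0^T\e^{-t/\beta}(\B w^0,e\varphi(t))\de t$ contributions, I recover \eqref{first_weak} for every $\varphi\in\D$; together with $u-z\in\V^D$ and \eqref{ini*}, which carry over unchanged, this shows that $u$ is a weak solution of \eqref{classic_model_inf3}--\eqref{sys_inf3}. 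Since the assignments $u\mapsto(u,w)$ and $(u,w)\mapsto u$ are inverse to one another for fixed $w^0$, the two problems are equivalent.

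I do not expect a genuine obstacle: the whole content is the elementary fact that the exponential memory kernel is the Green's function of $\beta\tfrac{\de}{\de t}+1$. The only places asking for a little care are the regularity bookkeeping that puts $w$ in $H^1(0,T;H^d_s)$ and lets one pass freely between the integral and differential forms of \eqref{odes}, and the Fubini interchange in the memory term, where one must check that the terms produced by the non‑homogeneous initial condition $w(0)=w^0$ cancel exactly — this cancellation being precisely the reason the forcing in \eqref{cs} is written as $f-\div(F-\e^{-t/\beta}\B w^0)$ rather than $f-\div F$.
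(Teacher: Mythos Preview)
Your proof is correct and follows essentially the same route as the paper: both directions rest on the variation-of-constants formula $w(t)=\e^{-t/\beta}w^0+\tfrac{1}{\beta}\int_0^t\e^{-(t-\tau)/\beta}eu(\tau)\,\de\tau$ for \eqref{odes}, and on substituting this expression into \eqref{wweak} (respectively \eqref{first_weak}) so that the $\e^{-t/\beta}\B w^0$ contribution produced on the left matches the one already present on the right. The only cosmetic difference is that the paper treats the implication \emph{coupled $\Rightarrow$ viscoelastic} first, while you begin with the converse; your added remarks on Young's inequality and ODE uniqueness are harmless elaborations of what the paper leaves implicit.
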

\begin{proof}
Let us consider a weak solution $(u,w)\in \V\times H^1(0,T;H^d_s)$ to the coupled system \eqref{cs}--\eqref{csi} according to Definition \ref{defsym}. In view of the theory of ordinary differential equations valued in Hilbert spaces, by \eqref{odes} we can write 
\begin{equation}\label{sol-ode}
    w(t)=w^0\e^{-\frac{t}{\beta}}+\int_0^t\frac{1}{\beta}\e^{-\frac{t-\tau}{\beta}} eu(\tau)\de \tau\qquad \text{for every $t\in [0,T]$}.
\end{equation}
Moreover, by definition $u-z\in \V^D$ and \eqref{wweak} holds for every $\varphi \in \D$. By substituting \eqref{sol-ode} in \eqref{wweak} we obtain
\begin{align*}
     -\int_0^T(\dot u(t),\dot \varphi(t)) \de t&+\int_0^T\Big((\A +\B) eu(t)-\int_0^t\frac{1}{\beta}\e^{-\frac{t-\tau}{\beta}}\B e u(\tau)\de \tau,e \varphi(t)\Big)\de t -\int_0^T \e^{-\frac{t}{\beta}}(\B w^0,e\varphi(t))\de t\nonumber   \\
     &=\int_0^T(f(t),\varphi (t)) \de t+\int_0^T(F(t),e\varphi (t)) \de t-\int_0^T \e^{-\frac{t}{\beta}}(\B w^0,e\varphi(t))\de t.
\end{align*}
Therefore, since, again by definition, \eqref{ini*} holds, $u$ is a weak solution to the viscoelastic dynamic system \eqref{classic_model_inf3}--\eqref{sys_inf3} in the sense of Definition \ref{newdef2}.

Vice versa, if we consider a solution $u\in \V$ to the viscoelastic dynamic system \eqref{classic_model_inf3}--\eqref{sys_inf3}, then $u-z\in \V^D$ and 
\begin{align}\label{mid}
-\int_0^T(\dot u(t),\dot \varphi (t)) \de t+\int_0^T((\A +\B) eu(t),e \varphi(t)) \de t&-\int_0^T\int_0^t\frac{1}{\beta}\e^{-\frac{t-\tau}{\beta}}(\B eu(\tau),e \varphi(t)) \de \tau\de t\nonumber\\
&=\int_0^T(f(t),\varphi(t)) \de t+\int_0^T(F(t),e\varphi (t)) \de t,
\end{align}
for every $\varphi\in\D$. Let $w^0\in H^d_s$ and let $w$ be the function defined in \eqref{sol-ode}. It is easy to see that $w\in H^1(0,T;H^d_s)$ and by summing to both hand sides of \eqref{mid} the term
$$-\int_0^T\e^{-\frac{t}{\beta}}(\B w^0,e\varphi(t))\de t,$$
we get \eqref{wweak}. This, together with \eqref{ini*}, shows that $(u,w)\in \V\times  H^1(0,T;H^d_s)$ is a weak solution to the coupled system \eqref{cs}--\eqref{csi} in the sense of Definition \ref{defsym}. The proof is then complete.
\end{proof}

Now we are in position to state the main result of this section.
\begin{theorem}\label{main2}
	There exists a weak solution $(u,w)\in\V\times  H^1(0,T;H^d_s)$ to the coupled system \eqref{cs}--\eqref{csi} according to Definition \ref{defsym}. Moreover, $u\in C^0_w([0,T];V)$, $\dot u\in C^0_w([0,T];H)\cap H^1(0,T;(V_0^D)')$, and
	\begin{equation*}
  \lim_{t\to 0^+}u(t)= u^0\text{ in $V$}\quad\text{and}\quad\lim_{t\to 0^+}\dot u(t)=u^1\text{ in $H$}.
	\end{equation*}
\end{theorem}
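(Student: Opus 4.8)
The natural strategy is to build the solution $(u,w)$ by the time-discretization (minimizing movements) scheme outlined in the introduction, so I would first set up the incremental problems. Partition $[0,T]$ with step $\tau_n=T/n$ and nodes $t_n^k=k\tau_n$; at each step define $(u_n^{k+1},w_n^{k+1})$ as the solution of a discrete elliptic problem on $V_{t_n^{k+1}}$ obtained by replacing $\ddot u$ with the second difference quotient $(u_n^{k+1}-2u_n^k+u_n^{k-1})/\tau_n^2$ and $\dot w$ with $(w_n^{k+1}-w_n^k)/\tau_n$, coupled through the relation $\beta(w_n^{k+1}-w_n^k)/\tau_n+w_n^{k+1}=eu_n^{k+1}$. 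Each such problem is the Euler--Lagrange equation of a strictly convex coercive functional on $V_{t_n^{k+1}}^D$ (after subtracting the Dirichlet datum), so existence and uniqueness of $(u_n^{k+1},w_n^{k+1})$ follows from Lax--Milgram; the only subtlety is that the domain $\OG_t$ changes with $k$, which is handled by the fixed ambient spaces $H$, $H^d_s$ and the monotonicity $\Gamma_s\subset\Gamma_t$ from (E3). The coupling allows one to eliminate $w_n^{k+1}$ explicitly in terms of $eu_n^{k+1}$ and the previous $w_n^k$, reducing to a single incremental problem for $u_n^{k+1}$, exactly as in equation~(\ref{unkM}).

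The core of the argument is the discrete energy estimate. Testing the incremental equation for $u$ with $u_n^{k+1}-u_n^k$ and the one for $w$ with an appropriate multiple of $w_n^{k+1}$, and summing over $k$, one obtains a discrete version of the energy-dissipation inequality~(\ref{eq:eninM}): the discrete kinetic energy $\tfrac12\|(u_n^{k+1}-u_n^k)/\tau_n\|^2$, the elastic energy $\tfrac12(\A eu_n^{k+1},eu_n^{k+1})$, the ``memory'' energy involving $\B$ and $w_n^{k+1}$, plus a nonnegative dissipation term coming from $\B\dot w$, are controlled by the data $f$, $F$, $u^0$, $u^1$, $w^0$ uniformly in $n$. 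Here the crucial point (and the main obstacle) is the identification of the viscous term: one must choose the test function for the $w$-equation so that the cross terms telescope and produce a perfect square plus a sign-definite remainder, using the symmetry~(\ref{CB1.5}) and coercivity~(\ref{CB2M}) of $\B$. A Gronwall-type discrete argument then upgrades this to uniform bounds $\|u_n^{k}\|_V$, $\|(u_n^{k+1}-u_n^k)/\tau_n\|$, $\|w_n^k\|$ bounded independently of $n$ and $k$. One also needs a bound on the discrete second difference of $u$ in the $(V_0^D)'$ norm, read off directly from the incremental equation.

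Next I would define the piecewise-affine interpolant $u_n$ of the nodes $u_n^k$ (so that $\dot u_n$ is piecewise constant equal to the difference quotients), and similarly $w_n$, together with the piecewise-constant interpolants $\bar u_n$, $\bar w_n$ needed to rewrite the discrete equations in integral form. The uniform estimates give, along a subsequence, $u_n\rightharpoonup u$ weakly* in $L^\infty(0,T;V)$ and $\dot u_n\rightharpoonup\dot u$ weakly* in $L^\infty(0,T;H)$ (hence in $\V(0,T)$), $w_n\rightharpoonup w$ in $H^1(0,T;H^d_s)$, and the various interpolants share the same limits. The constraint $u(t)\in V_t$ for a.e.\ $t$ is preserved in the limit because $\Gamma_s\subset\Gamma_t$ allows one to pass to the limit on test functions supported where the crack is already present, exactly as in Lemma~\ref{spazi_dens}. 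Passing to the limit in the discrete weak formulation yields~(\ref{wweak}) and, via the ODE~(\ref{odes}) for $w$ (whose limit passage is easy since it is linear), shows $(u,w)$ is a weak solution in the sense of Definition~\ref{defsym}; weak lower semicontinuity of the energy terms in the discrete inequality gives the energy-dissipation inequality~(\ref{eq:eninM}) for $(u,w)$.

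Finally, the improved continuity and initial conditions follow from the energy-dissipation inequality. Since $u\in L^\infty(0,T;V)$ with $\dot u\in L^\infty(0,T;H)$, one gets $u\in C^0_w([0,T];V)$ and $\dot u\in C^0_w([0,T];H)$ by a standard argument (Strauss's lemma / the Lions--Magenes interpolation trick), and Proposition~\ref{dev2} already gives $\dot u\in H^1(0,T;(V_0^D)')$, which pins down $\dot u(0)$. To upgrade the weak limits $u(0^+)\rightharpoonup u^0$ in $V$ and $\dot u(0^+)\rightharpoonup u^1$ in $H$ to \emph{strong} convergence, one evaluates the energy-dissipation inequality on $[0,t]$, takes $\limsup_{t\to0^+}$, and compares with the total energy at time $0$ built from $u^0,u^1,w^0$: the inequality forces $\limsup_{t\to0^+}(\tfrac12\|\dot u(t)\|^2+\tfrac12(\A eu(t),eu(t))+\dots)\le$ the value at $0$, while weak lower semicontinuity gives the reverse $\liminf$, so the norms converge and, combined with weak convergence in the Hilbert spaces $H$ and (the $\A$-weighted) $V$, this yields strong convergence $u(t)\to u^0$ in $V$ and $\dot u(t)\to u^1$ in $H$. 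I expect the delicate bookkeeping in the discrete energy estimate — getting the memory/dissipation terms to have the right signs and to telescope — to be the main technical hurdle; everything downstream is weak compactness and lower semicontinuity.
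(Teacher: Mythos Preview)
Your proposal is correct and follows essentially the same route as the paper: the time-discretization/minimizing-movements construction of $(u_n^k,w_n^k)$, the discrete energy estimate via testing with $(\delta u_n^k-\delta z_n^k,\delta w_n^k)$ and a discrete Gronwall, compactness of the interpolants, passage to the limit in the weak formulation and the ODE, and finally the upgrade of the initial conditions from weak to strong via the energy-dissipation inequality combined with weak lower semicontinuity. The only cosmetic difference is that the paper obtains $\dot u\in H^1(0,T;(V_0^D)')$ directly from a uniform bound on the discrete second differences (rather than invoking Proposition~\ref{dev2} a posteriori), which is more natural in this context since it also yields the pointwise weak convergences needed in the energy-inequality step.
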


The proof of this result will be given at the end of this section.

\subsection{Discretization in time}\label{sub3}
In this subsection we prove Theorem \ref{main2} by means of a time discretization scheme in the same spirit of~\cite{DM-Lar}. 

Let us fix $n\in\N$ and set
\begin{equation}\label{prime-def}
    \tau_n:=\frac{ T}{n},\quad u_n^0:=u^0,\quad u_n^{-1}:=u^0-\tau_nu^1,\quad w_n^0:=w^0,\quad F_n^0:=F(0),\quad h_n^0:=\B w^0.
\end{equation}
We define 
\begin{alignat}{4}
&V_n^k:=V^D_{k\tau_n},&&\quad z_n^k:=z(k\tau_n)  &&\quad &&\quad\text{for }k=0,\dots,n,\nonumber\\
%&\delta z_n^0:=\dot z(0),&&\quad\delta z_n^k:=\frac{z_n^k- z_n^{k-1}}{\tau_n}, &&\quad \delta^2 z_n^k:=\frac{\delta z_n^k-\delta z_n^{k-1}}{\tau_n}&&\quad \text{for }k=1,\dots,n,\label{derivate}\\
&F^k_n:=F(k\tau_n),&&\quad h_n^k:=\e^{-\frac{k\tau_n}{\beta}}\B w^0,&&\quad f_n^k:=\dashint_{(k-1)\tau_n}^{k\tau_n} f(\tau)\de \tau
&&\quad\text{for $k=1,\dots,n$}.\nonumber
\end{alignat}
%and $\delta F_n^k$ and $\delta h_n^k$ for $k=1,\dots,n$ in the same way we have already done in \eqref{derivate}.

For $k=1,...,n$ let $(u_n^k,w_n^k)$ be the minimizer in $V_n^k\times H^d_s$ of the functional
\begin{align}\label{func}
    (u,w)\mapsto &\frac{1}{2\tau_n^2}\|u-2u_n^{k-1}+u_n^{k-2}\|^2+\frac{1}{2}(\A eu,eu)+\frac{1}{2}(\B (eu-w),eu-w)\nonumber\\
    &+\frac{\beta}{2\tau_n^2}(\B (w-w_n^{k-1}),w-w_n^{k-1})-(f_n^k,u)-(F_n^k-h_n^k,eu).
\end{align}
Using the coerciveness \eqref{CB2M}, it is easy to see that the functional in \eqref{func} is convex and bounded from below by 
$$\frac{1}{4}\min\Big\{\frac{1}{2\tau_n^2},C_{\A},\frac{1}{\tau_n^2}C_{\B}\beta\Big\}(\|u\|^2_V+\|w\|^2)-C_n^k,$$ 
for a suitable positive constant $C_n^k$. The existence of a minimizer then follows from the lower semicontinuity of the functional with respect to the strong (and hence to the weak) convergence in $V_n^k\times H^d_s$.

To simplify the exposition, for $k= 0,...,n$ we define 
\begin{equation}\label{derivate}
    \delta u_n^k:=\frac{u_n^k-u_n^{k-1}}{\tau_n}\quad \text{and}\quad  \delta^2 u_n^k:=\frac{\delta u_n^k-\delta u_n^{k-1}}{\tau_n}.
\end{equation}
The Euler equation for \eqref{func} gives
\begin{align}\label{unkM}
(\delta^2u_n^k,\varphi)+(\A  eu_n^k,e\varphi)&+(\B(eu_n^k-w_n^k),e\varphi-\psi)+\beta(\B\delta w_n^k,\psi)=(f_n^k,\varphi)+(F_n^k-h_n^k,e\varphi),
\end{align}
for every $(\varphi,\psi)\in V_n^k\times H^d_s$, where $\delta w_n^k$ is defined for every $k=1,\dots,n$ as in \eqref{derivate}, and $\delta u_n^0=u^1$ by \eqref{prime-def}.
Notice that by choosing as a test function the pair $(\varphi,0)$ with $\varphi\in V_n^k$, we get
$$(\delta^2u_n^k,\varphi)+((\A +\B)  eu_n^k-\B w_n^k,e\varphi)=(f_n^k,\varphi)+(F_n^k-h_n^k,e\varphi),$$
which is a discrete-in-time approximation of \eqref{wweak}. On the other hand, if we use as a test function in \eqref{unkM} the pair $(0,\psi)$ with $\psi\in H^d_s$, we have
$$(\beta\delta w_n^k+w_n^k-eu_n^k,\psi)=0,$$
whence $\beta\delta w_n^k+w_n^k-eu_n^k=0$ (as element of $H^d_s$), which is an approximation in time of \eqref{odes}. 

In the next lemma we show an estimate for the family $\{(u_n^k,w_n^k)\}_{k=1}^n$, which is uniform with respect to $n$, and it will be used later to pass to the limit in the discrete equation \eqref{unkM}.

\begin{lemma}\label{lem:estM}
There exists a positive constant $C$, independent of $n$, such that 
\begin{equation}\label{eq:estM}
\max_{i=1,..,n}\|\delta u_n^i\|+\max_{i=1,..,n}\norm{eu_n^i}+\max_{i=1,..,n}\norm{w_n^i}+\sum_{i=1}^n \tau_n \norm{\delta w_n^i}^2\le C.
\end{equation}
\end{lemma}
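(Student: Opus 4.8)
The plan is to derive a discrete energy estimate from the Euler equation \eqref{unkM} by testing with the natural discrete velocity, exactly mirroring the continuous energy identity for the coupled system \eqref{cs}. First I would choose in \eqref{unkM} the test pair $(\varphi,\psi) = (\tau_n\,\delta u_n^k,\ \tau_n\,\delta w_n^k)$, which is admissible since $u_n^k - u_n^{k-1} \in V_n^k$ (because $V_n^{k-1}\subset V_n^k$ by the monotonicity (E3) of the cracks, and both $z_n^k$-type terms cancel once we reduce to zero Dirichlet data as in Remark \ref{remi}). This yields, term by term: the inertial term $(\delta^2 u_n^k,\delta u_n^k)\tau_n$, which by the elementary identity $2a\cdot(a-b) = |a|^2 - |b|^2 + |a-b|^2$ controls $\tfrac12(\|\delta u_n^k\|^2 - \|\delta u_n^{k-1}\|^2)$ from below; the elastic term $(\A eu_n^k, eu_n^k - eu_n^{k-1})$, handled the same way using symmetry \eqref{CB1.5} to get $\tfrac12((\A eu_n^k,eu_n^k) - (\A eu_n^{k-1},eu_n^{k-1}))$ plus a nonnegative remainder; the $\B$-coupling term $(\B(eu_n^k - w_n^k), (eu_n^k - eu_n^{k-1}) - (w_n^k - w_n^{k-1}))$, which is exactly $(\B\xi_n^k, \xi_n^k - \xi_n^{k-1})$ with $\xi_n^k := eu_n^k - w_n^k$, again telescoping; and the dissipative term $\beta(\B\delta w_n^k, \delta w_n^k)\tau_n \ge \beta C_{\B}\|\delta w_n^k\|^2\tau_n$, which is the nonnegative quantity we want to keep on the left.

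Summing over $k = 1,\dots,i$ then gives a discrete energy inequality of the form
\begin{align*}
\tfrac12\|\delta u_n^i\|^2 + \tfrac12(\A eu_n^i,eu_n^i) + \tfrac12(\B\xi_n^i,\xi_n^i) + \beta C_{\B}\sum_{k=1}^i \tau_n\|\delta w_n^k\|^2
&\le \tfrac12\|u^1\|^2 + \tfrac12(\A eu^0,eu^0) + \tfrac12(\B\xi_n^0,\xi_n^0) \\
&\quad + \sum_{k=1}^i \tau_n\Big[(f_n^k,\delta u_n^k) + (F_n^k - h_n^k, e\delta u_n^k)\Big].
\end{align*}
The initial terms are bounded uniformly in $n$ by the data, using $\|f_n^k\| \le \dashint |f|$ and $\|h_n^k\|\le\|\B\|_\infty\|w^0\|$. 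For the forcing terms on the right I would handle $\sum_k \tau_n(f_n^k,\delta u_n^k)$ by Cauchy--Schwarz and Young to absorb a small multiple of $\max_i\|\delta u_n^i\|^2$; the term $\sum_k \tau_n(F_n^k - h_n^k, e\delta u_n^k)$ is the delicate one, since $e\delta u_n^k$ is not controlled pointwise — here I would perform a discrete summation by parts (Abel summation), moving the difference onto $F_n^k - h_n^k$ so as to produce terms of the form $\tau_n\sum_k(\delta F_n^k - \delta h_n^k, eu_n^{k-1})$ together with boundary terms $(F_n^i - h_n^i, eu_n^i)$ and $(F_n^0 - h_n^0, eu^0)$. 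The boundary term at step $i$ is then absorbed into $\tfrac14 C_{\A}\|eu_n^i\|^2$ via Young, and the telescoped sum is bounded by $\|F\|_{H^1(0,T;H^d_s)}^2$ plus $\|h_n^k\|$-bounds times $\sum_k\tau_n\|eu_n^{k-1}\|^2$.

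At this stage one arrives at an estimate where $\max_{i}\|\delta u_n^i\|^2 + \max_i\|eu_n^i\|^2 + \max_i\|\xi_n^i\|^2 + \sum_k\tau_n\|\delta w_n^k\|^2$ is bounded by a constant plus $C\sum_{i}\tau_n\|eu_n^i\|^2$, and the conclusion follows from the discrete Gr\"onwall inequality (recalling $\sum_i\tau_n = T$). Finally, $\|w_n^i\|$ is recovered from $w_n^i = w^0 + \sum_{k=1}^i\tau_n\delta w_n^k$, so $\|w_n^i\| \le \|w^0\| + T^{1/2}(\sum_k\tau_n\|\delta w_n^k\|^2)^{1/2}$, and $\|eu_n^i\|$ together with $\|\xi_n^i\|$ and $\|w_n^i\|$ controls everything in \eqref{eq:estM}. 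I expect the main obstacle to be the discrete summation by parts on the $F$-term: one must be careful that all the discrete boundary contributions are either telescoping or absorbable, and that no uncontrolled factor of $\tau_n^{-1}$ sneaks in — this is the discrete analogue of the ``integration by parts in time'' that makes the continuous energy estimate work, and it is where the hypothesis $\dot F\in L^2$ (i.e. $F\in H^1$) is genuinely used.
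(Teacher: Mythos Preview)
Your plan matches the paper's proof: test \eqref{unkM} with the discrete velocity, telescope each quadratic term via $2a\cdot(a-b)=|a|^2-|b|^2+|a-b|^2$, handle $(F_n^k-h_n^k,e\delta u_n^k)$ by discrete summation by parts (the paper records these as \eqref{dis-part2}--\eqref{dis-part2.1}), absorb the resulting boundary contributions into $\tfrac{C_\A}{4}\|eu_n^i\|^2$ via Young, and close with the discrete Gr\"onwall lemma.

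One correction on admissibility: the discrete scheme in Subsection~\ref{sub3} carries the Dirichlet datum $z$, so that $u_n^k-z_n^k\in V_n^k$, and Remark~\ref{remi} concerns the continuous formulation of Section~\ref{sub1}, not this discretization; hence $\delta u_n^k\notin V_n^k$ in general and your test pair is inadmissible as stated. The paper resolves this by testing with $\tau_n(\delta u_n^k-\delta z_n^k,\delta w_n^k)$, which generates additional right-hand terms $(\delta^2 u_n^k,\delta z_n^k)$ and $a(\omega_n^k,(\delta z_n^k,0))$; the former requires one further discrete integration by parts \eqref{dis-part} and contributes a $\sum_k\tau_n\|\delta u_n^k\|^2$ to the Gr\"onwall sum. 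Apart from this, the argument runs exactly as you outline.
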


\begin{proof}
To simplify our computations, we define the following two bilinear symmetric forms
\begin{alignat*}{2}
&a:  (V\times H^d_s)\times (V\times H^d_s)\rightarrow\R  \qquad &&\qquad b: H^d_s\times H^d_s\rightarrow \R\\  
&a((u,w),(\varphi,\psi)):=(\A  eu,e\varphi)+( \B(eu-w),e\varphi-\psi), &&\qquad b(w,\psi):=\beta(\B w,\psi).
\end{alignat*}
Thanks to \eqref{CB2M} we have that $a((\varphi,\psi),(\varphi,\psi))\geq 0$ and $b(\psi,\psi)\geq 0$ for every $\varphi\in V$ and $\psi\in H^d_s$. Now we set $\omega_n^k:=(u_n^k,w_n^k)$ for $k=0,\dots,n$, and we take $(\varphi,\psi)=\tau_n(\delta u_n^k-\delta z_n^k,\delta w_n^k)\in V_n^k\times H^d_s$ as a test function in \eqref{unkM}, where $\delta z_n^0:=\dot z(0)$ and $\delta z_n^k$ is defined as in \eqref{derivate}. Therefore, we obtain
\begin{align}
\|\delta u_n^k\|^2-(\delta u_n^{k-1},\delta u_n^{k})&-\tau_n(\delta^2 u_n^k,\delta z^k_n)+a(\omega_n^k,\omega_n^k)-a(\omega_n^{k-1},\omega_n^k)-\tau_n a(\omega_n^k,(\delta z_n^k,0))+\tau_n b(\delta w_n^k,\delta w_n^k)\nonumber\\
&=\tau_n(f_n^k,\delta u_n^k-\delta z_n^k)+\tau_n(F_n^k,e\delta u_n^k-e\delta z_n^k)- \tau_n(h_n^k,e\delta u_n^k-e\delta z_n^k).\label{us}
\end{align}
By means of the following identities
\begin{align*}
&\norm{\delta u_n^k}^2-(\delta u_n^{k-1},\delta u_n^k)=\frac{1}{2}\norm{\delta u_n^k}^2-\frac{1}{2}\norm{\delta u_n^{k-1}}^2+\frac{\tau_n^2}{2}\norm{\delta^2 u_n^k}^2,\\
&a(\omega_n^k,\omega_n^k)-a(\omega_n^{k-1},\omega_n^k)=\frac{1}{2}a(\omega_n^k,\omega_n^k)-\frac{1}{2}a(\omega_n^{k-1},\omega_n^{k-1})+\frac{\tau_n^2}{2}a(\delta \omega_n^k,\delta \omega_n^k),
\end{align*}
from \eqref{us} we infer
\begin{equation}\label{nmr}
\frac{1}{2}\|\delta u_n^k\|^2-\frac{1}{2}\|\delta u_n^{k-1}\|^2+\frac{1}{2}a(\omega_n^k,\omega_n^k)-\frac{1}{2}a(\omega_n^{k-1},\omega_n^{k-1})+\tau_n b(\delta w_n^{k},\delta w_n^{k})\leq\tau_n W_n^k,
\end{equation}
where
\begin{align*}
W_n^k:=(f_n^k,\delta u_n^k-\delta z_n^k)+(F_n^k,e\delta u_n^k-e\delta z_n^k)- (h_n^k,e\delta u_n^k-e\delta z_n^k)+(\delta^2 u_n^k,\delta z^k_n)+ a(\omega_n^k,(\delta z_n^k,0)).   
\end{align*}
We fix $i\in\{1,\dots,n\}$ and we sum in \eqref{nmr} over $k= 1,\dots,i$ to obtain the following discrete energy inequality
\begin{equation}\label{grwM}
\frac{1}{2}\|\delta u_n^i\|^2+\frac{1}{2}a(\omega_n^i,\omega_n^i)+\sum_{k=1}^i\tau_n b(\delta w_n^{k},\delta w_n^{k})\leq \En_0+\sum_{k=1}^i \tau_n W_n^k,
\end{equation}
where 
$$\En_0:=\frac{1}{2}\norm{u^1}^2+\frac{1}{2}(\A  eu^0,eu^0)+\frac{1}{2}(\B(eu^0-w^0),eu^0-w^0).$$ 
Let us now estimate the right-hand side of \eqref{grwM} from above. By means of Cauchy-Schwarz and Young's inequalities we can write
\begin{align}
\Big|\sum_{k=1}^i \tau_n(f_n^k,\delta u_n^k-\delta z_n^k)\Big| 
&\leq \|f\|^2_{L^2(0,T;H)}+\frac{1}{2}\|\dot z\|^2_{L^2(0,T;H)}+\frac{1}{2}\sum_{k=1}^i \tau_n\|\delta u_n^k\|^2,\\
\Big|\sum_{k=1}^i \tau_n(h_n^k,\delta z_n^k)\Big| 
&\leq \frac{1}{2}\sum_{k=1}^i\tau_n\e^{-\frac{2k\tau_n}{\beta}}\|\B w^0\|^2+\frac{1}{2}\sum_{k=1}^i\tau_n\|\delta z_n^k\|^2\leq \frac{ T}{2}\|\B w^0\|^2+\frac{1}{2}\|\dot z\|^2_{L^2(0,T;H)},\\
\Big|\sum_{k=1}^i \tau_n(F_n^k,e\delta z_n^k)\Big| &\leq \frac{1}{2}\sum_{k=1}^i\tau_n\|F_n^k\|^2+\frac{1}{2}\sum_{k=1}^i\tau_n\|e\delta z_n^k\|^2\nonumber\\
&\leq T\|F(0)\|^2+ T^2\|\dot F\|^2_{L^2(0,T;H^d_s)}+\frac{1}{2}\|\dot z\|^2_{L^2(0,T;V)},\\
\Big|\sum_{k=1}^i\tau_n a(\omega_n^k,(\delta z_n^k,0))\Big|&\leq \frac{1}{2}\|\A  \|^2_{{\infty}}\sum_{k=1}^i \tau_n\|eu_n^k\|^2+\frac{1}{2}\|\B\|^2_{{\infty}}\sum_{k=1}^i \tau_n\|eu^k_n-w_n^k\|^2+\sum_{k=1}^i \tau_n\|e\delta z_n^k\|^2\nonumber\\
&\leq \frac{1}{2}(\|\A\|^2_{{\infty}}+\|\B\|^2_{{\infty}})\Big[\sum_{k=1}^i \tau_n\|eu_n^k\|^2+\sum_{k=1}^i \tau_n\|eu_n^k-w_n^k\|^2\Big]+\|\dot z\|^2_{L^2(0,T;V)}.
\end{align}
Notice that the following discrete integrations by parts hold
\begin{align}
    &\sum_{k=1}^i \tau_n(\delta^2 u_n^k,\delta z^k_n)=(\delta u_n^i,\delta z_n^i)-(\delta u_n^0,\delta z_n^0)-\sum_{k=1}^i\tau_n (\delta u_n^{k-1},\delta^2 z_n^k),\label{dis-part}\\
    &\sum_{k=1}^i \tau_n(h_n^k,e\delta u^k_n)=(h_n^i,eu_n^i)-(h_n^0,eu_n^0)-\sum_{k=1}^i\tau_n (\delta h_n^{k},eu_n^{k-1}),\label{dis-part2}\\
    &\sum_{k=1}^i \tau_n(F_n^k,e\delta u^k_n)=(F_n^i,eu_n^i)-(F_n^0,eu^0)-\sum_{k=1}^i\tau_n (\delta F_n^{k},eu_n^{k-1}),\label{dis-part2.1}
\end{align}
where $\delta h_n^k$, $\delta F_n^k$, and $\delta^2 z_n^k$ are defined as in \eqref{derivate}. By \eqref{dis-part} and
\begin{equation}\label{ti2}
    \sum_{k=1}^i \tau_n \|\delta u_n^{k-1}\|^2=\sum_{k=0}^{i-1} \tau_n \|\delta u_n^k\|^2\leq T\|u^1\|^2+\sum_{k=1}^i \tau_n \|\delta u_n^k\|^2,
\end{equation}
we can write for every $\varepsilon_1>0$
\begin{align}
        \Big| \sum_{k=1}^i \tau_n(\delta^2 u_n^k,\delta z^k_n)\Big|&\leq \frac{1}{2\varepsilon_1}\|\delta z_n^i\|^2+\frac{\varepsilon_1}{2}\|\delta u_n^i\|^2+\|u^1\|\|\dot z(0)\|+ \sum_{k=1}^i \tau_n \|\delta u_n^{k-1}\|\|\delta^2 z_n^k\|\nonumber\\
    &\leq  C_{\varepsilon_1}+\|\ddot z\|^2_{L^2(0,T;H)}+\frac{\varepsilon_1}{2}\|\delta u_n^i\|^2+\frac{1}{2}\sum_{k=1}^i \tau_n \|\delta u_n^{k}\|^2,
\end{align}
where $C_{\varepsilon_1}$ is a positive constant depending on $\varepsilon_1$. Thanks to \eqref{dis-part2} and to \eqref{ti2} (applied to $eu_n^{k-1}$ in place of $\delta u_n^{k-1}$) we have for every $\varepsilon_2>0$
\begin{align}
    \Big|\sum_{k=1}^i \tau_n(h_n^k,e\delta u^k_n)\Big|&\leq \frac{1}{2\varepsilon_2}\|h_n^i\|^2+\frac{\varepsilon_2}{2}\|e u_n^i\|^2+\|eu^0\|\|\B w^0\|+ \sum_{k=1}^i \tau_n \|\delta h_n^{k}\|\|eu_n^{k-1}\|\nonumber\\
    &\leq C_{\varepsilon_2}+\frac{1}{2\beta}\|\B w^0\|^2+\frac{\varepsilon_2}{2}\|e u_n^i\|^2+\frac{1}{2}\sum_{k=1}^i \tau_n \|e u_n^{k}\|^2,
\end{align}
where $C_{\varepsilon_2}$ is a positive constant depending on $\varepsilon_2$. Moreover, notice that
$$u_n^i=\sum_{k=1}^i\tau_n\delta u_n^k+u^0,$$
hence by means of the discrete Holder's inequality
\begin{equation}\label{fudev}
    \norm{u_n^i}\leq \sum_{k=1}^i\tau_n \norm{\delta u_n^k}+\norm{u^0}\leq  T^{\frac{1}{2}}\Big(\sum_{k=1}^i\tau_n\norm{\delta u_n^k}^2\Big)^{\frac{1}{2}}+\norm{u^0}.
\end{equation}
By \eqref{dis-part2.1}, \eqref{ti2} (applied again to $eu_n^{k-1}$ in place of $\delta u_n^{k-1}$), and \eqref{fudev} we get for every $\varepsilon_3>0$
\begin{align}
   \Big| \sum_{k=1}^i \tau_n(F_n^k,e\delta u^k_n)\Big|&\leq \frac{1}{2\varepsilon_3 }\|F_n^i\|^2+\frac{\varepsilon_3 }{2}\|eu_n^i\|^2+\|F(0)\|\|eu^0\|+\sum_{k=1}^i\tau_n \|\delta F_n^{k}\|\|eu_n^{k-1}\|\nonumber\\
   &\leq C_{\varepsilon_3 }+\frac{\varepsilon_3 }{2}\|eu_n^i\|^2+ \frac{1}{2}\|\dot F\|^2_{L^2(0,T;H^d_s)}+\frac{1}{2}\sum_{k=1}^i\tau_n \|eu_n^{k}\|^2,\label{stime*}
\end{align}
where $C_{\varepsilon_3}$ is a positive constant depending on $\varepsilon_3$.

Now we consider \eqref{grwM}--\eqref{stime*}. By choosing  $\varepsilon_1=\frac{1}{2}$, $\varepsilon_2=\varepsilon_3=\frac{C_{\A  }}{4}$ and using \eqref{CB1.5} and \eqref{CB2M} we obtain the existence of two positive constants $C_1$ and $C_2$ such that
\begin{align}\label{gro*}
\frac{1}{4}\|\delta u_n^i\|^2+\frac{C_{\A  }}{4}\|eu_n^i\|^2&+\frac{C_{\B}}{2}\|eu_n^i-w_n^i\|^2+\beta C_{\B}\sum_{k=1}^i\tau_n \|\delta w_n^k\|^2\nonumber \\
&\leq C_1+C_2\sum_{k=1}^i\tau_n\Big[\|\delta u_n^k\|^2+\|eu_n^k\|^2+\|eu_n^k-w_n^k\|^2+\sum_{l=1}^k\tau_n\|\delta w_n^l\|^2\Big].
\end{align}
By defining
$$a_n^i:=\|\delta u_n^i\|^2+\norm{eu_n^i}^2+\norm{eu_n^i-w_n^i}^2+\sum_{k=1}^i\tau_n \norm{\delta w_n^k}^2,$$
from \eqref{gro*} we can derive
$$a_n^i\leq \tilde{C}_1+\tilde{C}_2\sum_{k=1}^i \tau_n a_n^k,$$
for two positive constants $\tilde{C}_1$ and $\tilde{C}_2$. Taking into account a discrete version of Gronwall's lemma (see, e.g., \cite[Lemma 3.2.4]{AGS}) we deduce that $a_n^i$ is bounded by a positive constant $C^*$ independent of $i$ and $n$; i.e. 
\begin{equation*}
\|\delta u_n^i\|^2+\norm{eu_n^i}^2+\norm{eu_n^i-w_n^i}^2+\sum_{k=1}^i\tau_n\norm{\delta w_n^k}^2\leq C^*\qquad \text{for every $i=1,\dots,n$ and for every $n\in\N$}.
\end{equation*}
Therefore 
\begin{equation*}
\|\delta u_n^i\|^2+\norm{eu_n^i}^2+\norm{w_n^i}^2+\sum_{k=1}^i\tau_n\norm{\delta w_n^k}^2\leq 3C^*\qquad \text{for every $i=1,\dots,n$ and for every $n\in\N$},
\end{equation*}
and this concludes the proof.
\end{proof}

We now want to pass to the limit into the discrete equation \eqref{unkM} to obtain a solution to the coupled system \eqref{cs}--\eqref{csi} according to Definition \ref{defsym}. We start by defining the following interpolation sequences of our limit solution
\begin{align*}
&u_n(t):=u_n^k+(t-k\tau_n)\delta u_n^k, &&\tilde{u}_n(t):=\delta u_n^k+(t-k\tau_n)\delta^2 u_n^k && t\in [(k-1)\tau_n,k\tau_n],\quad k=1,\dots,n,\\
&u^+_n(t):=u_n^k, &&\tilde{u}^+_n(t):=\delta u_n^k && t\in ((k-1)\tau_n,k\tau_n], \quad k=1,\dots,n,\\
&u^-_n(t):=u_n^{k-1}, &&\tilde{u}^-_n(t):=\delta u_n^{k-1}&& t\in [(k-1)\tau_n,k\tau_n), \quad k=1,\dots,n,
\end{align*}
and the same approximations $w_n,w_n^+,w_n^-$ for the function $w$. By using this notation, we can state the following convergence lemma.
\begin{lemma}\label{lem:convM}
There exists $(u,w)\in\V\times H^1(0,T;H^d_s)$, with $u-z\in \V^D$, such that, up to a not relabeled subsequence, we have
\begin{alignat}{4}
&u_n \xrightharpoonup[n\to \infty]{H^1(0, T;H)}u,&&\quad u^\pm_n \xrightharpoonup[n\to \infty]{L^2(0, T;V)}u,&&\quad \tilde{u}^\pm_n \xrightharpoonup[n\to \infty]{L^2(0, T;H)}\dot{u},\label{pf1M}\\
&w_n \xrightharpoonup[n\to \infty]{H^1(0, T;H^d_s)}w,&&\quad w^\pm_n \xrightharpoonup[n\to \infty]{L^2(0, T;H^d_s)}w.\label{pf2M}
\end{alignat}
\end{lemma}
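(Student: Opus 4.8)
The plan is to convert the uniform discrete estimate \eqref{eq:estM} into uniform bounds for the various interpolants in the relevant Bochner spaces, extract weakly convergent subsequences by reflexivity, and then identify all the limits; the time-dependent memberships $u(t)\in V_t$ and $u-z\in\V^D$ will be recovered by passing a convex constraint to the weak limit. I expect the only genuinely delicate point to be the identification of the limit of the backward velocity interpolant $\tilde u_n^-$, since \eqref{eq:estM} provides no control on the discrete acceleration $\sum_k\tau_n^2\|\delta^2 u_n^k\|^2$; I will get around this with a vanishing time-translation argument.

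First I would record the interpolant bounds. Since $\dot u_n\equiv\delta u_n^k$ and $\dot w_n\equiv\delta w_n^k$ on $((k-1)\tau_n,k\tau_n)$, \eqref{eq:estM} gives $\|\dot u_n\|_{L^2(0,T;H)}^2=\sum_{k=1}^n\tau_n\|\delta u_n^k\|^2\le TC^2$ and $\|\dot w_n\|_{L^2(0,T;H^d_s)}^2=\sum_{k=1}^n\tau_n\|\delta w_n^k\|^2\le C$; moreover, writing $u_n^k=u^0+\sum_{j\le k}\tau_n\delta u_n^j$ and using $\max_i\|\delta u_n^i\|,\ \max_i\|eu_n^i\|,\ \max_i\|w_n^i\|\le C$, the piecewise-constant interpolants $u_n^\pm$ are bounded in $L^\infty(0,T;V)$, $\tilde u_n^\pm$ in $L^\infty(0,T;H)$, $w_n^\pm$ and $w_n$ in $L^\infty(0,T;H^d_s)$, while $u_n$ is bounded in $H^1(0,T;H)\cap L^\infty(0,T;H)$ and $w_n$ in $H^1(0,T;H^d_s)$. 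By reflexivity of the Hilbert spaces involved, passing to a not-relabelled subsequence I obtain $u_n\rightharpoonup u$ in $H^1(0,T;H)$, $w_n\rightharpoonup w$ in $H^1(0,T;H^d_s)$, $u_n^\pm\rightharpoonup\xi^\pm$ in $L^2(0,T;V)$, $\tilde u_n^\pm\rightharpoonup\eta^\pm$ in $L^2(0,T;H)$, and $w_n^\pm\rightharpoonup\zeta^\pm$ in $L^2(0,T;H^d_s)$ for suitable limits.

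Next I would identify these limits. An elementary estimate gives $\|u_n^\pm-u_n\|_{L^2(0,T;H)}^2\le\tau_n^2\sum_k\tau_n\|\delta u_n^k\|^2\to0$ and $\|w_n^\pm-w_n\|_{L^2(0,T;H^d_s)}^2\le\tau_n^2\sum_k\tau_n\|\delta w_n^k\|^2\to0$; since $u_n\rightharpoonup u$ and $w_n\rightharpoonup w$ also weakly in $L^2$, it follows that $u_n^\pm\rightharpoonup u$ in $L^2(0,T;H)$ and $w_n^\pm\rightharpoonup w$ in $L^2(0,T;H^d_s)$, and the inclusion $L^2(0,T;V)\hookrightarrow L^2(0,T;H)$ being continuous, uniqueness of weak limits forces $\xi^\pm=u$ (hence $u\in L^2(0,T;V)$ and $u_n^\pm\rightharpoonup u$ in $L^2(0,T;V)$) and $\zeta^\pm=w$. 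For the velocity interpolants I would use $\tilde u_n^+=\dot u_n$, whence $\eta^+=\dot u$ directly; for $\tilde u_n^-$ I would note that $\tilde u_n^-$ coincides with $\dot u_n(\cdot-\tau_n)$ on $(\tau_n,T)$ and with $u^1$ on $(0,\tau_n)$, so that, testing against a fixed $\varphi\in C^\infty_c(0,T;H)$, using continuity of translations in $L^2(0,T;H)$ and the uniform $L^\infty(0,T;H)$ bound (the $(0,\tau_n)$-part contributing $O(\tau_n^{1/2})$), one gets $\int_0^T(\tilde u_n^-,\varphi)\de t\to\int_0^T(\dot u,\varphi)\de t$ and therefore $\eta^-=\dot u$. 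Finally $u\in H^1(0,T;H)$ and $w\in H^1(0,T;H^d_s)$ from the $H^1$-weak limits.

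Finally I would verify the time-dependent constraints. By (E3) one has $V_s\subset V_t$ and $V_s^D\subset V_t^D$ for $s\le t$ (the distributional gradient on the smaller set $\OG_t$ and the trace on $\partial_D\Omega$ are inherited). Since the discrete solution satisfies $u_n^k-z(k\tau_n)\in V_n^k=V^D_{k\tau_n}$, for a.e.\ $t\in[(k-1)\tau_n,k\tau_n)$ we have $u_n^-(t)-z((k-1)\tau_n)\in V^D_{(k-1)\tau_n}\subset V_t^D$, and in particular $u_n^-(t)\in V_t$. Denoting by $z_n^-$ the backward piecewise-constant interpolant of $z$, the bound $\|z_n^--z\|_{L^2(0,T;V)}^2\le\tau_n^2\|\dot z\|_{L^2(0,T;V)}^2\to0$ (valid since $z\in H^1(0,T;V)$ by (D3)) yields $u_n^--z_n^-\rightharpoonup u-z$ in $L^2(0,T;V)$. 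As the set $\{\varphi\in L^2(0,T;V):\varphi(t)\in V_t^D\ \text{for a.e.\ }t\}$ is convex and strongly closed (extract an a.e.-convergent subsequence and use closedness of $V_t^D$ in $V$), hence weakly closed, we conclude $(u-z)(t)\in V_t^D$ for a.e.\ $t$; the same argument applied to $u_n^-$ directly gives $u(t)\in V_t$ for a.e.\ $t$. Combined with the integrability obtained above this yields $(u,w)\in\V\times H^1(0,T;H^d_s)$ with $u-z\in\V^D$, completing the proof of \eqref{pf1M}--\eqref{pf2M}. Besides the handling of $\tilde u_n^-$, the passage of the time-dependent constraints to the limit, via weak closedness of convex sets together with the monotonicity of $t\mapsto V_t$, is the other step that requires a little care.
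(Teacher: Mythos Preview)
Your proposal is correct and follows essentially the same route as the paper: uniform bounds on the interpolants from \eqref{eq:estM}, weak compactness, identification of the limits via $\|u_n^\pm-u_n\|_{L^2(0,T;H)}\to0$ and the shift relation $\tilde u_n^-(t)=\tilde u_n^+(t-\tau_n)$ together with continuity of translations in $L^2$, and finally weak closedness of the convex sets $\{\varphi\in L^2(0,T;V):\varphi(t)\in V_t\ \text{a.e.}\}$ and $\{\varphi\in L^2(0,T;V):\varphi(t)\in V_t^D\ \text{a.e.}\}$ to recover the time-dependent constraints. Your handling of $\tilde u_n^-$ is in fact a shade more explicit than the paper's (which just invokes the shift relation and continuity of translations without spelling out the test-function argument or the contribution on $(0,\tau_n)$), but the idea is identical.
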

\begin{proof}
Thanks to Lemma \ref{lem:estM} the sequences 
\begin{align*}
    &\{u_n\}_n\subset H^1(0, T;H)\cap L^\infty(0, T;V),&& \{w_n\}_n\subset H^1(0, T;H^d_s)\cap L^\infty(0, T;H^d_s),\\
    &\{u_n^\pm\}_n\subset L^\infty(0, T;V),&&\{w_n^\pm\}_n\subset L^\infty(0, T;H^d_s),\\
    &\{\tilde u_n^\pm\}_n\subset L^\infty (0, T;H),
\end{align*}
are uniformly bounded.
Indeed, by means of \eqref{eq:estM} and \eqref{fudev} there exists a positive constant $\bar{C}$ such that $\|u_n^i\|_{V}\leq \bar C$ for every $n\in\N$ and $i=1,..,n$, and therefore
\begin{equation*}
\|u_n\|_{L^{\infty}(0, T;V)}\leq \max_{k=1,..,n}\ \sup_{t\in [(k-1)\tau_n,k\tau_n]}\|\left(1-k+t\tau^{-1}_n\right)u_n^k+\left(k-t\tau^{-1}_n\right)u_n^{k-1}\|_{V}\leq 2\bar C.
\end{equation*} 
By Banach-Alaoglu's Theorem there exist some functions 
$$u\in H^1(0, T;H),\quad w\in H^1(0, T;H^d_s),\quad v_1\in L^2(0, T;V),\quad v_2\in L^2(0, T;H^d_s)$$ 
such that, up to a not relabeled sequence, we have
\begin{alignat}{4}
&u_n \xrightharpoonup[n\to \infty]{L^2(0, T;V)}u, \qquad &&\dot{u}_n \xrightharpoonup[n\to \infty]{L^2(0, T;H)}\dot{u}, \qquad &&u^+_n \xrightharpoonup[n\to \infty]{L^2(0, T;V)}v_1,\label{lim1}\\
 &w_n\xrightharpoonup[n\to \infty]{L^2(0, T;H^d_s)}w,\qquad  &&\dot{w}_n \xrightharpoonup[n\to \infty]{L^2(0, T;H^d_s)}\dot{w},\qquad  &&w^+_n \xrightharpoonup[n\to \infty]{L^2(0, T;H^d_s)}v_2.\label{lim2}
\end{alignat}
Since there exists a positive constant $C$ such that
\begin{align}\label{nuova}
\|u_n-u^+_n\|_{L^\infty(0, T;H)}\leq C \tau_n\xrightarrow[n\to\infty]{}0, \quad \|w_n-w^+_n\|_{L^\infty(0, T;H^d_s)}\leq C \tau_n\xrightarrow[n\to\infty]{}0,
\end{align}
by using \eqref{lim1}, \eqref{lim2} and triangle inequality, we can conclude that $u=v_1$ and $w=v_2$.\\ 
Moreover, given that
\begin{alignat*}{4}
&u^-_n(t)=u^+_n(t-\tau_n), && \qquad w^-_n(t)=w^+_n(t-\tau_n) &&\quad \text{for } t\in(\tau_n, T),\\
&\tilde u^-_n(t)=\tilde u^+_n(t-\tau_n), &&\qquad
&&\quad \text{for } t\in(\tau_n, T),\\
&\tilde{u}^+_n(t)=\dot{u}_n(t), &&
&&\quad\text{for a.e. } t\in(0, T),
\end{alignat*}
with \eqref{nuova} and the continuity of the translations in $L^2$ we deduce that 
\begin{equation*}
u^-_n \xrightharpoonup[n\to \infty]{L^2(0, T;V)} u,\quad \tilde{u}^\pm_n \xrightharpoonup[n\to \infty]{L^2(0, T;H)}\dot{u}, \quad w^-_n \xrightharpoonup[n\to \infty]{L^2(0, T;H^d_s)} w.
\end{equation*}

Now let us check that $u\in\V$. To this aim, we define the following sets
\begin{align*}
\tilde {\V}&:=\{u\in L^2(0, T;V): u(t)\in V_t\hspace{2pt}\text{ for a.e. $t\in (0, T)$}\}\subset L^2(0, T;V),\\
\tilde {\V}^D&:=\{u\in\tilde {\V}: u(t)\in V^D_t\text{ for a.e. $t\in (0, T)$}\}\subset L^2(0, T;V).
\end{align*}
Notice that $\tilde {\V}$ is a (strong) closed convex subset of $L^2(0, T;V)$, and so by Hahn-Banach Theorem the set $\tilde {\V}$ is weakly closed. In the same way we can prove that $\tilde {\V}^D$ is also a weakly closed set. Notice that $\{u^-_n\}_n\subset \tilde {\V}$, indeed
\begin{align*}
u^-_n(t)=u^{k-1}_n\in V_{(k-1)\tau_n}\subset V_t\quad\text{for $t\in [(k-1)\tau_n,k\tau_n)$, $k=1,\dots,n$}.
\end{align*}
Since $u^-_n\xrightharpoonup[n\to\infty]{L^2(0, T;V)} u$, we conclude that $u\in \tilde {\V}$. Moreover $\tilde u^+_n\xrightharpoonup[n\to\infty]{L^2(0, T;H)} \dot u$ and so $\dot u\in L^2(0, T;H)$, from which we have $u\in \V$. Finally, to show that $u-z\in \V^D$ we observe that
\begin{align*}
u_n^-(t)-z_n^-(t)=u^{k-1}_n-z^{k-1}_n\in V_n^{k-1}\subset V_t^D\quad \text{ for $t\in[(k-1)\tau_n,k\tau_n)$, $k=1,\dots,n$},
\end{align*}
therefore $\{u_n^--z_n^-\}_n\subset \tilde {\V}^D$. Since 
$$u_n^-\xrightharpoonup[n\to\infty]{L^2(0, T;V)}u,\quad z_n^-\xrightarrow[n\to\infty]{L^2(0, T;V)}z,$$ 
we get $u-z\in\V^D$. This concludes the proof.
\end{proof}

With the next lemma we show that the limit identified by Lemma \ref{lem:convM} is actually a weak solution to the coupled system \eqref{cs}--\eqref{csi} according to Definition \ref{defsym}.
\begin{lemma}\label{limitfun}
The limit pair $(u,w)\in\V\times H^1(0,T;H^d_s)$ of Lemma \ref{lem:convM} satisfies \eqref{wweak} and \eqref{odes}.
\end{lemma}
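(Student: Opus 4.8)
The plan is to pass to the limit in the discrete Euler--Lagrange equation \eqref{unkM}, using the convergences established in Lemma \ref{lem:convM}, and treating the two identities \eqref{odes} and \eqref{wweak} separately.

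First I would dispose of the ordinary differential equation \eqref{odes}. Testing \eqref{unkM} with pairs $(0,\psi)$, $\psi\in H^d_s$, gives, as already observed right after \eqref{unkM}, $\beta\delta w_n^k+w_n^k-eu_n^k=0$ in $H^d_s$ for every $k$, i.e.\ in terms of the interpolants $\beta\dot w_n(t)+w_n^+(t)-eu_n^+(t)=0$ for a.e.\ $t\in(0,T)$. Since $\dot w_n\rightharpoonup\dot w$ and $w_n^+\rightharpoonup w$ weakly in $L^2(0,T;H^d_s)$, and $eu_n^+\rightharpoonup eu$ weakly in $L^2(0,T;H^d_s)$ (because $u_n^+\rightharpoonup u$ weakly in $L^2(0,T;V)$ and $e$ is linear and continuous), passing to the limit yields $\beta\dot w+w-eu=0$ a.e.\ in $(0,T)$, the equality holding in $H^d_s$. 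The initial condition follows immediately: $w_n(0)=w_n^0=w^0$ for every $n$, and the evaluation $v\mapsto v(0)$ is a bounded linear --- hence weakly continuous --- map from $H^1(0,T;H^d_s)$ to $H^d_s$, so $w(0)=w^0$.

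For \eqref{wweak} I would argue in the spirit of the time-discretization scheme used for the wave equation. Fix $\varphi\in\D(0,T)$; since $\varphi(k\tau_n)\in V^D_{k\tau_n}=V_n^k$, the pair $(\varphi(k\tau_n),0)$ may be used as a test in \eqref{unkM}, giving $(\delta^2 u_n^k,\varphi(k\tau_n))+((\A+\B)eu_n^k-\B w_n^k,e\varphi(k\tau_n))=(f_n^k,\varphi(k\tau_n))+(F_n^k-h_n^k,e\varphi(k\tau_n))$. Multiplying by $\tau_n$, summing over $k=1,\dots,n$, and carrying out a discrete summation by parts on $\sum_k\tau_n(\delta^2u_n^k,\varphi(k\tau_n))$ --- whose boundary contributions vanish because $\varphi$ is compactly supported in $(0,T)$, so $\varphi(T)=0$ and $\varphi$ vanishes on the first grid interval for $n$ large --- moves the second difference onto $\varphi$, producing a grid forward-difference quotient $g_n$ of $\varphi$. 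Rewriting the sums as integrals of the piecewise-constant interpolants leads to
\[
-\int_0^T(\tilde u_n^+,g_n)\,\de t+\int_0^T((\A+\B)eu_n^+-\B w_n^+,e\varphi_n^+)\,\de t=\int_0^T(f_n^+,\varphi_n^+)\,\de t+\int_0^T(F_n^+-h_n^+,e\varphi_n^+)\,\de t,
\]
where $\varphi_n^+,g_n,f_n^+,F_n^+,h_n^+$ are the step functions taking on $((k-1)\tau_n,k\tau_n]$ the values $\varphi(k\tau_n)$, the forward-difference quotient of $\varphi$, $f_n^k$, $F(k\tau_n)$, $\e^{-k\tau_n/\beta}\B w^0$ respectively. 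Now $\varphi_n^+\to\varphi$ and $g_n\to\dot\varphi$ strongly in $L^2(0,T;V)$ by smoothness of $\varphi$, $F_n^+\to F$ and $h_n^+\to\e^{-\cdot/\beta}\B w^0$ strongly in $L^2(0,T;H^d_s)$ by continuity in time, and $f_n^+\to f$ strongly in $L^2(0,T;H)$ (a standard approximation-by-averages argument), while $\tilde u_n^+\rightharpoonup\dot u$, $eu_n^+\rightharpoonup eu$ and $w_n^+\rightharpoonup w$ weakly by Lemma \ref{lem:convM}; all pairings being of weak-times-strong type, I pass to the limit term by term and obtain
\[
-\int_0^T(\dot u,\dot\varphi)\,\de t+\int_0^T((\A+\B)eu-\B w,e\varphi)\,\de t=\int_0^T(f,\varphi)\,\de t+\int_0^T(F,e\varphi)\,\de t-\int_0^T\e^{-\frac{t}{\beta}}(\B w^0,e\varphi)\,\de t,
\]
which is exactly \eqref{wweak} after splitting $(\A+\B)eu-\B w=\A eu+\B(eu-w)$. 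Since $\varphi\in\D(0,T)$ was arbitrary, this concludes the proof.

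The step requiring the most care is the treatment of the inertial term: performing the discrete summation by parts, checking that the endpoint terms vanish by the compact support of $\varphi$, and identifying the limit as $-\int_0^T(\dot u,\dot\varphi)$ through the strong convergence of the grid difference quotients $g_n\to\dot\varphi$ paired with the weak convergence $\tilde u_n^+\rightharpoonup\dot u$. The only other non-immediate point is the strong $L^2(0,T;H)$ convergence of the averaged forcing $f_n^+\to f$; everything else is a routine weak-strong passage to the limit.
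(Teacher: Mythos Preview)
Your proof is correct and follows essentially the same route as the paper's: test \eqref{unkM} with $(\varphi(k\tau_n),0)$, perform a discrete summation by parts on the inertial term using $\varphi(0)=\varphi(T)=0$, rewrite the sums as integrals of piecewise-constant interpolants, and pass to the limit via weak--strong pairings. The only notable difference is in the handling of \eqref{odes}: the paper tests against a time-dependent $\psi_n^+(t)$, integrates over an interval $(a,b)$, passes to the limit in $n$, and then lets $a\to b$ using Lebesgue differentiation, whereas you observe more directly that $\beta\dot w_n+w_n^+-eu_n^+=0$ holds as an identity in $L^2(0,T;H^d_s)$ and pass to the weak limit there --- a cleaner argument that avoids the extra localization step.
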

\begin{proof}
We fix $n\in\N$ and the functions $\varphi\in \D$ and $\psi\in C_c^{\infty}(0, T;H^d_s)$. We consider the following piecewise-constant approximating sequences 
\begin{alignat*}{3}
\varphi_n^k&:=\varphi(k\tau_n)\qquad\qquad\psi_n^k&&:=\psi(k\tau_n)&&\qquad\text{for $k=0,\dots,n$,}\\    
\delta \varphi_n^k&:=\frac{\varphi_n^k-\varphi_n^{k-1}}{\tau_n}\qquad\delta \psi_n^k&&:=\frac{\psi_n^k-\psi_n^{k-1}}{\tau_n}&&\qquad \text{for }k=1,\dots,n,
\end{alignat*}
and the approximating sequences
\begin{align*}
&\varphi^+_n(t):=\varphi_n^k, & & \tilde\varphi^+_n(t):=\delta\varphi_n^k & & t\in ((k-1)\tau_n,k\tau_n], \quad k=1,\dots,n,\\
&\psi^+_n(t):=\psi_n^k, & & \tilde\psi^+_n(t):=\delta\psi_n^k & & t\in ((k-1)\tau_n,k\tau_n], \quad k=1,\dots,n.
\end{align*}
If we use $\tau_n(\varphi_n^k,0)\in V_n^k\times H^d_s$ as a test function in \eqref{unkM}, after summing over $k=1,...,n$, we get
\begin{align}\label{limitM}
\sum_{k=1}^n\tau_n(\delta^2u_n^k,\varphi^k_n)&+\sum_{k=1}^n\tau_n((\A +\B)  eu_n^k-\B w_n^k,e\varphi^k_n)\nonumber\\
&=\sum_{k=1}^n\tau_n(f_n^k,\varphi^k_n)+\sum_{k=1}^n\tau_n(F_n^k,e\varphi^k_n)-\sum_{k=1}^n\tau_n(h_n^k,e\varphi^k_n).
\end{align}
Since $\varphi_n^0=\varphi_n^n=0$ we obtain
\begin{align*}
\sum_{k=1}^n \tau_n(\delta^2 u^k_n,\varphi^k_n)
&=\sum_{k=1}^{n} (\delta u^k_n,\varphi^k_n)-\sum_{k=1}^n (\delta u^{k-1}_n,\varphi^k_n)=\sum_{k=0}^{n-1} (\delta u^k_n,\varphi^k_n)-\sum_{k=0}^{n-1} (\delta u^{k}_n,\varphi^{k+1}_n)\\
&=-\sum_{k=0}^{n-1} \tau_n (\delta u^k_n,\delta \varphi^{k+1}_n)=-\sum_{k=1}^n\tau_n(\delta u^{k-1}_n,\delta \varphi^k_n)=-\int_0^{ T}(\tilde{u}^-_n(t),\tilde{\varphi}^+_n(t)) \de t,
\end{align*}
and from \eqref{limitM} we deduce
\begin{align}\label{eqappM}
-\int_0^{ T}(\tilde{u}^-_n(t),\tilde{\varphi}^+_n(t)) \de t&+\int_0^{ T}((\A +\B) e u^+_n(t)-\B w_n^+(t),e \varphi^+_n(t)) \de t\nonumber\\
&=\int_0^{ T}(f^+_n(t),\varphi^+_n(t)) \de t+\int_0^{ T}(F^+_n(t),e\varphi^+_n(t)) \de t-\int_0^{ T}(h^+_n(t),e\varphi^+_n(t)) \de t.
\end{align}
Thanks to \eqref{pf1M}, \eqref{pf2M}, and the convergences 
\begin{equation*}
\varphi^+_n\xrightarrow[n\to\infty]{L^2(0,{ T};V)}\varphi, \quad \tilde{\varphi}^+_n\xrightarrow[n\to\infty]{L^2(0,{ T};H)}\dot{\varphi}
\end{equation*}
we can pass to the limit in \eqref{eqappM}, and we get that $u\in\V$ satisfies \eqref{wweak} for every function $\varphi\in \D$. 

If we use $\tau_n(0,\psi_n^k)\in V_n^k\times H^d_s$ as a test function in \eqref{unkM}, we have
\begin{equation*}
(\beta\delta w_n^k+w_n^k-eu_n^k,\psi^k_n)=0,
\end{equation*}
which corresponds to
$$(\beta\dot w_n(t)+w_n^{+}(t)-eu_n^{+}(t),\psi^{+}_n(t))=0\qquad t\in ((k-1)\tau_n,k\tau_n], \quad k=1,\dots,n.$$
Therefore, for every $(a,b)\subset(0, T)$, from \eqref{pf1M} and \eqref{pf2M}, we can write
\begin{equation}\label{lim2eq}
0=\lim_{n\to\infty}\dashint_a^b  (\beta\dot w_n(t)+w_n^{+}(t)-eu_n^{+}(t),\psi^{+}_n(t))\de t  =\dashint_a^b  (\beta\dot w(t)+w(t)-eu(t),\psi(t))\de t.
\end{equation}
Now we pass to the limit in \eqref{lim2eq} as $a\to b$ and  we obtain
$$(\beta\dot w(b)+w(b)-eu(b),\psi(b))=0\qquad \text{for every $b\in [0, T]$.}$$
Given that, fixed $b\in(0, T)$ for every $\textbf{p}\in H^d_s$ there exists $\psi_{\textbf{p}}(t):=(t+1-b)\textbf{p}\in H^1(0,T;H^d_s)$ such that $\psi_{\textbf{p}}(b)=\textbf{p}$, we can say that for a.e. $t\in(0, T)$ we have $\beta\dot w(t)+w(t)-eu(t)=0$ in $H^d_s$. Finally, since $w_n(0)=w^0$, taking into account \eqref{pf2M} we can conclude that $w(0)=w^0$.
\end{proof}
 
It remains to show that the limit previously found assumes the initial data in the sense of \eqref{ini*}. Before doing this, let us recall the following result, whose proof can be found for example in~\cite{DL}.

\begin{lemma}\label{lem:wcM}
Let $X,Y$ be reflexive Banach spaces such that $X\hookrightarrow Y$ continuously. Then 
$$L^{\infty}(0, T;X)\cap C^0_w([0, T];Y)=  C^0_w([0, T];X).$$
\end{lemma}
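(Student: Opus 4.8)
The plan is to establish the two inclusions separately, disposing of the routine one first. If $u\in C^0_w([0,T];X)$, then for every $x'\in X'$ the scalar map $t\mapsto\langle x',u(t)\rangle_{X'}$ is continuous on the compact interval $[0,T]$, hence bounded; thus $\{u(t):t\in[0,T]\}$ is weakly bounded in $X$, and therefore norm bounded by the uniform boundedness principle, so $u\in L^\infty(0,T;X)$. Moreover, writing $\iota\colon X\hookrightarrow Y$ for the continuous embedding, every $y'\in Y'$ yields $y'\circ\iota\in X'$, so $t\mapsto\langle y',u(t)\rangle_{Y'}=\langle y'\circ\iota,u(t)\rangle_{X'}$ is continuous; hence $u\in C^0_w([0,T];Y)$. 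This gives $C^0_w([0,T];X)\subseteq L^\infty(0,T;X)\cap C^0_w([0,T];Y)$.

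For the converse inclusion, I would fix $u\in L^\infty(0,T;X)\cap C^0_w([0,T];Y)$ and let $M>0$ be such that $\|u(t)\|_X\le M$ for a.e.\ $t\in(0,T)$. The first step is to upgrade this to: $u(t)\in X$ with $\|u(t)\|_X\le M$ for \emph{every} $t\in[0,T]$. Given $t_0\in[0,T]$, choose $t_n\to t_0$ with $\|u(t_n)\|_X\le M$ for all $n$; since $X$ is reflexive, along a subsequence $u(t_{n_k})\rightharpoonup\xi$ weakly in $X$ for some $\xi\in X$ with $\|\xi\|_X\le M$. Applying $\iota$ gives $u(t_{n_k})\rightharpoonup\iota(\xi)$ weakly in $Y$, while $u\in C^0_w([0,T];Y)$ gives $u(t_{n_k})\rightharpoonup u(t_0)$ weakly in $Y$; by uniqueness of weak limits in $Y$ and injectivity of $\iota$ we conclude $u(t_0)=\xi\in X$ and $\|u(t_0)\|_X\le M$.

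It then remains to prove weak continuity of $u$ into $X$, which I would do by contradiction. Suppose not: then there exist $t_0\in[0,T]$, a sequence $t_n\to t_0$, a functional $x'\in X'$, and $\varepsilon>0$ with $|\langle x',u(t_n)-u(t_0)\rangle_{X'}|\ge\varepsilon$ for all $n$. By the first step $\{u(t_n)\}_n$ is bounded in $X$, so by reflexivity a subsequence satisfies $u(t_{n_k})\rightharpoonup\xi$ weakly in $X$; arguing exactly as above (push forward by $\iota$, compare with the $Y$-weak limit $u(t_0)$, use injectivity) forces $\xi=u(t_0)$, whence $\langle x',u(t_{n_k})\rangle_{X'}\to\langle x',u(t_0)\rangle_{X'}$, contradicting the choice of the sequence. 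Hence $u\in C^0_w([0,T];X)$. The only delicate point is the repeated transfer between weak convergence in $X$ and in $Y$: it relies on the embedding being both continuous (to pull $Y'$ back into $X'$, transferring $X$-weak convergence to $Y$) and injective (to turn an identity of $Y$-weak limits into an identity in $X$), together with reflexivity of $X$ to extract weakly convergent subsequences of norm-bounded sequences; modulo this bookkeeping, the argument is entirely standard.
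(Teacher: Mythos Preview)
Your proof is correct and follows the standard route for this classical result. Note, however, that the paper does not actually prove this lemma: it is stated as a recalled fact, with the proof deferred to the reference~\cite{DL} (Dautray--Lions). There is therefore no argument in the paper to compare against; your proposal simply supplies the details that the paper chose to omit.
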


\begin{proposition}\label{prop-main}
The limit pair $(u,w)\in \mathcal{V}\times  H^1(0,T;H^d_s)$ of Lemma \ref{lem:convM} is a weak solution to the coupled system \eqref{cs}--\eqref{csi}. Moreover, $u\in C^0_w([0,T];V)$, $\dot u\in C^0_w([0,T];H)$ and it admits a distributional derivative in the space $L^2(0,T;(V_0^D)')$.
\end{proposition}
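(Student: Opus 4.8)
The plan is to verify that the only requirements of Definition \ref{defsym} not already established in Lemmas \ref{lem:convM} and \ref{limitfun} — namely the initial conditions \eqref{ini*} — hold for the limit pair $(u,w)$, and then to read off the regularity from Lemma \ref{lem:wcM}. As a preliminary step I would note that, substituting the representation \eqref{sol-ode} of $w$ into \eqref{wweak} exactly as in the proof of Theorem \ref{equivalence}, the function $u$ satisfies the viscoelastic weak equation \eqref{first_weak}; hence, repeating the argument of Proposition \ref{dev2} (which does not use that $u$ vanishes on $\partial_D\Omega$, or, equivalently, rerunning it with \eqref{wweak} directly, defining $\langle\Lambda(t),\vs\rangle:=-(\A eu(t),e\vs)-(\B(eu(t)-w(t)),e\vs)+(f(t),\vs)+(F(t),e\vs)-\e^{-t/\beta}(\B w^0,e\vs)$), the distributional derivative of $\dot u$ lies in $L^2(0,T;(V_0^D)')$. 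In particular $\dot u\in H^1(0,T;(V_0^D)')$ has a continuous representative with $\dot u(t)\to\dot u(0)$ in $(V_0^D)'$ as $t\to 0^+$ (Remark \ref{udotcon}); this already yields the last assertion of the proposition.

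Next I would identify the initial data. For the displacement: since $u\in H^1(0,T;H)$, its continuous representative satisfies $u(t)\to u(0)$ in $H$, while point evaluation at $0$ is linear and continuous, hence weakly continuous, from $H^1(0,T;H)$ into $H$; thus $u_n\rightharpoonup u$ in $H^1(0,T;H)$ (see \eqref{pf1M}) together with $u_n(0)=u_n^0=u^0$ forces $u(0)=u^0$, so $\|u(t)-u^0\|\to 0$. For the velocity the idea is to work with the piecewise affine interpolant $\tilde u_n$ of the discrete velocities, which satisfies $\tilde u_n(0)=\delta u_n^0=u^1$ by \eqref{prime-def}--\eqref{derivate} and $\dot{\tilde u}_n\equiv\delta^2 u_n^k$ on $((k-1)\tau_n,k\tau_n)$. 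Testing \eqref{unkM} with $(\varphi,0)$ for $\varphi\in V_0^D\subset V_n^k$ gives $|(\delta^2 u_n^k,\varphi)|\le C(\|eu_n^k\|+\|w_n^k\|+\|f_n^k\|+\|F_n^k\|+\|h_n^k\|)\|\varphi\|_V$; squaring, multiplying by $\tau_n$, summing over $k$ and using Lemma \ref{lem:estM} together with $\sum_k\tau_n\|f_n^k\|^2\le\|f\|_{L^2(0,T;H)}^2$, $\sup_k\|F_n^k\|\le\|F\|_{C^0([0,T];H^d_s)}$ and $\|h_n^k\|\le\|\B w^0\|$, I would conclude that $\{\dot{\tilde u}_n\}_n$ is bounded in $L^2(0,T;(V_0^D)')$. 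Since moreover $\{\tilde u_n\}_n$ is bounded in $L^\infty(0,T;H)\hookrightarrow L^2(0,T;(V_0^D)')$ and $\|\tilde u_n-\tilde u_n^+\|_{L^2(0,T;(V_0^D)')}\le\tau_n\|\dot{\tilde u}_n\|_{L^2(0,T;(V_0^D)')}\to 0$, the convergence $\tilde u_n^+\rightharpoonup\dot u$ in $L^2(0,T;H)$ from \eqref{pf1M} upgrades (along the same subsequence) to $\tilde u_n\rightharpoonup\dot u$ in $H^1(0,T;(V_0^D)')$; evaluating at $t=0$ gives $u^1=\tilde u_n(0)\rightharpoonup\dot u(0)$, i.e. $\dot u(0)=u^1$ in $(V_0^D)'$, whence the second limit in \eqref{ini*}. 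With \eqref{ini*} in hand, all conditions of Definition \ref{defsym} are satisfied and $(u,w)$ is a weak solution to \eqref{cs}--\eqref{csi}.

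Finally, for the weak-continuity statements: $u\in C^0_w([0,T];V)$ follows from $u\in L^\infty(0,T;V)$ (established in the proof of Lemma \ref{lem:convM}) and $u\in H^1(0,T;H)\subset C^0([0,T];H)\subset C^0_w([0,T];H)$ via Lemma \ref{lem:wcM} with $X=V$, $Y=H$; and $\dot u\in C^0_w([0,T];H)$ follows from $\dot u\in L^\infty(0,T;H)$ (a consequence of $\max_i\|\delta u_n^i\|\le C$ in Lemma \ref{lem:estM} and \eqref{pf1M}) and $\dot u\in H^1(0,T;(V_0^D)')\subset C^0([0,T];(V_0^D)')$ via Lemma \ref{lem:wcM} with $X=H$, $Y=(V_0^D)'$, the inclusion $H\hookrightarrow(V_0^D)'$ being continuous since $\|h\|_{(V_0^D)'}\le\|h\|$. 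The step I expect to be the real obstacle is the identification $\dot u(0)=u^1$: everything there hinges on first producing the $L^2(0,T;(V_0^D)')$-bound on the interpolated accelerations by returning to the discrete Euler–Lagrange equation \eqref{unkM} and combining it carefully with the energy estimate of Lemma \ref{lem:estM} and the data assumptions — the remaining manipulations being routine bookkeeping and invocations of the cited lemmas.
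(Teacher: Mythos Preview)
Your proposal is correct and follows essentially the same route as the paper: bound the discrete accelerations $\delta^2 u_n^k$ in $(V_0^D)'$ by testing \eqref{unkM} with $(\varphi,0)$ and invoking Lemma \ref{lem:estM}, upgrade the convergence of $\tilde u_n$ to weak $H^1(0,T;(V_0^D)')$, evaluate at $t=0$ to identify $u(0)=u^0$ and $\dot u(0)=u^1$, and then appeal to Lemma \ref{lem:wcM} for the weak-continuity statements. The only difference is cosmetic: you first derive $\ddot u\in L^2(0,T;(V_0^D)')$ by rerunning Proposition \ref{dev2} on the limit equation, whereas the paper extracts this directly from the discrete $H^1(0,T;(V_0^D)')$ bound that you compute anyway for the initial-velocity identification.
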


\begin{proof}
From the discrete equation \eqref{unkM} we deduce
\begin{equation*}
|(\delta^2 u^k_n,\varphi)|\leq  \|\A  \|_{{\infty}}\|e u^k_n\|+\|\B\|_{{\infty}}\|eu_n^k-w_n^k\|+\beta\|\B\|_{\infty}\|\delta w^k_n\|+\| f^k_n\|+\|F_n^k\|+\| h^k_n\|,
\end{equation*}
 for every $(\varphi,\psi)\in V^D_0\times H^d_s \subset V_n^k\times H^d_s$ such that $\|(\varphi,\psi)\|_{V\times H^d_s}\leq 1$. Therefore, taking the supremum over $(\varphi,\psi)\in V^D_0\times H^d_s$ with $\norm{(\varphi,\psi)}_{V\times H^d_s}\leq1$, we obtain the existence of a positive constant $C'$ such that 
\begin{equation*}
\|\delta^2 u^k_n\|_{(V^D_0)'}^2\leq C'(\|e u^k_n\|^2+\|eu_n^k-w_n^k\|^2+\|\delta w^k_n\|^2+\|f^k_n\|^2+\|F_n^k\|^2+\|h^k_n\|^2).
\end{equation*}
By multiplying this inequality by $\tau_n$ and then by summing over $k=1,\dots,n$, we get
\begin{align}\label{d2M}
&\sum_{k=1}^n\tau_n\|\delta^2 u^k_n\|^2_{(V^D_0)'}\leq C'\Big(\sum_{k=1}^n\tau_n\|e u^k_n\|^2+\sum_{k=1}^n\tau_n\|eu_n^k-w_n^k\|^2+\sum_{k=1}^n\tau_n\|\delta w^k_n\|^2+C''\Big),
\end{align}
where
$$C''\hspace{-2pt}:=\|f\|_{L^2(0,T;H)}^2+\|F\|_{L^2(0,T;H^d_s)}^2+T\|\B w^0\|^2.$$
Thanks to \eqref{d2M} and Lemma \ref{lem:estM} we conclude that there exists a positive constant $\tilde C$, which does not depend on $n$, such that
\begin{equation}\label{dev3}
    \sum_{k=1}^n\tau_n\|\delta^2 u^k_n\|^2_{(V^D_0)'}\leq\tilde C.
\end{equation}
In particular $\{\tilde u_n\}_n\subset H^1(0, T;(V^D_0)')$ is uniformly bounded (notice that $\dot{\tilde u}_n(t)=\delta^2 u_n^k$ for $t\in((k-1)\tau_n,k\tau_n)$ and $k=1,\dots,n$). Hence, up to extracting a further (not relabeled) subsequence  from the one of Lemma \ref{lem:convM}, we have
\begin{equation}\label{newM}
\tilde{u}_n\xrightharpoonup[n\to\infty]{H^1(0, T;(V^D_0)')}v,
\end{equation}
and by using the following estimate
\begin{equation*}
\|\tilde{u}_n-\tilde{u}^+_n\|^2_{L^2(0, T;(V^D_0)')}\leq\tilde C \tau^2_n\xrightarrow[n\to\infty]{}0,
\end{equation*}
we conclude that $v=\dot{u}$.

Since $H^1(0, T;(V^D_0)')\hookrightarrow C^0([0, T],(V^D_0)')$, by using Lemma \ref{lem:convM} and Lemma \ref{lem:wcM} we deduce that the limit pair $(u,w)\in \V\times  H^1(0,T;H^d_s)$ satisfies
 \begin{equation*}
  u\in C^0_w([0, T];V)\quad \text{and}\quad \dot{u}\in C^0_w([0, T];H).
\end{equation*}
By \eqref{pf1M} and \eqref{newM} we then obtain
\begin{equation}\label{prog}
u_n(t)\xrightharpoonup[n\to\infty]{H}u(t)\quad\text{and}\quad  \tilde{u}_n(t)\xrightharpoonup[n\to\infty]{(V^D_0)'}\dot{u}(t) \qquad\text{for every $t\in [0, T]$,}
\end{equation}
so that $u(0)=u^0$ and $\dot u(0)=u^1$, since $u_n(0)=u^0$ and $ {\tilde{u}}_n(0)=u^1$. By Lemma \ref{limitfun} we get the thesis.
\end{proof}

\subsection{Energy Estimate}\label{sub4} In this subsection, we prove an energy-dissipation inequality which holds for the weak solution $(u,w)\in\V\times H^1(0,T;H^d_s)$ to the coupled system \eqref{cs}--\eqref{csi}, provided by Lemma \ref{lem:convM}. Thanks to this, we are able to show the validity of the initial conditions in a stronger sense. The energy-dissipation inequality give us a relation among the mechanical energy, defined by the sum of kinetic and elastic energy, the dissipation and the total work exerted by external forces and by the boundary conditions. Therefore, let us define the total energy as
\begin{equation}\label{energy-mecc}
    \En_{u,w}(t):=\frac{1}{2}\norm{\dot u(t)}^2+\frac{1}{2}(\A  eu(t),eu(t))+\frac{1}{2}(\B(eu(t)-w(t)),eu(t)-w(t)).
\end{equation}
Notice that $\mathcal {E}_{u,w}(t)$ is well defined for every time $t\in[0, T]$ since $u\in C_w^0([0, T];V)$, $\dot u\in C_w^0([0, T];H)$ and $w\in C^0([0, T];H^d_s)$, and that 
\begin{equation*}
    \En_{u,w}(0)=\frac{1}{2} {\norm{ u^1}^2}+\frac{1}{2}(\A  eu^0,eu^0)+\frac{1}{2}(\B(eu^0-w^0),eu^0-w^0).
\end{equation*}
The dissipation, on the interval $[0,t]$, is defined by
\begin{equation}\label{dissi}
    \Dis_{u,w}(t):=\beta\int_0^t(\B \dot w(\tau),\dot w(\tau))\de \tau,
\end{equation}
and the total work is given by
\begin{align}\label{ttwork}
  \W_{tot}(t):&=\int_0^t [(f(\tau),\dot u(\tau)-\dot z(\tau))-(\dot F(\tau),eu(\tau)-ez(\tau))+((\A +\B)  eu(\tau)-\B w(\tau),e\dot z(\tau))]\de \tau\nonumber\\
  &\quad-\int_0^t(\dot u(\tau),\ddot z(\tau))\de \tau+(\dot{u}(t),\dot{z}(t)) \hspace{-1pt}-\hspace{-1pt}(u^1,\dot{z}(0))\hspace{-1pt}+\hspace{-1pt}(F(t),eu(t)-ez(t))\hspace{-1pt}-\hspace{-1pt}(F(0),eu^0-ez(0))\nonumber\\
  &\quad+\int_0^t\Big[\e^{-\frac{\tau}{\beta}}(\B w^0,e\dot z(\tau))-\frac{1}{\beta}\e^{-\frac{\tau}{\beta}}(\B w^0,eu(\tau))\Big]\de \tau-\e^{-\frac{t}{\beta}}(\B w^0,eu(t))+(\B w^0,eu^0).
\end{align}

\begin{remark}
From the classical point of view, the total work on the solution $(u,w)$ at time $t\in[0, T]$ is given by
\begin{equation}\label{totalwork2M}
    \W_C(t):=\W_{load}(t)+\W_{bdry}(t),
\end{equation}
where $\W_{load}(t)$ is the work on the solution at time $t\in[0, T]$ due to the loading term, which is defined as
\begin{equation}\label{cl1}
   \W_{load}(t):=\int_0^t(f(\tau), \dot u(\tau))\de \tau+\int_0^t(\div(\e^{-\frac{\tau}{\beta}}\B w^0-F(\tau)), \dot u(\tau))\de \tau,
\end{equation}
and $\W_{bdry}(t)$ is the work on the solution at time $t\in[0, T]$ due to the varying boundary conditions, which one expects to be equal to
\begin{align*}
   \W_{bdry}(t):&=\int_0^t((F_+(\tau)-\e^{-\frac{\tau}{\beta}}\B w^0_+)\nu,\dot u(\tau))_{L^2(\Gamma_{\tau})}\de \tau+\int_0^t((F_-(\tau)-\e^{-\frac{\tau}{\beta}}\B w^0_-)\nu,\dot u(\tau))_{L^2(\Gamma_{\tau})}\de \tau\\
   &+\int_0^t((F(\tau)-\e^{-\frac{\tau}{\beta}}\B w^0)\nu,\dot u(\tau))_{H^N}\de \tau+\int_0^t(((\A +\B) e u(\tau)-\B w(\tau))\nu,\dot z(\tau))_{H^D}\de \tau,
\end{align*}
where  $F_+(t)$, $w^0_+$ and $F_-(t)$, $w^0_-$ are the traces of $F(t)$ and $w^0$, respectively, from above and below on $\Gamma_t$.

Unfortunately, $\W_{load}(t)$ and $\W_{bdry}(t)$ are not well defined under our assumptions on $u$, $F$, and $w^0$. However, if we suppose more regularity, i.e., 
$$u\in H^1(0,T;H^2(\OG;\R^d))\cap H^2(0,T;H),\quad w\in H^1(0,T;H^1(\OG;\R^{d\times d}_{sym}))),\quad F\in H^1(0,T;H^1(\OG;\R_{sym}^{d\times d})),$$
$w^0\in V_0$, and that $\Gamma$ is a smooth manifold, then we can deduce from \eqref{wweak}, \eqref{odes}, and \eqref{ini*} that the pair $(u,w)$ satisfies 
\begin{equation}\label{newsys}
\left\{
\begin{alignedat}{4}
&\ddot u(t)-\div(\A   eu(t))-\div(\B( eu(t)-w(t)))=f(t)+\div(\e^{-\frac{t}{\beta}}\B w^0-F(t)) &&\quad &&\text{in $\OG_t$}, &&\quad\text{$t\in(0,T)$},\\
 &\beta \dot w(t)+w(t)-eu(t)=0 
\end{alignedat}
\right.
\end{equation}
with boundary and initial conditions
\begin{alignat*}{4}
&u(t)=z(t) && \qquad \text{on $\partial_D\Omega$}, && \quad t\in(0,T),\\
&[(\A +\B)  eu(t)-\B w(t)] \nu=[F(t)-\e^{-\frac{t}{\beta}}\B w^0]\nu && \qquad \text{on } \partial_N\Omega, &&\quad t\in(0,T),\\
&[(\A +\B)  eu_+(t)-\B w_+(t)] \nu=[F_+(t)-\e^{-\frac{t}{\beta}}\B w_+^0]\nu && \qquad \text{on } \Gamma_t, &&\quad t\in(0,T),\\
&[(\A +\B)  eu_-(t)-\B w_-(t)] \nu=[F_-(t)-\e^{-\frac{t}{\beta}}\B w_-^0]\nu && \qquad \text{on } \Gamma_t, &&\quad t\in(0,T),\\
& u(0)=u^0, \quad w(0)=w^0,\quad  \dot u(0)=u^1,
\end{alignat*}

In this case, $((\A +\B)  e u-w)\nu\in L^2(0,T;H^D)$ and by using~\eqref{newsys}, together with the divergence theorem and the integration by parts formula, we deduce
\begin{align}\label{cl}
&\int_0^t(((\A +\B)  e u(\tau)-\B w(\tau))\nu,\dot z(\tau))_{H^D}\de \tau=\int_0^t((\A +\B) e u(\tau)-\B w(\tau),e \dot z(\tau))\de \tau\nonumber\\
&\hspace{1cm}+\int_0^t\left[(\div((\A +\B)  e u(\tau)-\B w(\tau)),\dot z(\tau))+((\e^{-\frac{\tau}{\beta}}\B w^0-F(\tau))\nu,\dot z(\tau))_{H^N}\right]\de \tau\nonumber\\
&\hspace{1cm}+\int_0^t\left[((\e^{-\frac{\tau}{\beta}}\B w^0_+-F_+(\tau))\nu,\dot z(\tau))_{L^2(\Gamma_{\tau})}+((\e^{-\frac{\tau}{\beta}}\B w^0_--F_-(\tau))\nu,\dot z(\tau))_{L^2(\Gamma_{\tau})}\right]\de \tau\nonumber\\
&\hspace{0.1cm}=\int_0^t\left[((\A +\B) e u(\tau)-\B w(\tau),e \dot z(\tau))+((\e^{-\frac{\tau}{\beta}}\B w^0-F(\tau))\nu,\dot z(\tau))_{H^N}\right]\de \tau\nonumber\\
&\hspace{1cm}+\int_0^t\left[(\ddot u(\tau),\dot z(\tau))-(f(\tau),\dot z(\tau))+(\div F(\tau),\dot z(\tau))-\e^{-\frac{\tau}{\beta}}(\div(\B w^0),\dot{z}(\tau))\right]\de \tau\nonumber\\
&\hspace{1cm}+\int_0^t\left[((\e^{-\frac{\tau}{\beta}}\B w^0_+-F_+(\tau))\nu,\dot z(\tau))_{L^2(\Gamma_{\tau})}+((\e^{-\frac{\tau}{\beta}}\B w^0_--F_-(\tau))\nu,\dot z(\tau))_{L^2(\Gamma_{\tau})}\right]\de \tau\nonumber\\
&\hspace{0.1cm}=\int_0^t\left[((\A +\B) e u(\tau)-\B w(\tau),e \dot z(\tau))+((\e^{-\frac{\tau}{\beta}}\B w^0-F(\tau))\nu,\dot z(\tau))_{H^N}\right]\de \tau+(\dot{u}(t),\dot{z}(t)) -(u^1,\dot{z}(0))\nonumber\\
&\hspace{1cm}-\int_0^t\left[(\dot u(\tau),\ddot z(\tau))+(f(\tau),\dot z(\tau))-(\div F(\tau),\dot z(\tau))+\e^{-\frac{\tau}{\beta}}(\div(\B w^0),\dot{z}(\tau))\right]\de \tau\nonumber\\
&\hspace{1cm}+\int_0^t\left[((\e^{-\frac{\tau}{\beta}}\B w^0_+-F_+(\tau))\nu,\dot z(\tau))_{L^2(\Gamma_{\tau})}+((\e^{-\frac{\tau}{\beta}}\B w^0_--F_-(\tau))\nu,\dot z(\tau))_{L^2(\Gamma_{\tau})}\right]\de \tau.
\end{align}
From \eqref{cl} and the definition of $\W_{bdry}$, we have
\begin{align}\label{cl2}
\W_{bdry}(t)&=\int_0^t\left[((\A +\B) e u(\tau)-\B w(\tau),e \dot z(\tau))+((F(\tau)-\e^{-\frac{\tau}{\beta}}\B w^0)\nu,\dot{u}(\tau)-\dot z(\tau))_{H^N}\right]\de \tau \nonumber\\
&\hspace{0.5cm}-\int_0^t\left[(\dot u(\tau),\ddot z(\tau))+(f(\tau),\dot z(\tau))-(\div (F(\tau)-\e^{-\frac{\tau}{\beta}}\B w^0),\dot z(\tau))\right]\de \tau-(u^1,\dot{z}(0))+(\dot{u}(t),\dot{z}(t))\nonumber\\
&\hspace{0.5cm}+\int_0^t\left[((F_+(\tau)-\e^{-\frac{\tau}{\beta}}\B w^0_+)\nu,\dot{u}(\tau)-\dot z(\tau))_{L^2(\Gamma_{\tau})}+((F_-(\tau)-\e^{-\frac{\tau}{\beta}}\B w^0_-)\nu,\dot{u}(\tau)-\dot z(\tau))_{L^2(\Gamma_{\tau})}\right]\de \tau.
\end{align}
Taking into account \eqref{cl1} and \eqref{cl2}, the classical work \eqref{totalwork2M} can be written as 
\begin{align*}
    \W_C(t)&=\int_0^t\left[(f(\tau), \dot u(\tau)-\dot z(\tau))+((\A +\B) e u(\tau)-\B w(\tau),e \dot z(\tau))\right]\de \tau \\
    &\hspace{0.9cm}-\int_0^t(\dot{u}(\tau),\ddot{z}(\tau))\de \tau+(\dot{u}(t),\dot{z}(t))-(u^1,\dot{z}(0)) \\
    &\hspace{0.9cm}+\int_0^t\left[((F_+(\tau)-\e^{-\frac{\tau}{\beta}}\B w^0_+)\nu,\dot{u}(\tau)-\dot z(\tau))_{L^2(\Gamma_{\tau})}+((F_-(\tau)-\e^{-\frac{\tau}{\beta}}\B w^0_-)\nu,\dot{u}(\tau)-\dot z(\tau))_{L^2(\Gamma_{\tau})}\right]\de \tau\\
     &\hspace{0.9cm}-\int_0^t\left[(\div(F(\tau)-\e^{-\frac{\tau}{\beta}}\B w^0),\dot{u}(\tau)-\dot{z}(\tau))-((F(\tau)-\e^{-\frac{\tau}{\beta}}\B w^0)\nu,\dot{u}(\tau)-\dot z(\tau))_{H^N}\right]\de \tau\\
   &=\int_0^t\left[(f(\tau), \dot u(\tau)-\dot z(\tau))+((\A +\B) e u(\tau)-\B w(\tau),e \dot z(\tau))\right]\de \tau\\
     &\hspace{0.9cm}+\int_0^t\left[(F(\tau)-\e^{-\frac{\tau}{\beta}}\B w^0,e\dot{u}(\tau)-e\dot{z}(\tau))-(\dot{u}(\tau),\ddot{z}(\tau))\right]\de \tau+(\dot{u}(t),\dot{z}(t)) -(u^1,\dot{z}(0))\\
    &=\int_0^t\left[(f(\tau), \dot u(\tau)-\dot z(\tau))+((\A +\B) e u(\tau)-\B w(\tau),e \dot z(\tau))+\e^{-\frac{\tau}{\beta}}(\B w^0,e\dot{z}(\tau))\right]\de \tau\\
    &\hspace{0.9cm}-\int_0^t(\dot{u}(\tau),\ddot{z}(\tau))\de \tau+(\dot{u}(t),\dot{z}(t)) -(u^1,\dot{z}(0))\\
    &\hspace{0.9cm}-\int_0^t(\dot{F}(\tau),eu(\tau)-ez(\tau))\de \tau+(F(t),eu(t)-ez(t)) -(F(0),eu^0-ez(0))\\
    &\hspace{0.9cm}-\int_0^t\frac{1}{\beta}\e^{-\frac{\tau}{\beta}}(\B w^0,eu(\tau))\de \tau+(\B w^0,eu^0) -\e^{-\frac{t}{\beta}}(\B w^0,eu(t)).
\end{align*}
Therefore, the definition of total work given in~\eqref{ttwork} is coherent with the classical one~\eqref{totalwork2M}. 
\end{remark}

Now we are in position to prove the energy-dissipation inequality before mentioned. For convenience of notation we set $h(t):=\e^{-\frac{t}{\beta}}\B w^0$.

\begin{theorem}\label{energyy}
The weak solution $(u,w)\in\V\times H^1(0,T;H^d_s)$ to the coupled system \eqref{cs}--\eqref{csi}, given by Lemma \ref{lem:convM}, satisfies for every $t\in[0, T]$ the following energy-dissipation inequality 
\begin{equation}\label{eq:eninM}
\En_{u,w}(t)+\Dis_{u,w}(t)\leq \En_{u,w}(0)+\W_{tot}(t),   
\end{equation}
where $\En_{u,w}$, $\Dis_{u,w}$, and $\W_{tot}$ are defined in \eqref{energy-mecc}, \eqref{dissi}, and \eqref{ttwork}, respectively.
\end{theorem}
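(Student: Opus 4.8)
The plan is to obtain \eqref{eq:eninM} by passing to the limit, pointwise in time, in the discrete energy inequality \eqref{grwM} established inside the proof of Lemma~\ref{lem:estM}. Fix $t\in(0,T]$ (the case $t=0$ is trivial, since $\W_{tot}(0)=\Dis_{u,w}(0)=0$) and set $i_n:=\lfloor t/\tau_n\rfloor$, so that $i_n\in\{1,\dots,n\}$ for $n$ large and $i_n\tau_n\to t$. Expanding the bilinear forms $a$ and $b$, inequality \eqref{grwM} with $i=i_n$ reads
\begin{equation*}
\tfrac12\|\delta u_n^{i_n}\|^2+\tfrac12(\A eu_n^{i_n},eu_n^{i_n})+\tfrac12(\B(eu_n^{i_n}-w_n^{i_n}),eu_n^{i_n}-w_n^{i_n})+\beta\sum_{k=1}^{i_n}\tau_n(\B\delta w_n^k,\delta w_n^k)\le \En_0+\sum_{k=1}^{i_n}\tau_n W_n^k,
\end{equation*}
and since $u(0)=u^0$, $\dot u(0)=u^1$, $w(0)=w^0$ one has $\En_0=\En_{u,w}(0)$. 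Hence it suffices to prove that the left-hand side has $\liminf_n\ge\En_{u,w}(t)+\Dis_{u,w}(t)$ and that the right-hand side converges to $\En_{u,w}(0)+\W_{tot}(t)$.

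For the left-hand side I would first record the pointwise-in-time weak limits $eu_n^{i_n}\rightharpoonup eu(t)$, $w_n^{i_n}\rightharpoonup w(t)$ in $H^d_s$ and $\delta u_n^{i_n}\rightharpoonup\dot u(t)$ in $H$. These follow by combining: (a) the uniform bounds of Lemma~\ref{lem:estM}, which make $\{u_n^{i_n}\}_n$ bounded in $V$, $\{\delta u_n^{i_n}\}_n$ bounded in $H$ and $\{w_n^{i_n}\}_n$ bounded in $H^d_s$; (b) the equicontinuity of the affine interpolants $u_n$, $\tilde u_n$, $w_n$ in the weaker spaces $H$, $(V^D_0)'$, $H^d_s$, coming from their boundedness in $H^1(0,T;H)$, in $H^1(0,T;(V^D_0)')$ (see \eqref{dev3}) and in $H^1(0,T;H^d_s)$, so that $\|u_n(i_n\tau_n)-u_n(t)\|_H$, $\|\tilde u_n(i_n\tau_n)-\tilde u_n(t)\|_{(V^D_0)'}$, $\|w_n(i_n\tau_n)-w_n(t)\|_{H^d_s}$ all tend to $0$; and (c) the pointwise weak limits $u_n(s)\rightharpoonup u(s)$ in $H$ and $\tilde u_n(s)\rightharpoonup\dot u(s)$ in $(V^D_0)'$ from Proposition~\ref{prop-main}, together with $w_n(s)\rightharpoonup w(s)$ in $H^d_s$ (which follows by testing $\dot w_n\rightharpoonup\dot w$ in $L^2(0,T;H^d_s)$ against $\mathbbm{1}_{[0,s]}\xi$). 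A routine subsequence argument upgrades these to convergence along the full sequence. Since $\A$ and $\B$ are symmetric and, being coercive, in particular nonnegative, the maps $\zeta\mapsto(\A\zeta,\zeta)$ and $(\eta,\zeta)\mapsto(\B(\eta-\zeta),\eta-\zeta)$ are convex and strongly continuous, hence weakly lower semicontinuous; combined with weak lower semicontinuity of $\|\cdot\|$ on $H$ and superadditivity of $\liminf$, this gives $\liminf_n$ of the first three terms $\ge\En_{u,w}(t)$. For the dissipation term, write $\beta\sum_{k=1}^{i_n}\tau_n(\B\delta w_n^k,\delta w_n^k)=\beta\int_0^{i_n\tau_n}(\B\dot w_n,\dot w_n)$; for fixed $\varepsilon>0$ and $n$ large this is $\ge\beta\int_0^{t-\varepsilon}(\B\dot w_n,\dot w_n)$, and since $v\mapsto\int_0^{t-\varepsilon}(\B v,v)$ is convex and continuous on $L^2(0,t-\varepsilon;H^d_s)$ while $\dot w_n\rightharpoonup\dot w$ there, $\liminf_n\ge\beta\int_0^{t-\varepsilon}(\B\dot w,\dot w)$; letting $\varepsilon\to0^+$ yields $\ge\Dis_{u,w}(t)$.

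For the right-hand side I would rewrite $\sum_{k=1}^{i_n}\tau_n W_n^k$ using the discrete integrations by parts \eqref{dis-part}, \eqref{dis-part2}, \eqref{dis-part2.1} to move the discrete derivatives off $\delta^2 u_n^k$ and $e\delta u_n^k$. This produces: boundary terms evaluated at the index $i_n$, namely $(\delta u_n^{i_n},\delta z_n^{i_n})$, $(F_n^{i_n},eu_n^{i_n})$ and $(h_n^{i_n},eu_n^{i_n})$, which converge respectively to $(\dot u(t),\dot z(t))$, $(F(t),eu(t))$ and $(h(t),eu(t))$ by weak--strong pairing (the second factors converging weakly by the previous step, while $\delta z_n^{i_n}\to\dot z(t)$ in $H$, $F_n^{i_n}\to F(t)$ and $h_n^{i_n}\to h(t)$ in $H^d_s$ because $z\in H^2(0,T;H)$, $F\in H^1(0,T;H^d_s)$ and $i_n\tau_n\to t$); the constant boundary terms at the index $0$, equal to $(u^1,\dot z(0))$, $(F(0),eu^0)$, $(\B w^0,eu^0)$; and integral remainders which, once expressed through the relevant piecewise-constant interpolants, converge by weak--strong pairing in $L^2$ thanks to \eqref{pf1M}--\eqref{pf2M} and the strong $L^2$-convergences of the interpolants of $f$, $F$, $h$, $\dot z$, $\ddot z$, $\dot F$ and of $\dot h=-\tfrac1\beta h$ to $\int_0^t(f,\dot u-\dot z)$, $\int_0^t(\dot u,\ddot z)$, $\int_0^t(\dot F,eu-ez)$, $\int_0^t((\A+\B)eu-\B w,e\dot z)$, $\int_0^t\e^{-\tau/\beta}(\B w^0,e\dot z)$ and $\int_0^t\tfrac1\beta\e^{-\tau/\beta}(\B w^0,eu)$; the difference between integrating up to $i_n\tau_n$ and up to $t$ is $O(|i_n\tau_n-t|^{1/2})$ by the uniform $L^\infty$-in-time bounds of Lemma~\ref{lem:estM}. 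Matching each contribution against the definition \eqref{ttwork} of $\W_{tot}$ gives $\lim_n\sum_{k=1}^{i_n}\tau_n W_n^k=\W_{tot}(t)$, and combining this with the previous step and $\En_0=\En_{u,w}(0)$ yields \eqref{eq:eninM}.

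The main obstacle is precisely the passage from the purely discrete estimate \eqref{grwM}, which only controls the grid values $u_n^i$, $\delta u_n^i$, $w_n^i$, to a genuinely pointwise-in-time inequality at an arbitrary $t$: one must carefully set up the pointwise weak convergences above and then invoke weak lower semicontinuity of the energy and dissipation, which are only \emph{degenerate} quadratic functionals (coercive in the $\A$-variable but merely nonnegative in the $\B$-part), so the argument must rely on convexity rather than on a Hilbert-norm splitting. A secondary technical nuisance is the systematic control of the numerous summation-by-parts remainders near the right endpoint, where $i_n\tau_n\neq t$.
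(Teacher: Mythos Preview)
Your proposal is correct and follows essentially the same route as the paper: pass to the limit in the discrete energy inequality \eqref{grwM}, using the pointwise weak convergences obtained from the uniform bounds of Lemma~\ref{lem:estM} together with the $H^1(0,T;(V_0^D)')$ bound \eqref{dev3}, apply weak lower semicontinuity of the (convex, nonnegative) quadratic forms on the left, and handle the work terms via the discrete integrations by parts \eqref{dis-part}--\eqref{dis-part2.1} and weak--strong pairing. The only noteworthy difference is that the paper takes $j=\lceil t/\tau_n\rceil$ so that $t_n=j\tau_n\ge t$, which makes the dissipation lower bound immediate (simply $\int_0^{t_n}\ge\int_0^t$) and avoids your $\varepsilon$-argument; also, a couple of your boundary terms are slightly misstated---for instance the $F$-boundary term should read $(F_n^{i_n},eu_n^{i_n}-ez_n^{i_n})$ rather than $(F_n^{i_n},eu_n^{i_n})$---but these are cosmetic and do not affect the argument.
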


\begin{proof}
Fixed $t\in (0, T]$, for every $n\in\N$ there exists a unique $j\in\{1,\dots,n\}$ such that $t\in((j-1)\tau_n,j\tau_n]$. In particular, denoting by $\lceil x\rceil$ the superior integer part of the number $x$, it reads as
$$j(n)=\left\lceil \frac{t}{\tau_n}\right\rceil.$$
After setting $t_n:=j\tau_n$, we can rewrite \eqref{grwM} as follows
\begin{align}\label{qfM}
\frac{1}{2}\| \tilde{u}_n^+(t)\|^2+\frac{1}{2}(\A  eu_n^+(t),eu_n^+(t))+\frac{1}{2}(\B(eu_n^+(t)&-w_n^+(t)),eu_n^+(t)-w_n^+(t))\nonumber\\
&+\beta\int_0^{t_n} (\B\dot {w}_n(\tau),\dot {w}_n(\tau))\de \tau\leq \En_{u,w}(0)+\W_n^+(t),
\end{align}
where
\begin{align*}
\W^+_n(t):=\int_0^{t_n}[(f_n^+(&\tau),\tilde u_n^+(\tau)-\tilde z_n^+(\tau))+(F_n^+(\tau),e\tilde u_n^+(\tau)-e\tilde z_n^+(\tau))+(\dot{\tilde u}_n(\tau),{\tilde z}^+_n(\tau))]\de \tau\\   
&+\int_0^{t_n}\left[((\A +\B)  eu_n^+(\tau)-\B w_n^+(\tau),e\tilde z_n^+(\tau))-(h_n^+(\tau),e\tilde u_n^+(\tau)-e\tilde z_n^+(\tau))\right]\de \tau.
\end{align*}
Thanks to~\eqref{eq:estM} and~\eqref{dev3}, we have
\begin{align*}
\|w_n(t)-w_n^+(t)\|^2&=\|w_n^j+(t-j\tau_n)\delta w_n^j-w_n^j\|^2\leq \tau^2_n\|\delta w_n^j\|^2\leq C\tau_n\xrightarrow[n\to \infty]{}0,\\
\|u_n(t)-u_n^+(t)\|&=\|u_n^j+(t-j\tau_n)\delta u_n^j-u_n^j\|\leq \tau_n\|\delta u_n^j\|\leq C\tau_n\xrightarrow[n\to \infty]{}0,\\
\|\tilde{u}_n(t)-\tilde{u}_n^+(t)\|^2_{(V_0^D)'}&=\|\delta u_n^j+(t-j\tau_n)\delta^2 u_n^j-\delta u_n^j\|^2_{(V_0^D)'}\leq \tau^2_n\|\delta^2 u_n^j\|^2_{(V_0^D)'}\leq \tilde C\tau_n\xrightarrow[n\to \infty]{}0.
\end{align*}
The last convergences and~\eqref{prog} imply 
\begin{equation*}
u_n^+(t)\xrightharpoonup[n\to\infty]{H}u(t),\qquad w_n^+(t)\xrightharpoonup[n\to\infty]{H^d_s}w(t), \qquad \tilde{u}_n^+(t)\xrightharpoonup[n\to\infty]{(V_0^D)'}\dot{u}(t),
\end{equation*}
and since $\|u_n^+(t)\|_{V}+\|\tilde{u}_n^+(t)\|\leq C$ for every $n\in\N$, we get
\begin{align}\label{newconvM}
u_n^+(t)\xrightharpoonup[n\to \infty]{V}u(t),\qquad w_n^+(t)\xrightharpoonup[n\to\infty]{H^d_s}w(t), \qquad \tilde{u}_n^+(t)\xrightharpoonup[n\to \infty]{H}\dot{u}(t).
\end{align}
By \eqref{newconvM} and the lower semicontinuity property of the maps $v\mapsto\norm{v}^2$, $v\mapsto (\A   v, v)$, and $  v\mapsto (\mathbb{B}v, v)$, we conclude 
\begin{align}
\| \dot{u}(t)\|^2&\leq\liminf_{n\to\infty}\| \tilde{u}_n^+(t)\|^2,\label{se1M}\\
(\A e u(t),e u(t)) &\leq \liminf_{n\to\infty}(\A  e u_n^+(t),e u_n^+(t)),\\
(\mathbb{B}(e u(t)-w(t)),e u(t)-w(t)) &\leq \liminf_{n\to\infty}(\mathbb B( e u_n^+(t)-w^+_n(t)),e u_n^+(t)-w^+_n(t)).
\end{align}
Moreover, from Lemma \ref{lem:convM}, and in particular by \eqref{pf2M} we get
\begin{equation}
    \int_0^t (\mathbb B\dot w(\tau),\dot w(\tau))\de \tau\leq\liminf_{n\to \infty}\int_0^t (\B \dot{w}_n(\tau),\dot{w}_n(\tau))\de \tau\leq\liminf_{n\to \infty}\int_0^{t_n}  (\B \dot{w}_n(\tau),\dot{w}_n(\tau))\de \tau,
\end{equation}
since $t\leq t_n$ and $v\mapsto\int_0^t(\B v(\tau),v(\tau))\de \tau$ is a non negative quadratic form on $L^2(0,T;H^d_s)$. 

Now, we study the right-hand side of \eqref{qfM}. Since we have
\begin{equation*}
\chi_{[0,t_n]}f^+_n\xrightarrow[n\to\infty]{L^2(0, T;H)}\chi_{[0,t]}f\quad\text{and}\quad \tilde{u}^+_n-\tilde{z}^+_n\xrightharpoonup[n\to\infty]{L^2(0, T;H)}\dot{u}-\dot z,
\end{equation*}
we deduce that
\begin{align}
 \int_0^{t_n}(f^+_n(\tau),\tilde{u}^+_n(\tau)-\tilde{z}^+_n(\tau))\de \tau \xrightarrow[n\to\infty]{} \int_0^t(f(\tau),\dot{u}(\tau)-\dot z(\tau))\de \tau .
\end{align}
In a similar way, since the following convergences hold
\begin{equation*}
\chi_{[0,t_n]}e\tilde{z}^+_n\xrightarrow[n\to\infty]{L^2(0,T;H^d_s)}\chi_{[0,t]}e\dot{z},\qquad h_n^+\xrightarrow[n\to\infty]{L^2(0,T;H^d_s)}h, \qquad (\A +\B) e u^+_n-  \B w^+_n\xrightharpoonup[n\to\infty]{L^2(0,T;H^d_s)} (\A +\B) e u- \B w,
\end{equation*}
we obtain
\begin{align}
\int_0^{t_n}(h_n^+(\tau),e\tilde z_n^+(\tau))\de \tau &\xrightarrow[n\to\infty]{}  \int_0^t (h(\tau),e\dot z(\tau))\de\tau \\
\int_0^{t_n}((\A +\B)  e u^+_n(\tau)-\B w_n^+(\tau),e\tilde{ z}^+_n(\tau))\de \tau  &\xrightarrow[n\to\infty]{} \int_0^{t}((\A +\B)  e u(\tau)- \B w(\tau),e\dot{ z}(\tau))\de \tau.
\end{align} 
By means of the discrete integration by parts formulas \eqref{dis-part}--\eqref{dis-part2.1} we can write
\begin{align}
     \int_0^{t_n}(\dot{\tilde u}_n(\tau),\tilde z_n^+(\tau))\de \tau &= (\tilde u_n^+(t),\tilde z_n^+(t)) -(u^1,\dot z(0))- \int_0^{t_n}(\tilde u_n^-(\tau),\dot{\tilde z}_n(\tau))\de \tau ,\label{n1}\\
    \int_0^{t_n}(h_n^+(\tau),e\tilde u_n^+(\tau))\de \tau &= (eu_n^+(t),h_n^+(t)) -(eu^0,h(0))- \int_0^{t_n}(\tilde h_n^+(\tau),e u_n^-(\tau))\de \tau ,\\
    \int_0^{t_n}(F_n^+(\tau),e\tilde u_n^+(\tau)-e\tilde z_n^+(\tau))\de \tau& = (F_n^+(t),eu_n^+(t)- ez_n^+(t)) -( F(0),eu^0- ez(0))\nonumber\\
    &\hspace{2.7cm}- \int_0^{t_n}(\tilde F_n^+(\tau),eu_n^-(\tau)- ez_n^-(\tau))\de \tau\label{n2}.
\end{align}
Notice that the following convergences hold
\begin{align*}
\|\tilde{z}_n^+(t)-\dot{z}(t)\|&=\Big\|\frac{z(j\tau_n)-z((j-1)\tau_n)}{\tau_n}-\dot{z}(t)\Big\|\leq \int_{(j-1)\tau_n}^{j\tau_n}\|\dot{z}(\tau)-\dot{z}(t)\|\de \tau\xrightarrow[n \to \infty]{}0,\\    
\|h_n^+(t)-h(t)\|&=\|\B w^0\||\e^{-\frac{j\tau_n}{\beta}}-\e^{-\frac{t}{\beta}}|\leq \frac{1}{\beta^2}\|\B w^0\||t-j\tau_n|\leq \frac{1}{\beta^2}\|\B w^0\|\tau_n \xrightarrow[n \to \infty]{}0,\\
\|z_n^+(t)-z(t)\|_{V}&=\|z(j\tau_n)-z(t)\|_{V}\leq (j\tau_n-t)^{\frac{1}{2}}\|\dot z\|_{L^2(0,T;V)}\leq \tau_n^{\frac{1}{2}}\|\dot z\|_{L^2(0,T;V)}\xrightarrow[n \to \infty]{}0,\\
\|F_n^+(t)-F(t)\|&=\|F(j\tau_n)-F(t)\|\leq (j\tau_n-t)^{\frac{1}{2}}\|\dot F\|_{L^2(0,T;H^d_s)}\leq \tau_n^{\frac{1}{2}}\|\dot F\|_{L^2(0,T;H^d_s)}\xrightarrow[n \to \infty]{}0,
\end{align*}
\begin{align*}
     \chi_{[0,t_n]}\dot{\tilde z}_n&\xrightarrow[n\to\infty]{L^2(0,T;H)}\chi_{[0,t]}\ddot{z},\quad \chi_{[0,t_n]}\tilde h_n^+\xrightarrow[n\to\infty]{L^2(0,T;H^d_s)}\chi_{[0,t]}\dot{h},\nonumber \\
z_n^{-}&\xrightarrow[n\to\infty]{L^2(0,T;V)}z,\quad\hspace{0.75cm}\chi_{[0,t_n]}{\tilde F}^+_n\xrightarrow[n\to\infty]{L^2(0,T;H^d_s)}\chi_{[0,t]}\dot F.
\end{align*}
By means of these convergences, \eqref{newconvM}, and Lemma \ref{lem:convM}, we can argue as before to deduce from \eqref{n1}--\eqref{n2} 
\begin{align}
  \int_0^{t_n}(\dot{\tilde u}_n(\tau),\tilde z_n^+(\tau))\de \tau &\xrightarrow[n\to \infty]{} (\dot u(t),\dot z(t)) -(u^1,\dot z(0))- \int_0^{t}(\dot u(\tau),{\ddot z}(\tau))\de \tau ,\\
 \int_0^{t_n}(h_n^+(\tau),e\tilde u_n^+(\tau))\de \tau &\xrightarrow[n\to \infty]{} (h(t),e u(t)) -(h(0),eu^0)- \int_0^{t}(\dot h(\tau),eu(\tau))\de \tau ,\\
  \int_0^{t_n}(F_n^+(\tau),e\tilde u_n^+(\tau)-e\tilde z_n^+(\tau))\de \tau& \xrightarrow[n\to \infty]{}(F(t),eu(t)- ez(t)) -(F(0),eu^0- ez(0))\nonumber\\
  &\hspace{2.9cm}- \int_0^{t}(\dot F(\tau),eu(\tau)- ez(\tau))\de \tau.\label{se-ultima}
\end{align}

By combining \eqref{qfM} and \eqref{se1M}--\eqref{se-ultima} we obtain the energy-dissipation inequality \eqref{eq:eninM} for $t\in(0,T]$. Finally, for $t=0$ the inequality trivially holds since $u(0)=u^0$ and $\dot u(0)=u^1$. 
\end{proof}

\begin{remark}
Thanks to the last theorem and to the equivalence between the viscoelastic dynamic system \eqref{classic_model_inf3}--\eqref{sys_inf3} and the coupled system \eqref{cs}--\eqref{csi}, we can derive an energy-dissipation inequality for a weak solution to our viscoelastic dynamic system \eqref{classic_model_inf3}--\eqref{sys_inf3}. As can be seen from \eqref{wweak} and the proof of Theorem \ref{equivalence} it is not restrictive to assume $w^0=0$.

Let $(u,w)$ be the weak solution to the coupled system \eqref{cs}--\eqref{csi} provided by Lemma \ref{lem:convM}. Then, it satisfies the energy-dissipation inequality \eqref{eq:eninM}. Moreover, from Theorem \ref{equivalence} the function $u$ is a solution to the viscoelastic dynamic system \eqref{classic_model_inf3}--\eqref{sys_inf3} in the sense of Definition \ref{newdef2}. Therefore, by substituting \eqref{sol-ode} in \eqref{eq:eninM} we get for the conservative part
\begin{align}\label{1}
    \En_{u,w}(t)&=\frac{1}{2}\|\dot u(t)\|^2 + \frac{1}{2}(\A   eu(t),eu(t)) + \frac{1}{2}(\B(eu(t)- w(t)),eu(t)-w(t))\nonumber\\
    &=\frac{1}{2}\|\dot u(t)\|^2 + \frac{1}{2}((\A +\B)  eu(t),eu(t))-\int_0^t \frac{1}{\beta} \e^{-\frac{t-\tau}{\beta}}(\B eu(\tau),eu(t))\de \tau\nonumber\\
    &\hspace{1.9cm}+\frac{1}{2\beta^2}\int_0^t\int_0^t\e^{-\frac{2t-r-\tau}{\beta}}(\B eu(r),eu(\tau))\de r \de \tau
\end{align}
and for the dissipation
\begin{align}\label{105}
    \Dis_{u,w}(t)&=\int_0^t(\B\dot w(\tau),eu(\tau)- w(\tau))\de \tau=\int_0^t(\B\dot w(\tau),eu(\tau))\de \tau-\int_0^t(\B\dot w(\tau),w(\tau))\de \tau\nonumber\\
   &=\frac{1}{\beta}\int_0^t\Big(\B eu(\tau)-\int_0^{\tau}\frac{1}{\beta}\e^{-\frac{\tau-r}{\beta}}\B e u(r)\de r,eu(\tau)\Big)\de \tau-\frac{1}{2}(\B w(t),w(t))\nonumber\\
   &=\frac{1}{\beta}\int_0^t(\B eu(\tau),eu(\tau))\de \tau-\frac{1}{\beta^2}\int_0^t\int_0^{\tau}\e^{-\frac{\tau-r}{\beta}}(\B e u(r),eu(\tau))\de r\de\tau\nonumber\\
   &\hspace{4.3cm}-\frac{1}{2\beta^2}\int_0^t\int_0^t\e^{-\frac{2t-r-\tau}{\beta}}(\B eu(r),eu(\tau))\de r\de \tau.
\end{align}
By substituting the same information in the total work, we obtain
\begin{align}\label{2}
  \W_{tot}(t)&=\int_0^t \left[(f(\tau),\dot u(\tau)-\dot z(\tau))+((\A +\B)  eu(\tau),e\dot z(\tau))-\int_0^{\tau}\frac{1}{\beta}\e^{-\frac{\tau-r}{\beta}}(\B eu(r),e\dot z(\tau))\de r\right]\de \tau\nonumber\\
  &\hspace{0.2cm}-\int_0^t(\dot F(\tau),eu(\tau)-ez(\tau))\de \tau+( F(t),eu(t)-ez(t))-( F(0),eu^0-ez(0))\nonumber\\
  &\hspace{0.2cm}-\int_0^t(\dot u(\tau),\ddot z(\tau))\de \tau+(\dot{u}(t),\dot{z}(t)) -(u^1,\dot{z}(0)).
\end{align}
After defining the elastic energy as
\begin{align*}
\En(t):=\frac{1}{2}\|\dot u(t)\|^2&+\frac{1}{2}((\A +\B)  eu(t),eu(t))\nonumber\\
    &-\int_0^t \frac{1}{\beta} \e^{-\frac{t-\tau}{\beta}}(\B eu(\tau),eu(t))\de \tau+\frac{1}{2\beta^2}\int_0^t\int_0^t\e^{-\frac{2t-r-\tau}{\beta}}(\B eu(r),eu(\tau))\de r\de \tau,
\end{align*}
and the dissipative term
\begin{align*}
    \Dis(t):=\frac{1}{\beta}\int_0^t(\B eu(\tau),eu(\tau))\de \tau&-\frac{1}{\beta^2}\int_0^t\int_0^{\tau}\e^{-\frac{\tau-r}{\beta}}(\B eu(r),eu(\tau))\de r\de \tau \nonumber\\
    &-\frac{1}{2\beta^2}\int_0^t\int_0^t\e^{-\frac{2t-r-\tau}{\beta}}(\B eu(r),eu(\tau))\de r\de \tau,
\end{align*}
taking into account \eqref{1}, \eqref{105}, and \eqref{2} we can rephrase the energy-dissipation inequality \eqref{eq:eninM} as
\begin{equation*}
    \En(t)+\Dis(t)\leq \En(0)+\W_{tot}(t),
\end{equation*}
where the total work $\W_{tot}$ now depends just on the function $u$.
\end{remark}

Finally, in view of Theorem \ref{energyy} we are ready to show that our weak solution satisfies the initial conditions in a stronger sense than the one stated in \eqref{ini*}, that is the content of the following lemma.

\begin{lemma}\label{deidat}
The weak solution $(u,w)\in\V\times H^1(0,T;H^d_s)$ to the coupled system \eqref{cs}--\eqref{csi}, provided by Lemma \ref{lem:convM}, satisfies the initial conditions in the following sense:
\begin{equation}\label{eq:inconM}
    \lim_{t\to 0^+}u(t)= u^0\text{ in $V$},\quad\lim_{t\to 0^+}\dot u(t)=u^1\text{ in $H$},\quad \lim_{t\to 0^+} w(t)=w^0\text{ in $H^d_s$}.
\end{equation}
\end{lemma}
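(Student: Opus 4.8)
The strategy is the classical one for proving attainment of initial data in the strong topology out of an energy inequality: combine the weak continuity already established (from Proposition \ref{prop-main}: $u\in C^0_w([0,T];V)$, $\dot u\in C^0_w([0,T];H)$, and $w\in C^0([0,T];H^d_s)$ since $w\in H^1(0,T;H^d_s)$) with the energy-dissipation inequality \eqref{eq:eninM} of Theorem \ref{energyy} to upgrade weak convergence to convergence of norms, which together give strong convergence in a Hilbert space. First I would record the consequences of weak continuity: as $t\to0^+$ we have $u(t)\xrightharpoonup{V}u^0$, $\dot u(t)\xrightharpoonup{H}u^1$, and $w(t)\to w^0$ in $H^d_s$ (the last one is genuine strong continuity, so that case is immediate and can be set aside). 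By weak lower semicontinuity of the quadratic forms $v\mapsto\|v\|^2$, $v\mapsto(\A v,v)$, $v\mapsto(\B v,v)$, I get
\begin{equation*}
\En_{u,w}(0)\le\liminf_{t\to0^+}\En_{u,w}(t).
\end{equation*}

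The heart of the argument is the reverse inequality $\limsup_{t\to0^+}\En_{u,w}(t)\le\En_{u,w}(0)$. For this I use \eqref{eq:eninM}: since $\Dis_{u,w}(t)=\beta\int_0^t(\B\dot w,\dot w)\,\de\tau\ge0$, we have $\En_{u,w}(t)\le\En_{u,w}(0)+\W_{tot}(t)$ for every $t\in[0,T]$. Then I would check that $\W_{tot}(t)\to0$ as $t\to0^+$: inspecting the definition \eqref{ttwork}, every term is either an integral $\int_0^t(\cdots)\,\de\tau$ of an $L^1$ (indeed $L^2\cdot L^2$) integrand — hence vanishing as $t\to0^+$ by absolute continuity of the Lebesgue integral — or a boundary-type term evaluated at $t$ minus the same term evaluated at $0$, such as $(\dot u(t),\dot z(t))-(u^1,\dot z(0))$, $(F(t),eu(t)-ez(t))-(F(0),eu^0-ez(0))$, $-\e^{-t/\beta}(\B w^0,eu(t))+(\B w^0,eu^0)$; each of these tends to $0$ using weak continuity of $u,\dot u$ together with strong continuity of $z,\dot z,F$ (from (D2), (D3) and $F\in H^1(0,T;H^d_s)$) and that $u(0)=u^0$, $\dot u(0)=u^1$ as established in Proposition \ref{prop-main}. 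Hence $\limsup_{t\to0^+}\En_{u,w}(t)\le\En_{u,w}(0)$, and combined with the liminf bound we obtain $\lim_{t\to0^+}\En_{u,w}(t)=\En_{u,w}(0)$.

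Finally I would split this limit of the total energy into its three nonnegative pieces. Writing $\En_{u,w}(t)=\tfrac12\|\dot u(t)\|^2+\tfrac12(\A eu(t),eu(t))+\tfrac12(\B(eu(t)-w(t)),eu(t)-w(t))$, each summand is weakly lower semicontinuous as $t\to0^+$; since their sum converges to the sum of the three limiting values, a standard argument (if $a_i(t)\to a_i$ fails as a limit for some $i$ while $\liminf a_i(t)\ge a_i$ for all $i$ and $\sum a_i(t)\to\sum a_i$, contradiction) forces each to converge:
\begin{equation*}
\|\dot u(t)\|\to\|u^1\|,\qquad (\A eu(t),eu(t))\to(\A eu^0,eu^0),\qquad (\B(eu(t)-w(t)),eu(t)-w(t))\to(\B(eu^0-w^0),eu^0-w^0).
\end{equation*}
Convergence of $\|\dot u(t)\|$ to $\|u^1\|$ together with $\dot u(t)\xrightharpoonup{H}u^1$ gives $\dot u(t)\to u^1$ in $H$. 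For $u$: from $(\A eu(t),eu(t))\to(\A eu^0,eu^0)$ and coercivity/boundedness of $\A$, $\A^{1/2}$ (or directly: the quadratic form $(\A\cdot,\cdot)$ induces an equivalent Hilbert norm on $H^d_s$) together with $eu(t)\xrightharpoonup{H^d_s}eu^0$, I get $eu(t)\to eu^0$ in $H^d_s$; combined with $u(t)\xrightharpoonup{H}u^0$ and the equivalent $V$-norm $\|u\|_V^2=\|u\|^2+\|eu\|^2$ — noting $\|u(t)\|\to\|u^0\|$ follows e.g. from $u\in C^0_w([0,T];V)\hookrightarrow C^0_w([0,T];H)$ plus... actually I would instead argue $u(t)\to u^0$ in $H$ using $u(0)=u^0$ and $\|u(t)-u^0\|\le\int_0^t\|\dot u(\tau)\|\,\de\tau\to0$ since $\dot u\in L^2(0,T;H)$ — I conclude $u(t)\to u^0$ in $V$. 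The main obstacle is the bookkeeping in showing $\W_{tot}(t)\to0$: one must carefully match each boundary term against its value at $0$ and invoke exactly the right continuity (weak for $u,\dot u$; strong for $z,\dot z,\ddot z,F,\dot F$) so that no term is left requiring strong convergence of $u(t)$ or $\dot u(t)$ — which would be circular. Once that is done, the rest is the routine "weak + norm convergence $\Rightarrow$ strong" mechanism in Hilbert spaces.
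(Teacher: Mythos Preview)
Your proposal is correct and follows essentially the same route as the paper: weak continuity plus lower semicontinuity for the $\liminf$ bound, the energy-dissipation inequality \eqref{eq:eninM} (dropping the nonnegative dissipation and using continuity of the right-hand side at $t=0$) for the $\limsup$ bound, then the standard splitting of a sum of nonnegative weakly lower semicontinuous terms whose total converges, and finally weak convergence plus convergence of norms in Hilbert space. Your write-up is in fact more explicit than the paper's in two places---the verification that $\W_{tot}(t)\to0$ term by term, and the argument that $u(t)\to u^0$ in $H$ via $\|u(t)-u^0\|\le\int_0^t\|\dot u\|$ (the paper simply invokes $u\in C^0([0,T];H)$, which follows from $u\in H^1(0,T;H)$)---but these are elaborations of the same argument, not a different approach.
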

\begin{proof}
Since $u\in C_w^0([0, T];V)$, $\dot u\in C_w^0([0, T];H)$, $w\in C^0([0,T];H^d_s)$, from the lower semicontinuity of the real valued functions 
$$t\mapsto \norm{\dot u(t)}^2,\quad t\mapsto (\A  eu(t),eu(t)),\quad t\mapsto (\B (eu(t)-w(t)),eu(t)-w(t)),$$
we can let $t\rightarrow0^+$ into the energy-dissipation inequality \eqref{eq:eninM} to deduce that
\begin{align}\label{star*}
    \En_{u,w}(0)&=\frac{1}{2}\norm{u^1}^2+\frac{1}{2}(\A   eu^0,eu^0)+\frac{1}{2}(\B(eu^0- w^0),eu^0-w^0)\nonumber\\
    &\le\frac{1}{2}\Big[\liminf_{t\to 0^+}\norm{\dot u(t)}^2+\liminf_{t\to 0^+}(\A   eu(t), eu(t))+\liminf_{t\to 0^+}(\B (eu(t)-w(t)),eu(t)-w(t))\Big]\nonumber\\
    &\leq \liminf_{t\to 0^+}\Big[\frac{1}{2}\norm{\dot u(t)}^2+\frac{1}{2}(\A   eu(t), eu(t))+\frac{1}{2}(\B(eu(t)- w(t)),eu(t)-w(t))\Big]\nonumber\\
    &=\liminf_{t\to 0^+}\En_{u,w}(t)\leq \limsup_{t\to 0^+}\En_{u,w}(t)\leq \En_{u,w}(0).
\end{align}
Notice that the last inequality in \eqref{star*} holds because the right-hand side of \eqref{eq:eninM} is continuous in $t$, and $u(0)=u^0$, $\dot u(0)=u^1$, and $w(0)=w^0$. Therefore, there exists $\lim_{t\to 0^+}\En_{u,w}(t)=\En_{u,w}(0)$. Moreover, we have
\begin{align*}
    \En_{u,w}(0)&\le\frac{1}{2}\liminf_{t\to 0^+}\norm{\dot u(t)}^2+\frac{1}{2}\liminf_{t\to 0^+}\Big[(\A   eu(t), eu(t))+(\B (eu(t)-w(t)),eu(t)-w(t))\Big]\nonumber\\
    &\leq \frac{1}{2}\limsup_{t\to 0^+}\norm{\dot u(t)}^2+\frac{1}{2}\liminf_{t\to 0^+}\Big[(\A   eu(t), eu(t))+(\B (eu(t)-w(t)),eu(t)-w(t))\Big]\nonumber\\
    &\leq\limsup_{t\to 0^+}\Big[\frac{1}{2}\norm{\dot u(t)}^2+\frac{1}{2}(\A   eu(t), eu(t))+\frac{1}{2}(\B(eu(t)- w(t)),eu(t)-w(t))\Big]=\En_{u,w}(0),
\end{align*}
which gives
\begin{equation*}
    \lim_{t\to 0^+}\norm{\dot u(t)}^2=\norm{u^1}^2.
\end{equation*}
In a similar way, we can also show that
\begin{equation*}
    \lim_{t\to 0^+}(\A   eu(t),eu(t))=(\A   eu^0,eu^0).
\end{equation*}

Finally, since we have 
$$\dot u(t)\xrightharpoonup[t\to 0^+]{H}u^1,\quad  eu(t)\xrightharpoonup[t\to 0^+]{H^d_s}eu^0$$ 
and $u\in C^0([0, T];H)$, we deduce \eqref{eq:inconM}. In particular the functions $u\colon [0, T]\to V$ and $\dot u\colon [0, T]\to H$ are continuous at $t=0$. 
\end{proof}

We can finally prove the main theorem of Section \ref{sub2}.

\begin{proof}[Proof of Theorem~\ref{main2}]
It is enough to combine Proposition~\ref{prop-main} and Lemma~\ref{deidat}.
\end{proof}

\begin{remark}
We have proved Theorem~\ref{main2} for the $d$-dimensional linear viscoelastic case, namely when the displacement $u$ is a vector-valued function. The same result is true with identical proof in the \emph{antiplane} case, that is when the displacement $u$ is a scalar function and satisfies~\eqref{viscoelasticM}.
\end{remark}

{\begin{acknowledgements}
The author wishes to thank Professors Gianni Dal Maso and Rodica Toader for having proposed the problem and for many helpful discussions on the topic. The author is a member of the {\em Gruppo Nazionale per l'Analisi Ma\-te\-ma\-ti\-ca, la Probabilit\`a e le loro Applicazioni} (GNAMPA) of the {\em Istituto Nazionale di Alta Matematica} (INdAM).
\end{acknowledgements}}

\end{document}